\theoremstyle{plain}
\newtheorem{theorem}{Theorem}
\newtheorem{lemma}[theorem]{Lemma}
\newtheorem{proposition}[theorem]{Proposition}
\theoremstyle{definition}
\newtheorem*{remark}{Remark}
\newtheorem*{lemma'}{Lemma 5'}
\newtheorem*{proposition'1}{Proposition 6'}
\newtheorem*{proposition'2}{Proposition 7'}
\newtheorem*{theorem'}{Theorem 8'}
\newcommand{\M}{\mathscr{M}}
\newcommand{\N}{\mathscr{N}}
\newcommand{\D}{\mathcal{D}}
\newcommand{\Ha}{\mathcal{H}}
\newcommand{\1}{\mathbb{1}}
\newcommand{\om}{\omega}
\newcommand{\ro}{\rho}
\newcommand{\f}{\varphi}
\newcommand{\tr}{\operatorname{tr}}
\newcommand{\s}{\operatorname{s}}
\renewcommand{\geq}{\geqslant}
\renewcommand{\leq}{\leqslant}
\DeclareMathOperator{\id}{id}
\begin{document}
\title[Relative modular operator]{Relative modular operator in semifinite von Neumann algebras and its use}
\author{Andrzej \L uczak, Hanna Pods\k{e}dkowska, Rafa{\l} Wieczorek}
\address{Faculty of Mathematics and Computer Science\\
         \L\'od\'z University\\
         ul. S. Banacha 22\\
         90-238 \L\'od\'z, Poland}

\email[Andrzej \L uczak]{andrzej.luczak@wmii.uni.lodz.pl}
\email[Hanna Pods\k{e}dkowska]{hanna.podsedkowska@wmii.uni.lodz.pl}
\email[Rafa{\l} Wieczorek]{rafal.wieczorek@wmii.uni.lodz.pl}
\subjclass[2010]{Primary: 46L53; Secondary: 81R15}
\date{}
\begin{abstract}
 We present some results concerning the relative modular operator in semifinite von Neumann algebras. These results allow one to prove some basic formula for trace, to obtain equivalence between Araki's relative entropy and Umegaki's information as well as to derive some formulae for quasi-entropies, and R\'enyi's relative entropy known in finite dimension.
\end{abstract}
\maketitle

\section*{Introduction}
In the paper, we investigate the relative modular operator in semifinite von Neumann algebras. In finite dimension, this operator is bounded and expressed in an easy way by means of the density operators. In infinite dimension, the relative modular operator is unbounded and its connection with the density operators which can, in general, be also unbounded, remained unclear. In Sections \ref{MO} and \ref{MO1}, this connection is established giving a compact formula for the relative modular operator in terms of the density operators. The main points of the analysis are presented in Section \ref{MO} where in order to avoid cumbersome technicalities a faithfulness assumption is made. This assumption is dropped in Section~ \ref{MO1} where taking advantage of the analysis performed in the previous section, the formula for the relative modular operator is obtained in full generality. This formula is then used in Section \ref{RE-I} to prove some basic formula for trace, to obtain equality between the relative entropy and the information as well as to derive some formulae for quasi-entropies, and R\'enyi's relative entropy generalising thus the results known in finite dimension to an arbitrary semifinite von Neumann algebra.

At this point one thing must be mentioned. In the general case of an arbitrary von Neumann algebra, it is possible and fruitful to represent this algebra on Haagerup's $L^2$-space, and consider the relative modular operator on this space (cf. \cite{Hi}). Then one obtains formulae which are counterparts of the formulae in this paper, in particular, the formula \eqref{basic1} is \emph{mutatis mutandis} the same as the formula (12.9) in \cite{Hi}. Those formulae are obtained by means of sophisticated techniques involving e.g. analytical continuation or the  `$2\times2$-matrix procedure' together with using the balanced weight. Our formulae are obtained in a much simpler way, but the main point for the presentation of their derivation lies in the following: the crucial formulae for the relative modular operator are (12') and \eqref{basic2} --- they allow one to obtain clear and compact forms of a number of relations in which this operator is involved. However, (12') and \eqref{basic2} require an analysis which can not be performed in Haagerup's $L^2$-space and which is one of the core results of the paper.

\section{Preliminaries and notation}
Let $\M$ be a semifinite von Neumann algebra with a normal semifinite faithful trace $\tau$, identity $\1$, and predual $\M_*$. The operator norm on $\M$ shall be denoted by $\|\cdot\|_\infty$. By $\M^+$ we shall denote the set of positive operators in $\M$, and by $\M_*^+$ --- the set of positive functionals in $\M_*$. These functionals will be sometimes referred to as (non-normalised) states.

The algebra of \emph{measurable operators} $\widetilde{\M}$ is defined as a topological ${}^*$-algebra of densely defined closed operators affiliated with $\M$ with strong addition $\dotplus$ and strong multiplication $\cdot$, i.e.
\[
 x\dotplus y=\overline{x+y},\qquad x\cdot y=\overline{xy},\qquad x,y\in\widetilde{\M},
\]
where $\overline{x+y}$ and $\overline{xy}$ are the closures of the corresponding operators defined by addition and composition, respectively, on the natural domains given by the intersections of the domains of $x$ and $y$ and of the range of $y$ and the domain of $x$, respectively (in what follows, while dealing with operators in $\widetilde{\M}$ we shall simply write $x+y$ instead of $x\dotplus y$, and $xy$ instead of $x\cdot y$). The translation-invariant measure topology is defined by a fundamental system of neighbourhoods of $0$, $\{N(\varepsilon,\delta): \varepsilon,\delta>0\}$, given by
\begin{align*}
 N(\varepsilon,\delta)=\{x\in\widetilde{\M}:&\text{ there exists a projection $p$ in $\M$ such that}\\ &xp\in\M,\quad\|xp\|_\infty\leq\varepsilon\quad\text{and} \quad\tau(p^\bot)\leq\delta\}.
\end{align*}
Thus for operators $x_n, x\in\widetilde{\M}$, the sequence $(x_n)$ converges to $x$ \emph{in measure} if for any $\varepsilon,\delta>0$ there exists $n_0$ such that for each $n\geq n_0$ there exists a projection $p_n\in\M$ such that
\[
 \tau(p_n^\bot)\leq\delta,\qquad (x_n-x)p_n\in\M,\qquad
 \text{and}\qquad\|(x_n-x)p_n\|_\infty\leq\varepsilon.
\]
The following `technical' form of convergence in measure proved in \cite[Proposition 2.7]{Y} is useful. Let
\[
 |x_n-x|=\int_0^{\infty}\lambda\,e_n(d\lambda)
\]
be the spectral decomposition of $|x_n-x|$ with spectral measure $e_n$ taking values in $\M$ since $x_n-x$, and thus $|x_n-x|$, are affiliated with $\M$. Then $x_n\to x$ \emph{in measure} if and only if for each $\varepsilon>0$
\[
 \tau(e_n([\varepsilon,\infty)))\to 0.
\]

A sequence $(x_n)$ of operators in $\widetilde{\M}$ is said to converge to $x\in\widetilde{\M}$ \emph{in Segal's sense} if for each $\varepsilon>0$ there is a projection $p\in\M$ such that $\tau(p^\bot)<\varepsilon$, $(x_n-x)p\in\M$ for sufficiently large $n$, and
\[
 \|(x_n-x)p\|_\infty\to0.
\]
It is clear that Segal's convergence implies convergence in measure. (A short intermezzo is probably in order here. The term \emph{Segal's convergence} was introduced by E.C. Lance in \cite{L} in honour of I. Segal, however, Segal himself did not consider this mode of convergence in \cite{S}, restricting attention to so called \emph{convergence nearly everywhere} which is weaker. For $\M$ finite, Segal's convergence and convergence nearly everywhere are equivalent as well as equivalent are convergences \emph{almost uniform} or \emph{closely on large sets} defined in \cite{B}. The notion of Segal's convergence leads in a natural way to the notion of \emph{Segal's continuity} which for `noncommutative stochastic processes' was considered in \cite{GL, GL1,Lu}.)

For each $\rho\in\M_*$, there is a measurable operator $h$ such that
\[
 \rho(x)=\tau(xh)=\tau(hx), \quad x\in\M.
\]
The space of all such operators is denoted by $L^1(\M,\tau)$, and the correspondence above is one-to-one and isometric, where the norm on $L^1(\M,\tau)$, denoted by $\|\cdot\|_1$, is defined as
\[
 \|h\|_1=\tau(|h|), \quad h\in L^1(\M,\tau).
\]
The space of all measurable operators $h$ such that $\tau(|h|^p)<+\infty$, $p\geq1$, constitutes a Banach space $L^p(\M,\tau)$ with the norm
\[
 \|h\|_p=\tau(|h|^p)^{\frac{1}{p}}.
\]
(In the theory of noncommutative $L^p$-spaces for semifinite von Neumann algebras, it it shown that $\tau$ can be extended to the $h$'s as above; see  e.g. \cite{N,S,Ta1,Ta2,Y} for a detailed account of this theory.) Moreover, to hermitian functionals in $\M_*$ correspond selfadjoint operators in $L^1(\M,\tau)$, and to states in $\M_*$ --- positive operators in $L^1(\M,\tau)$. For a state $\rho$, the corresponding element in $L^1(\M,\tau)^+$ will be denoted by $h_\rho$ and called the \emph{density} of $\rho$, thus
\[
 \rho(x)=\tau(xh_\rho)=\tau(h_\rho x)=\tau\big(h_\rho^{\frac{1}{2}}xh_\rho^{\frac{1}{2}}\big), \quad x\in\M.
\]
In particular,
\[
 \tau(h_\rho)=\rho(\1),
\]
so for the normalised states, we have for their densities the equality $\tau(h_\rho)=1$.

Let $r$ be such that $\frac{1}{p}+\frac{1}{q}=\frac{1}{r}$, and let $x\in L^p(\M,\tau)$, $y\in L^q(\M,\tau)$. Then $xy\in L^r(\M,\tau)$ and the following H\"{o}lder inequality holds
\[
 \|xy\|_r\leq\|x\|_p\|y\|_q.
\]

For an arbitrary $x\in L^p(\M,\tau)$, we have the spectral decomposition
\[
 |x|^p=\int_0^{\infty}\lambda^p\,e(d\lambda).
\]
Thus for any $\varepsilon>0$, we get
\[
 |x|^p\geq\int_{\varepsilon}^{\infty}\lambda^p\,e(d\lambda)\geq\int_{\varepsilon}^{\infty}
 \varepsilon^p\,e(d\lambda)=\varepsilon^p\,e([\varepsilon,\infty)).
\]
Consequently, we obtain the Chebyschev inequality
\[
 \tau(e([\varepsilon,\infty)))\leq\frac{\tau(|x|^p)}{\varepsilon^p}=\frac{\|x\|_p^p}{\varepsilon^p}.
\]
Taking into account the above-mentioned `technical' form of convergence in measure, we have
\begin{lemma}
If a sequence $(x_n)$ of operators in $L^p(\M,\tau)$ converges in $\|\cdot\|_p$-norm, then it converges in measure.
\end{lemma}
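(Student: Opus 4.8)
\emph{Proof proposal.} The plan is to reduce the statement entirely to the Chebyshev inequality established just above, combined with the `technical' characterisation of convergence in measure quoted from \cite[Proposition 2.7]{Y}. First I would fix notation: since $(x_n)$ converges in $\|\cdot\|_p$-norm and $L^p(\M,\tau)$ is a Banach space, its limit $x$ again belongs to $L^p(\M,\tau)$; hence $x_n-x\in L^p(\M,\tau)$ for every $n$, and $\|x_n-x\|_p\to0$. This is the only point that needs a word of justification, and it is immediate from completeness of $L^p(\M,\tau)$.

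Next, for each $n$ I would take the spectral decomposition
\[
 |x_n-x|=\int_0^{\infty}\lambda\,e_n(d\lambda),
\]
where $e_n$ is the spectral measure of $|x_n-x|$, taking values in $\M$ since $x_n-x$ is affiliated with $\M$. Applying the Chebyshev inequality derived above to the operator $x_n-x\in L^p(\M,\tau)$ yields, for every $\varepsilon>0$,
\[
 \tau\big(e_n([\varepsilon,\infty))\big)\leq\frac{\|x_n-x\|_p^p}{\varepsilon^p}.
\]
For each fixed $\varepsilon>0$ the right-hand side tends to $0$ as $n\to\infty$, so $\tau\big(e_n([\varepsilon,\infty))\big)\to0$ for every $\varepsilon>0$.

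Finally, by the `technical' form of convergence in measure recalled above (that is, \cite[Proposition 2.7]{Y}), the condition $\tau\big(e_n([\varepsilon,\infty))\big)\to0$ for all $\varepsilon>0$ is precisely equivalent to $x_n\to x$ in measure, which completes the argument. I do not expect any genuine obstacle here; the proof is a direct chain of implications, and the only mild subtlety is ensuring that the $L^p$-limit $x$ lies in $L^p(\M,\tau)$ so that the Chebyshev inequality is legitimately applied to the differences $x_n-x$.
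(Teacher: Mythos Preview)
Your proposal is correct and follows exactly the route the paper intends: the paper states the lemma immediately after deriving the Chebyshev inequality and recalling the `technical' characterisation of convergence in measure from \cite[Proposition~2.7]{Y}, presenting it as a direct consequence of these two facts without further argument. Your write-up simply fills in the obvious steps, and there is nothing more to add.
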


For a linear operator $T$ on a Hilbert space $\Ha$, $\D(T)$ will stand for its domain and $\Gamma(T)$ for its graph. We start with a simple fact.
\begin{lemma}\label{clop}
Let $T$ be a closable operator, and let $U$ be unitary. Then
\[
 \overline{UT}=U\overline{T}.
\]
\end{lemma}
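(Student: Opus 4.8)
The plan is to argue at the level of graphs. On $\Ha\oplus\Ha$ consider the map $V(\xi,\eta)=(\xi,U\eta)$; since $\|V(\xi,\eta)\|^2=\|\xi\|^2+\|U\eta\|^2=\|\xi\|^2+\|\eta\|^2$ and $V$ is onto (with inverse $(\xi,\eta)\mapsto(\xi,U^*\eta)$), $V$ is a unitary operator on $\Ha\oplus\Ha$, in particular a homeomorphism. The point of introducing $V$ is the identity $\Gamma(UT)=V\,\Gamma(T)$, which is immediate from $\Gamma(UT)=\{(\xi,UT\xi):\xi\in\D(T)\}$. Because $V$ is a homeomorphism it commutes with topological closure of subsets, so
\[
\overline{\Gamma(UT)}=\overline{V\,\Gamma(T)}=V\,\overline{\Gamma(T)}=V\,\Gamma(\overline{T}),
\]
the last equality being the hypothesis that $T$ is closable.

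Next I would identify $V\,\Gamma(\overline{T})$ as the graph of an operator. Since $\overline{T}$ is a closed operator, $\Gamma(\overline{T})$ contains no pair $(0,\eta)$ with $\eta\neq0$; applying $V$ and using injectivity of $U$, the set $V\,\Gamma(\overline{T})$ contains no such pair either, hence it is the graph of a single‑valued linear operator, namely $\xi\mapsto U\overline{T}\xi$ on $\D(\overline{T})$, i.e. $V\,\Gamma(\overline{T})=\Gamma(U\overline{T})$; and this graph is closed, being the image of the closed set $\Gamma(\overline{T})$ under the homeomorphism $V$. Combined with the display above, $\overline{\Gamma(UT)}=\Gamma(U\overline{T})$, which in particular shows $\overline{\Gamma(UT)}$ is a graph, so $UT$ is closable and $\overline{UT}=U\overline{T}$. (One can phrase the same computation sequentially: $\xi_n\to\xi$ with $UT\xi_n\to\zeta$ holds iff $\xi_n\to\xi$ with $T\xi_n\to U^*\zeta$, by continuity of $U$ and $U^*$, iff $\xi\in\D(\overline{T})$ and $\overline{T}\xi=U^*\zeta$.)

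The only delicate point — the "main obstacle," such as it is — is the bookkeeping guaranteeing that every object in sight is genuinely the graph of an operator and not merely a linear relation with a multivalued part; this is exactly where invertibility (indeed, injectivity suffices) of $U$ enters, to rule out $(0,\eta)$ with $\eta\neq0$ in $V\,\Gamma(\overline{T})$. Everything else reduces to the elementary fact that a homeomorphism of $\Ha\oplus\Ha$ commutes with closure.
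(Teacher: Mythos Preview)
Your argument is correct. It is, however, organised differently from the paper's proof, and the difference is worth noting.

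The paper proceeds in two stages: first it shows directly, by a sequential argument on $\Gamma(UT)$, that if $T$ is already closed then $UT$ is closed; then, for closable $T$, from $UT\subset U\overline{T}$ it deduces $\overline{UT}\subset U\overline{T}$, and obtains the reverse inclusion by applying the same estimate to $U^*(UT)$, namely $\overline{T}=\overline{U^*UT}\subset U^*\overline{UT}$. Your approach replaces this two-step bootstrap by a single structural observation: the unitary $V\colon(\xi,\eta)\mapsto(\xi,U\eta)$ on $\Ha\oplus\Ha$ carries $\Gamma(T)$ onto $\Gamma(UT)$, and since a homeomorphism commutes with closure, $\overline{\Gamma(UT)}=V\,\Gamma(\overline{T})=\Gamma(U\overline{T})$ in one stroke. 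What your route buys is a transparent explanation of \emph{why} the lemma holds (the graph transforms by an ambient isometry), and it incidentally shows that only boundedness and injectivity of $U$ are used, not full unitarity. What the paper's route buys is that it isolates the intermediate fact ``$T$ closed $\Rightarrow$ $UT$ closed'' explicitly, which is sometimes useful on its own; and its $U^*$-trick is the template later reused (with $J$ in place of $U$) in the proof of Proposition~\ref{D}.
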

\begin{proof}
First we show that if $U$ is unitary and $T$ is closed, then $UT$ is closed. To this end, let $\Gamma(UT)\ni(\xi_n,\eta_n)\to(\xi,\eta)$. We have\\ $\eta_n=UT\xi_n$, so
\[
 UT\xi_n=\eta_n\to\eta,
\]
hence
\[
 T\xi_n\to U^*\eta,
\]
which yields
\[
 \Gamma(T)\ni(\xi_n,T\xi_n)\to(\xi,U^*\eta).
\]
From the closedness of $T$, it follows that $\xi\in\Gamma(T)$ and $U^*\eta=T\xi$, hence $\eta=UT\xi$, i.e. $(\xi,\eta)\in\Gamma(UT)$.

Now let $T$ be closable. Since $UT\subset U\overline{T}$, we infer from the first part of the proof that
\[
 \overline{UT}\subset\overline{U\overline{T}}=U\overline{T}.
\]
Consequently, we have
\[
 \overline{T}=\overline{U^*UT}\subset U^*\overline{UT}\subset U^*U\overline{T}=\overline{T},
\]
which means that
\[
 \overline{T}=U^*\overline{UT},
\]
i.e.
\[
 U\overline{T}=\overline{UT}. \qedhere
\]
\end{proof}
Let $\tau$ be a normal semifinite faithful trace on $\M$. Its definition ideal $\mathfrak{M}$ is defined as a linear span of all positive elements in $\M$ of finite trace, so the elements in $\mathfrak{M}$ are exactly those that have finite trace. By definition, the semifiniteness of $\tau$ means that its definition ideal $\mathfrak{M}$ is $\sigma$-dense in $\M$. We are interested in the set $\M\cap L^2(\M,\tau)$.

Recall that the $\sigma$-strong* topology on $\M$ is given by the family of seminorms
\[
 \M\ni x\mapsto(\ro(x^*x)+\ro(xx^*))^{\frac{1}{2}}, \quad \ro\in\M_*^+.
\]

\begin{lemma}\label{sstrong}
Let $\tau$ be a normal semifinite faithful trace on $\M$. Then \linebreak $\M\cap L^2(\M,\tau)$ is $\sigma$-strong* dense in $\M$.
\end{lemma}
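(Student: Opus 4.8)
The plan is to approximate each $x\in\M$ by the operators $xe_\alpha$, where $(e_\alpha)$ is a suitable increasing net of finite-trace projections. First note that for a bounded operator $x$ one has $x\in L^2(\M,\tau)$ exactly when $\tau(x^*x)<\infty$, so $\M\cap L^2(\M,\tau)=\{x\in\M:\tau(x^*x)<\infty\}$; in particular $xe\in\M\cap L^2(\M,\tau)$ whenever $x\in\M$ and $e$ is a projection with $\tau(e)<\infty$, since
\[
 \tau\big((xe)^*(xe)\big)=\tau(ex^*xe)\leq\|x\|_\infty^2\,\tau(e)<\infty.
\]
Thus the heart of the matter is to produce an increasing net of projections $(e_\alpha)\subset\M$ with $\tau(e_\alpha)<\infty$ for every $\alpha$ and $e_\alpha\uparrow\1$ (strongly); this has to be extracted from the assumed $\sigma$-density of the definition ideal $\mathfrak{M}$.

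For this, I would take $\mathcal{P}$ to be the family of all finite-trace projections in $\M$, ordered by $\leq$. It is upward directed: for $e,f\in\mathcal{P}$ the parallelogram law gives the projection equivalence $e\vee f-f\sim e-e\wedge f\leq e$, and, $\tau$ being a trace, $\tau(e\vee f)=\tau(f)+\tau(e\vee f-f)\leq\tau(f)+\tau(e)<\infty$, so $e\vee f\in\mathcal{P}$. Put $p=\sup\mathcal{P}$. I claim $p=\1$. If not, then $\1-p\neq0$; since $\mathfrak{M}$ is a $\sigma$-weakly dense two-sided ideal, $(\1-p)\mathfrak{M}(\1-p)$ is $\sigma$-weakly dense in the corner $(\1-p)\M(\1-p)\neq\{0\}$, hence contains a non-zero $c$, and then $b:=c^*c$ is a non-zero element of $\mathfrak{M}^+$ whose support lies under $\1-p$. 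Choosing $\varepsilon>0$ small enough that $q:=\chi_{[\varepsilon,\infty)}(b)\neq0$, we have $\varepsilon q\leq b$, so $\tau(q)\leq\varepsilon^{-1}\tau(b)<\infty$, while $q\leq\1-p$ --- contradicting $q\leq p$. Hence $p=\1$, and $\mathcal{P}$ itself, regarded as a net, increases strongly to $\1$.

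It then remains to check that, with $(e_\alpha)=\mathcal{P}$, one has $xe_\alpha\to x$ $\sigma$-strong*. Fixing $\om\in\M_*^+$ and writing $xe_\alpha-x=-x(\1-e_\alpha)$, we get
\[
 \om\big((xe_\alpha-x)^*(xe_\alpha-x)\big)=\om\big((\1-e_\alpha)x^*x(\1-e_\alpha)\big)\leq\|x\|_\infty^2\,\om(\1-e_\alpha)\longrightarrow0
\]
because $\1-e_\alpha\downarrow0$ and $\om$ is normal, and similarly
\[
 \om\big((xe_\alpha-x)(xe_\alpha-x)^*\big)=\om\big(x(\1-e_\alpha)x^*\big)=\psi(\1-e_\alpha)\longrightarrow0,
\]
with $\psi(\cdot)=\om(x\,\cdot\,x^*)\in\M_*^+$ again normal. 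Consequently the $\sigma$-strong* seminorm of $xe_\alpha-x$ associated with $\om$ tends to $0$, which establishes the density. The step I expect to be the genuine obstacle is the middle one --- passing from the mere $\sigma$-density of $\mathfrak{M}$ to an increasing family of \emph{finite-trace projections} with supremum $\1$; the remaining verifications are elementary operator estimates combined with the order-continuity of normal functionals.
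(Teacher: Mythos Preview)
Your argument is correct, and it follows a genuinely different and more economical route than the paper's.

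The paper proceeds elementwise: it first decomposes an arbitrary projection $e\in\M$ as an orthogonal sum $\sum_i e_i$ of finite-trace subprojections (via a maximality argument, invoking \cite[Proposition 8.5.2]{KR}), then uses the spectral theorem to approximate each $x\in\M^+$ from below by finite positive combinations $\sum\lambda_i e'_i\in\mathfrak{M}^+$, takes square roots to land in $\M^+\cap L^2(\M,\tau)$, and finally handles general $x$ by the four-positive-part decomposition. Your approach instead builds a \emph{single} increasing net of finite-trace projections $(e_\alpha)$ with supremum $\1$ and approximates every $x$ at once by $xe_\alpha$; the observation $\tau(e_\alpha x^*xe_\alpha)\leq\|x\|_\infty^2\tau(e_\alpha)<\infty$ places $xe_\alpha$ directly in $\M\cap L^2(\M,\tau)$, and the two seminorm estimates for $\sigma$-strong$^*$ convergence follow from normality. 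This bypasses the spectral theorem, the square-root trick, and the decomposition into positive parts. The paper's method does yield the extra information that positive elements can be approximated from below by positive elements of $\M\cap L^2(\M,\tau)$, which is occasionally useful, but for the bare density statement your route is cleaner. The core step you flagged as the obstacle---producing enough finite-trace projections from the $\sigma$-density of $\mathfrak{M}$---is handled correctly: the Kaplansky parallelogram identity makes $\mathcal{P}$ directed, and the spectral-projection argument under $\1-p$ forces $\sup\mathcal{P}=\1$.
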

\begin{proof}
First we shall show that for an arbitrary projection $e\in\M$ we have
\[
 e=\sum_ie_i,
\]
where $e_i$ are pairwise orthogonal projections in $\M$ such that \linebreak $\tau(e_i)<\infty$. On account of \cite[Proposition 8.5.2]{KR}, there exists a nonzero projection $f\in\M$ and a positive number $\alpha$ such that $0<\tau(f)<\infty$ and $\alpha f\leq e$. It follows that $\alpha\leq1$ and $f\leq e$. Let $\mathfrak{F}=\{e_i\}$ be a maximal family of nonzero pairwise orthogonal projections such that $e_i\leq e$ and $\tau(e_i)<\infty$. From the maximality of $\mathfrak{F}$ and the faithfulness of $\tau$, it follows that $\displaystyle{\sum_ie_i=e}$.

Further, we have
\[
 e=\lim_{F\text{-finite}}e_F,
\]
where
\[
 e_F=\sum_{i\in F}e_i.
\]
$e_F$ are projections such that $\tau(e_F)<\infty$, and $e_F\to e$ \emph{strongly}, and since $e-e_F$ are projections, $e_F\to e$\; $\sigma$-\emph{strongly}*.

Let now $x\in\M^+$. From the spectral theorem, it follows that $x$ may be arbitrarily closely in norm (and hence in the $\sigma$-strong* topology) approximated from below by operators of the form $\displaystyle{\sum_{i=1}^m\lambda_ie_i}$, where $\lambda_i>0$ and $e_i$ are projections in $\M$. Taking projections $e'_i\leq e_i$ such that $\tau(e'_i)<\infty$ and $e'_i$ are arbitrarily close to $e_i$ in the $\sigma$-strong* topology, which is possible by the first part of the proof, we obtain that $\displaystyle{\sum_{i=1}^m\lambda_ie'_i}$ is arbitrarily close to $x$ in the $\sigma$-strong* topology, $\displaystyle{\sum_{i=1}^m\lambda_ie'_i}\leq x$, and
\[
 \tau\Big(\sum_{i=1}^m\lambda_ie'_i\Big)<\infty.
\]
This means that for each $x\in\M^+$ we can find a net $\{x_i\}\subset\mathfrak{M}^+$ such that $x_i\leq x$ and $x_i\to x$ $\sigma$-\emph{strongly}*.

Let again $x$ be an arbitrary element in $\M^+$. From what we have proved, there is a net $\{x_i\}\subset\mathfrak{M}^+$ such that $x_i\leq x^2$ and $x_i\to x^2$ $\sigma$-\emph{strongly}*. It follows that $x_i^{\frac{1}{2}}\in\M\cap L^2(\M,\tau)$, $x_i^{\frac{1}{2}}\leq x$, and the boundedness of $\big\{x_i^{\frac{1}{2}}\big\}$ yields  $x_i^{\frac{1}{2}}\to x$ \emph{strongly}. The boundedness and positivity of $x_i^{\frac{1}{2}}$ and $x$ give $x_i^{\frac{1}{2}}\to x$ $\sigma$-\emph{strongly}*. Thus we have proved that $\M^+\cap L^2(\M,\tau)$ is $\sigma$-strong* dense in $\M^+$, and the decomposition of an arbitrary $x\in\M$ as a linear combination of four positive elements yields the claim.
\end{proof}

\section{The fundamental representations of the algebra}\label{rep}
The following construction will be crucial in our further considerations. The space $L^2(\M,\tau)$ consists of (possibly unbounded) linear operators affiliated with $\M$ such that for
$a\in L^2(\M,\tau)$ we have $\tau(a^*a)=\tau(|a|^2)<\infty$. With a scalar product defined on $L^2(\M,\tau)$ by the formula
\begin{equation}\label{sp}
 \langle a|b\rangle_2=\tau(a^*b), \quad a,b\in L^2(\M,\tau),
\end{equation}
$L^2(\M,\tau)$ becomes a Hilbert space which we shall denote by $\Ha$. The operators $a\in L^2(\M,\tau)$, treated as elements of $\Ha$, shall be denoted by $\Lambda(a)$, thus formula \eqref{sp} reads
\[
 \langle a|b\rangle_2=\langle\Lambda(a)|\Lambda(b)\rangle_{\Ha}=\tau(a^*b), \quad a,b\in L^2(\M,\tau),
\]
(note that $a^*b$ denotes the product of operators in $L^2(\M,\tau)$). On the space $L^2(\M,\tau)$ we shall also consider a norm $\|\cdot\|_2$  defined as
\[
 \|a\|_2=\big(\tau(|a|^2)\big)^{\frac{1}{2}},
\]
so we have
\[
 \|a\|_2=\|\Lambda(a)\|_{\Ha}.
\]

We define a representation $\pi$ of $\M$ on $\Ha$ by the formula
\[
 \pi(x)\Lambda(a)=\Lambda(xa), \quad x\in\M,\,a\in L^2(\M,\tau),
\]
and an antirepresentation $\pi'$ of $\M$ on $\Ha$ by the formula
\[
 \pi'(x)\Lambda(a)=\Lambda(ax), \quad x\in\M,\,a\in L^2(\M,\tau).
\]
It is known that $\pi$ and $\pi'$ are normal faithful, and that $\pi(\M)'=\pi'(\M)$ (cf. \cite[Theorem V.2.22]{Ta1}). Let $x$ be a selfadjoint operator affiliated with $\M$ with spectral decomposition
\begin{equation}\label{sp1}
 x=\int_{-\infty}^\infty \lambda\,e(d\lambda).
\end{equation}
We define $\pi(x)$ and $\pi'(x)$ by the formulae
\[
 \pi(x)=\int_{-\infty}^\infty \lambda\,\pi(e(d\lambda)), \qquad \pi'(x)=\int_{-\infty}^\infty \lambda\,\pi'(e(d\lambda)),
\]
(since $\pi$ and $\pi'$ are normal, $\pi(e(\cdot))$ and $\pi'(e(\cdot))$ are spectral measures). It is clear that $\pi(x)$ and $\pi'(x)$ are selfadjoint operators affiliated with $\pi(\M)$ and $\pi'(\M)$, respectively. Let $f\colon\mathbb{R}\to\mathbb{R}$ be a Borel function. For selfadjoint $x$ with spectral decomposition \eqref{sp1}, we have, using `integration by image measure',
\begin{align*}
 \pi(f(x))&=\pi\bigg(\int_{-\infty}^\infty f(\lambda)\,e(d\lambda)\bigg)\\
 &=\pi\bigg(\int_{-\infty}^\infty t\,(f\circ e)(dt)\bigg)=\int_{-\infty}^\infty t\,\pi((f\circ e)(dt)),
\end{align*}
where $f\circ e$ is a spectral measure defined as
\[
 (f\circ e)(E)=e(f^{-1}(E)).
\]
On the other hand, by the same token
\[
 f(\pi(x))=\int_{-\infty}^\infty f(\lambda)\,\pi(e(d\lambda))=\int_{-\infty}^\infty t\,(f\circ(\pi(e(dt)))),
\]
where for the spectral measure $f\circ(\pi(e(\cdot)))$ we have
\[
 f\circ(\pi(e(E)))=\pi(e(f^{-1}(E))).
\]
It follows that $\pi(f(x))$ and $f(\pi(x))$ have the same spectral measures, yielding the equality
\begin{equation}\label{pi1}
 \pi(f(x))=f(\pi(x)).
\end{equation}
In the same way we obtain the equality
\begin{equation}\label{pi'1}
 \pi'(f(x))=f(\pi'(x)).
\end{equation}
Now we want to describe the action of $\pi(x)$ and $\pi'(x)$ for measurable $x$ as selfadjoint, possibly unbounded, operators on $\Ha$.
\begin{proposition}\label{P}
Let $x^*=x\in\widetilde{\M}$, and let $\Lambda(a)\in\D(\pi(x))$. Then\\ $xa\in L^2(\M,\tau)$, and
\[
 \pi(x)\Lambda(a)=\Lambda(xa).
\]
\end{proposition}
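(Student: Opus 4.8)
The plan is to realize $\pi(x)$ as the strong limit (on its natural domain) of the bounded operators $\pi(x_n)$, where $x_n=\int_{[-n,n]}\lambda\,e(d\lambda)$ are the spectral truncations of $x$, and to track what happens to $\Lambda(a)$ under this limit. First I would fix the spectral decomposition $x=\int_{-\infty}^\infty\lambda\,e(d\lambda)$ and set $p_n=e([-n,n])$, so that $x_n=xp_n=p_nx\in\M$ and $p_n\uparrow\1$ strongly. By \eqref{pi1}, $\pi(x_n)=\pi(x)\pi(p_n)$, and since $\pi(x)$ is a selfadjoint operator with spectral measure $\pi(e(\cdot))$, the standard description of the domain of an unbounded selfadjoint operator gives: $\Lambda(a)\in\D(\pi(x))$ if and only if $\int_{-\infty}^\infty\lambda^2\,\langle\pi(e(d\lambda))\Lambda(a)|\Lambda(a)\rangle<\infty$, and in that case $\pi(x_n)\Lambda(a)=\pi(x)\pi(p_n)\Lambda(a)\to\pi(x)\Lambda(a)$ in $\Ha$ as $n\to\infty$.

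Next I would compute the left-hand side concretely. Since $x_n\in\M$, the definition of $\pi$ gives $\pi(x_n)\Lambda(a)=\Lambda(x_na)=\Lambda(p_nxa)$ — here one must be a little careful that $p_nxa$, a priori defined via strong products in $\widetilde{\M}$, is genuinely in $L^2(\M,\tau)$; but $p_n\in\M$ and $x_na=(xp_n)a=x(p_na)$ with $p_na\in L^2(\M,\tau)$ and $x_n$ bounded, so $x_na\in L^2(\M,\tau)$ and equals $p_n\cdot(xa)$ whenever the latter makes sense. Thus $\Lambda(p_n(xa))=\pi(p_n)\Lambda(xa)$ once we know $xa\in L^2$. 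The point is therefore to identify $\lim_n\Lambda(x_na)$ with $\Lambda(xa)$ for an operator $xa$ that is only known a priori to be a closed densely defined operator affiliated with $\M$ (the strong product in $\widetilde{\M}$).

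The key step — and the main obstacle — is showing $xa\in L^2(\M,\tau)$, i.e. that $\tau(|xa|^2)<\infty$, and that $\Lambda(x_na)\to\Lambda(xa)$ in $\Ha$. I would argue as follows: the sequence $\Lambda(x_na)=\pi(x_n)\Lambda(a)$ is Cauchy in $\Ha$ (since it converges to $\pi(x)\Lambda(a)$), hence $(x_na)$ is Cauchy in the $\|\cdot\|_2$-norm, so it converges to some $b\in L^2(\M,\tau)$ with $\Lambda(x_na)\to\Lambda(b)=\pi(x)\Lambda(a)$; by Lemma~1 this convergence is also in measure. On the other hand $x_na=x(p_na)$ and $p_na\to a$ in measure (indeed $p_n\to\1$ $\sigma$-strongly$^*$, hence $p_na\to a$ in measure), so by joint continuity of the strong product $\cdot$ on $\widetilde{\M}$ in the measure topology, $x_na=x\cdot(p_na)\to x\cdot a=xa$ in measure. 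Since limits in measure are unique, $b=xa$, which shows simultaneously that $xa=b\in L^2(\M,\tau)$ and that $\pi(x)\Lambda(a)=\Lambda(b)=\Lambda(xa)$.

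Finally I would note that the only genuinely delicate points to verify carefully are (i) the identity $\pi(x_n)\Lambda(a)=\Lambda(x_na)$ together with the membership $x_na\in L^2(\M,\tau)$ — straightforward since $x_n\in\M$; (ii) the convergence $p_na\to a$ in measure, which follows from $p_n\to\1$ strongly and the `technical' characterization of convergence in measure recalled in Section~1 (the tails of the spectral measure of $|a-p_na|=|(\1-p_n)a|$ have trace tending to $0$); and (iii) the joint continuity of multiplication in $\widetilde{\M}$, which is part of the standard theory of measurable operators and may be quoted. Assembling these three ingredients yields the proposition.
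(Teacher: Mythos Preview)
Your argument is correct and follows essentially the same route as the paper: truncate $x$ spectrally, observe that $\Lambda(x_na)$ is Cauchy in $\Ha$ and hence converges in $L^2$ and in measure, then identify the measure limit with $xa$. The only tactical difference is in that last identification: the paper shows $x_{[n]}\to x$ in Segal's sense (using measurability of $x$ to control $\tau(e([-\alpha,\alpha])^\bot)$) and then multiplies by $a$, whereas you instead show $p_na\to a$ in measure (which is immediate since $\|(\1-p_n)a\|_2^2=\tau((\1-p_n)aa^*)\to0$ by normality of the state with density $aa^*$, so Lemma~1 applies) and multiply by $x$ --- both are fine, and your variant is arguably a touch cleaner.
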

\begin{proof}
For the nonnegative measure
\[
 \|\pi(e(\cdot))\Lambda(a)\|_{\Ha}^2=\langle\Lambda(a)|\pi(e(\cdot))\Lambda(a)\rangle_{\Ha},
\]
we have
\begin{align*}
 \|\pi(e(E))\Lambda(a)\|_{\Ha}^2&=\|\Lambda(e(E)a)\|_{\Ha}^2\\
 &=\tau(a^*e(E)a)=\tau(e(E)aa^*)=\ro(E),
\end{align*}
where $\ro=(aa^*)\tau\in\M_*^+$ (i.e. $aa^*$ is the density of $\ro$), consequently,
\begin{equation}\label{fin}
 \int_{-\infty}^\infty \lambda^2\,\ro(e(d\lambda))=\int_{-\infty}^\infty \lambda^2\,\|\pi(e(d\lambda))\Lambda(a)\|_{\Ha}^2<\infty.
\end{equation}
For $x$ with spectral decomposition \eqref{sp1}, define its truncation $x_{[n]}$ by
\begin{equation}\label{trunc}
 x_{[n]}=\int_{-n}^n \lambda\,e(d\lambda).
\end{equation}
Then for $\xi\in\D(x)$, we have
\begin{equation}\label{trunc1}
 \|x\xi-x_{[n]}\xi\|^2=\int_{-\infty}^{-n}\lambda^2\,\|e(d\lambda)\xi\|^2+\int_n^\infty\lambda^2\,\|e(d\lambda)\xi\|^2\underset{n\to\infty}{\longrightarrow}0,
\end{equation}
since
\[
 \int_{-\infty}^\infty \lambda^2\,\|e(d\lambda)\xi\|^2<\infty.
\]

For $n>m$, we have by virtue of \eqref{fin}
\begin{align*}
 \|\Lambda(x_{[n]}a)-\Lambda(x_{[m]}a)\|_{\Ha}^2&=\tau(a^*|x_{[n]}-x_{[m]}|^2a)=\ro(|x_{[n]}-x_{[m]}|^2)\\
 &=\ro\bigg(\int_{-n}^{-m}\lambda^2\,e(d\lambda)+\int_m^n\lambda^2\,e(d\lambda)\bigg)\\
 &=\int_{-n}^{-m}\lambda^2\,\ro(e(d\lambda))+\int_m^n\lambda^2\,\ro(e(d\lambda))\underset{m,n\to\infty}{\longrightarrow}0,
\end{align*}
thus
\[
 \|x_{[n]}a-x_{[m]}a\|_2^2=\|\Lambda(x_{[n]}a)-\Lambda(x_{[m]}a)\|_{\Ha}\underset{m,n\to\infty}{\longrightarrow}0,
\]
which means that the sequence $(x_{[n]}a)$ in $L^2(\M,\tau)$ is Cauchy in $\|\cdot\|_2$-norm. So there is a $z\in L^2(\M,\tau)$ such that
\[
 \|x_{[n]}a-z\|_2\to0.
\]
In particular, $x_{[n]}a\to z$ \emph{in measure}. We have the formula
\[
 x-x_{[n]}=\int_{-\infty}^{-n}\lambda\,e(d\lambda)+\int_n^\infty\lambda\,e(d\lambda).
\]
For arbitrary $\varepsilon>0$, we can find $\alpha>0$ such that
\[
 \tau(e([-\alpha,\alpha])^\bot)=\tau(e((-\infty,-\alpha)\cup(\alpha,\infty)))<\varepsilon,
\]
and taking $n>\alpha$, we obtain
\[
 \|(x-x_{[n]})e([-\alpha,\alpha])\|_\infty=0,
\]
which means that $x_{[n]}\to x$ \emph{in Segal's sense}. In particular, $x_{[n]}\to x$ \emph{in measure}, and thus $x_{[n]}a\to xa$ \emph{in measure}, giving $z=xa\in L^2(\M,\tau)$, and
\begin{equation}\label{lam}
 \|\Lambda(x_{[n]}a)-\Lambda(xa)\|_{\Ha}=\|x_{[n]}a-xa\|_2\to0.
\end{equation}
Now we obtain, on account of the relation \eqref{trunc1} and an easily verifiable fact that $\pi(x_{[n]})=\pi(x)_{[n]}$,
\[
 \Lambda(x_{[n]}a)=\pi(x_{[n]})\Lambda(a)=\pi(x)_{[n]}\Lambda(a)\to\pi(x)\Lambda(a),
\]
which together with the formula \eqref{lam} shows the claim.
\end{proof}
In the same way, we get for $\Lambda(a)\in\D(\pi'(x))$ the formula
\[
 \pi'(x)\Lambda(a)=\Lambda(ax).
\]

\section{The relative modular operator (faithful state)}\label{MO}
In this section, we want to find the form of the relative modular operator $\Delta(\f,\om)$ in the space $\Ha$ in terms of the densities $h_\f$ and $h_\om$ of the states $\f$ and $\om$, respectively. For the sake of better readability, in order to avoid some technical complications we assume that $\om$ is faithful. $h_\f$ and $h_\om$ are selfadjoint positive operators in $L^1(\M,\tau)$ such that for $x\in\M$ the following formulae hold
\[
 \f(x)=\tau(h_\f x)=\tau\big(h_\f^{\frac{1}{2}}xh_\f^{\frac{1}{2}}\big), \qquad \om(x)=\tau(h_\om x)=\tau\big(h_\om^{\frac{1}{2}}xh_\om^{\frac{1}{2}}\big),
\]
so in the representation $\pi$ of $\M$ on $\Ha=L^2(\M,\tau)$ we have for $x\in\M$
\begin{equation}\label{f,om}
 \begin{aligned}
  \langle\Lambda\big(h_\f^{\frac{1}{2}}\big)|\pi(x)\Lambda\big(h_\f^{\frac{1}{2}}\big)\rangle_\Ha&=\langle\Lambda\big(h_\f^{\frac{1}{2}}\big)|\Lambda\big(xh_\f^{\frac{1}{2}}\big)\rangle_\Ha\\
  &=\tau\big(h_\f^{\frac{1}{2}}xh_\f^{\frac{1}{2}}\big)=\f(x),\\
  \langle\Lambda\big(h_\om^{\frac{1}{2}}\big)|\pi(x)\Lambda\big(h_\om^{\frac{1}{2}}\big)\rangle_\Ha&=\langle\Lambda\big(h_\om^{\frac{1}{2}}\big)|\Lambda\big(xh_\om^{\frac{1}{2}}\big)\rangle_\Ha\\
  &=\tau\big(h_\om^{\frac{1}{2}}xh_\om^{\frac{1}{2}}\big)=\om(x)
 \end{aligned}
\end{equation}
which means that in this representation $\f$ and $\om$ are vector states with representing vectors $\Lambda\big(h_\f^{\frac{1}{2}}\big)$ and $\Lambda\big(h_\om^{\frac{1}{2}}\big)$, respectively. Similarly, for the antirepresentation $\pi'$ we have
\begin{equation}\label{f,om1}
 \begin{aligned}
  \langle\Lambda\big(h_\f^{\frac{1}{2}}\big)|\pi'(x)\Lambda\big(h_\f^{\frac{1}{2}}\big)\rangle_\Ha&=\langle\Lambda\big(h_\f^{\frac{1}{2}}\big)|\Lambda\big(h_\f^{\frac{1}{2}}x\big)\rangle_\Ha\\
  &=\tau\big(h_\f^{\frac{1}{2}}h_\f^{\frac{1}{2}}x\big)=\f(x),\\
  \langle\Lambda\big(h_\om^{\frac{1}{2}}\big)|\pi'(x)\Lambda\big(h_\om^{\frac{1}{2}}\big)\rangle_\Ha&=\langle\Lambda\big(h_\om^{\frac{1}{2}}\big)|\Lambda\big(h_\om^{\frac{1}{2}}x\big)\rangle_\Ha\\
  &=\tau\big(h_\om^{\frac{1}{2}}h_\om^{\frac{1}{2}}x\big)=\om(x),
 \end{aligned}
\end{equation}
which means that also in the antirepresentation $\pi'$, $\f$ and $\om$ are vector states with representing vectors $\Lambda\big(h_\f^{\frac{1}{2}}\big)$ and $\Lambda\big(h_\om^{\frac{1}{2}}\big)$, respectively.

Following Araki \cite{A}, we define an antilinear operator $S$ on the space
\[
 \D(S)=\{\pi(x)\Lambda\big(h_\om^{\frac{1}{2}}\big):x\in\M\}=\{\Lambda\big(xh_\om^{\frac{1}{2}}\big):x\in\M\}
\]
by the formula
\[
 S\big(\Lambda\big(xh_\om^{\frac{1}{2}}\big)\big)=\pi(x)^*\Lambda\big(h_\f^{\frac{1}{2}}\big)=\Lambda\big(x^*h_\f^{\frac{1}{2}}\big).
\]
Since $\om$ is faithful, it follows that $S$ is densely defined, moreover, it is closable. The relative modular operator is then defined as
\[
 \Delta(\f,\om)=S^*\overline{S}.
\]
It is selfadjoint and positive, and the following polar decomposition holds
\[
 \overline{S}=J\Delta(\f,\om)^{\frac{1}{2}},
\]
where $J$ is an antilinear isometry such that $J^2=\id_{\Ha}$. Put
\[
 \D(S_0)=\{\Lambda\big(xh_\om^{\frac{1}{2}}\big):x\in \M\cap L^2(\M,\tau)\}, \qquad S_0=S|\D(S_0).
\]
\begin{lemma}\label{S_0}
We have
\[
 \overline{S_0}=\overline{S}.
\]
\end{lemma}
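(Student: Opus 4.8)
The plan is to leverage the $\sigma$-strong* density statement of Lemma~\ref{sstrong}. Since $\D(S_0)\subseteq\D(S)$ we have $S_0\subseteq S$, and as $S$ (hence also its restriction $S_0$) is closable, this already gives $\overline{S_0}\subseteq\overline{S}$; the whole content is the reverse inclusion, which I would obtain by showing $\Gamma(S)\subseteq\overline{\Gamma(S_0)}=\Gamma(\overline{S_0})$, that is, that every element of the graph of $S$ is a norm-limit of elements of the graph of $S_0$.

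So fix $x\in\M$; I must show $\big(\Lambda(xh_\om^{1/2}),\Lambda(x^*h_\f^{1/2})\big)\in\Gamma(\overline{S_0})$. By Lemma~\ref{sstrong} choose a net $(x_i)$ in $\M\cap L^2(\M,\tau)$ with $x_i\to x$ $\sigma$-strongly*. Each pair $\big(\Lambda(x_ih_\om^{1/2}),\Lambda(x_i^*h_\f^{1/2})\big)$ lies in $\Gamma(S_0)$, and I claim it converges in $\Ha\oplus\Ha$ to the pair above. Indeed, a short computation using the H\"older inequality and the trace property yields
\[
 \big\|\Lambda(x_ih_\om^{1/2})-\Lambda(xh_\om^{1/2})\big\|_\Ha^2=\tau\big(h_\om^{1/2}(x_i-x)^*(x_i-x)h_\om^{1/2}\big)=\om\big((x_i-x)^*(x_i-x)\big),
\]
\[
 \big\|\Lambda(x_i^*h_\f^{1/2})-\Lambda(x^*h_\f^{1/2})\big\|_\Ha^2=\tau\big(h_\f^{1/2}(x_i-x)(x_i-x)^*h_\f^{1/2}\big)=\f\big((x_i-x)(x_i-x)^*\big),
\]
and both right-hand sides tend to $0$, since $\sigma$-strong* convergence of $(x_i)$ to $x$ means exactly that $\ro\big((x_i-x)^*(x_i-x)\big)\to0$ and $\ro\big((x_i-x)(x_i-x)^*\big)\to0$ for every $\ro\in\M_*^+$. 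Therefore $\big(\Lambda(xh_\om^{1/2}),\Lambda(x^*h_\f^{1/2})\big)\in\overline{\Gamma(S_0)}=\Gamma(\overline{S_0})$; as $x\in\M$ was arbitrary, $S\subseteq\overline{S_0}$, hence $\overline{S}\subseteq\overline{S_0}$, and combined with the first paragraph, $\overline{S_0}=\overline{S}$.

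The only points requiring any care lie in the two displayed identities: one checks that $(x_i-x)h_\om^{1/2}$ and $(x_i-x)^*h_\f^{1/2}$ genuinely belong to $L^2(\M,\tau)$ by H\"older ($\tfrac1\infty+\tfrac12=\tfrac12$), and that cyclicity of $\tau$ on a product of two $L^2$-factors lets one slide one copy of $h_\om^{1/2}$, resp.\ $h_\f^{1/2}$, across so as to recognise $\tau(\,\cdot\,h_\om)=\om(\cdot)$ and $\tau(\,\cdot\,h_\f)=\f(\cdot)$ on $\M^+$. The conceptual point — and the reason Lemma~\ref{sstrong} is stated for $\sigma$-strong*, and not merely $\sigma$-strong, density — is the asymmetry of the two coordinates of the graph: the $h_\om^{1/2}$-coordinate is governed by $\om\big((x_i-x)^*(x_i-x)\big)$ whereas the $h_\f^{1/2}$-coordinate is governed by $\f\big((x_i-x)(x_i-x)^*\big)$, so one genuinely needs $\ro(z_i^*z_i)\to0$ \emph{and} $\ro(z_iz_i^*)\to0$ for $z_i=x_i-x$, which is precisely $\sigma$-strong* convergence to $0$.
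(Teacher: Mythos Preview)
Your proof is correct and follows essentially the same route as the paper's: approximate an arbitrary $x\in\M$ by a net $(x_i)$ in $\M\cap L^2(\M,\tau)$ via Lemma~\ref{sstrong}, and verify graph convergence by computing $\om(|x_i-x|^2)\to0$ and $\f(|x_i^*-x^*|^2)\to0$. Your added remarks on why $\sigma$-strong* (rather than merely $\sigma$-strong) density is needed are apt but not part of the paper's presentation.
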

\begin{proof}
Take an arbitrary $(\xi,\eta)\in\Gamma(S)$, $\xi=\Lambda\big(xh_\om^{\frac{1}{2}}\big)$, $\eta=\Lambda\big(x^*h_\f^{\frac{1}{2}}\big)$, and let $\{x_i\}$ be a net in $\M\cap L^2(\M,\tau)$ $\sigma$-strongly* convergent to $x$. We have
\begin{align*}
 \|\Lambda\big(x_ih_\om^{\frac{1}{2}}\big)-\Lambda\big(xh_\om^{\frac{1}{2}}\big)\|_{\Ha}^2&=\tau\big(h_\om^{\frac{1}{2}}|x_i-x|^2h_\om^{\frac{1}{2}}\big)\\
 &=\om(|x_i-x|^2)\to0,
\end{align*}
and
\begin{align*}
 \|\Lambda\big(x_i^*h_\f^{\frac{1}{2}}\big)-\Lambda\big(x^*h_\f^{\frac{1}{2}}\big)\|_{\Ha}^2&=\tau\big(h_\f^{\frac{1}{2}}|x_i^*-x^*|^2h_\f^{\frac{1}{2}}\big)\\
 &=\f(|x_i^*-x^*|^2)\to0,
\end{align*}
which shows that
\[
 \Gamma(S_0)\ni\big(\Lambda\big(x_ih_\om^{\frac{1}{2}}\big),\Lambda\big(x_i^*h_\f^{\frac{1}{2}}\big)\big)\to\big(\Lambda\big(xh_\om^{\frac{1}{2}}\big),\Lambda\big(x^*h_\f^{\frac{1}{2}}\big)\big)=(\xi,\eta).
\]
This means that $\Gamma(S)\subset\overline{\Gamma(S_0)}$, consequently $S\subset\overline{S_0}$, i.e. $\overline{S}\subset\overline{S_0}$. Since obviously $\overline{S_0}\subset\overline{S}$, the conclusion follows.
\end{proof}
From the definition of $\Delta(\f,\om)$, it follows that
\[
 \overline{\Delta(\f,\om)^{\frac{1}{2}}|\D(S)}=\Delta(\f,\om)^{\frac{1}{2}},
\]
however, we get more.
\begin{proposition}\label{D}
The following formula holds
\[
 \overline{\Delta(\f,\om)^{\frac{1}{2}}|\D(S_0)}=\Delta(\f,\om)^{\frac{1}{2}}.
\]
\end{proposition}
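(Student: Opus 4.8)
The plan is to show that $\D(S_0)$ is a core for $\Delta(\f,\om)^{\frac12}$, exploiting the polar decomposition $\overline S=J\Delta(\f,\om)^{\frac12}$ together with Lemma~\ref{S_0}. First I would note that, since $J$ is an antilinear isometry with $J^2=\id_\Ha$, applying $J$ to the polar decomposition gives, on the common domain $\D(\overline S)=\D(\Delta(\f,\om)^{\frac12})$, the identity
\[
 \Delta(\f,\om)^{\frac12}=J\overline S.
\]
Moreover $\D(S_0)\subseteq\D(S)\subseteq\D(\overline S)=\D(\Delta(\f,\om)^{\frac12})$, so that $\Delta(\f,\om)^{\frac12}$ is defined on all of $\D(S_0)$ and there $\Delta(\f,\om)^{\frac12}\xi=J\overline S\xi=JS_0\xi$.

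Next I would use Lemma~\ref{S_0}: the equality $\overline{S_0}=\overline S$ says precisely that $\D(S_0)$ is a core for $\overline S$, i.e. $\overline{\Gamma(S_0)}=\Gamma(\overline S)$. Hence, given an arbitrary $\xi\in\D(\overline S)=\D(\Delta(\f,\om)^{\frac12})$, there is a net $(\xi_\alpha)$ in $\D(S_0)$ with $\xi_\alpha\to\xi$ and $S_0\xi_\alpha\to\overline S\xi$. Since $J$ is an isometry, hence continuous,
\[
 \Delta(\f,\om)^{\frac12}\xi_\alpha=JS_0\xi_\alpha\longrightarrow J\overline S\xi=\Delta(\f,\om)^{\frac12}\xi,
\]
and therefore
\[
 \Gamma\big(\Delta(\f,\om)^{\frac12}\big|\D(S_0)\big)\ni\big(\xi_\alpha,\Delta(\f,\om)^{\frac12}\xi_\alpha\big)\longrightarrow\big(\xi,\Delta(\f,\om)^{\frac12}\xi\big).
\]
As $\xi$ was arbitrary in $\D(\Delta(\f,\om)^{\frac12})$ and $\Delta(\f,\om)^{\frac12}$ is closed (being selfadjoint), this gives $\Gamma(\Delta(\f,\om)^{\frac12})\subseteq\overline{\Gamma(\Delta(\f,\om)^{\frac12}|\D(S_0))}$, and since the reverse inclusion is trivial we conclude $\overline{\Delta(\f,\om)^{\frac12}|\D(S_0)}=\Delta(\f,\om)^{\frac12}$.

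The only point needing any care is the transfer of the core property: one is handed a core for the antilinear operator $\overline S$ and must produce a core for the positive selfadjoint operator $\Delta(\f,\om)^{\frac12}$. This goes through cleanly precisely because $J$ is bounded and $\Delta(\f,\om)^{\frac12}=J\overline S$ holds on the shared domain, so the graph convergence needed for $\Delta(\f,\om)^{\frac12}$ is obtained from that for $\overline S$ merely by applying $J$ to the second coordinates; there is no analytic obstacle beyond this bookkeeping with domains.
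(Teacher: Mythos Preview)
Your proof is correct and follows essentially the same approach as the paper: both use the polar decomposition $\overline S=J\Delta(\f,\om)^{1/2}$ together with Lemma~\ref{S_0} and the fact that $J$ is an (anti)unitary to transfer the core property from $\overline S$ to $\Delta(\f,\om)^{1/2}$. The only cosmetic difference is that the paper packages the graph-convergence step by invoking Lemma~\ref{clop} (applied to the antiunitary $J$) to obtain $J\,\overline{J\Delta(\f,\om)^{1/2}|\D(S_0)}=\overline{\Delta(\f,\om)^{1/2}|\D(S_0)}$, whereas you unpack this directly at the level of graphs.
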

\begin{proof}
We have
\[
 J\Delta(\f,\om)^{\frac{1}{2}}|\D(S_0)=\overline{S}|\D(S_0)=S_0,
\]
hence
\[
 \overline{J\Delta(\f,\om)^{\frac{1}{2}}|\D(S_0)}=\overline{S_0}=\overline{S}=J\Delta(\f,\om)^{\frac{1}{2}}.
\]
Since $J$ is antiunitary and $J^2=\id_{\Ha}$, we obtain, applying $J$ to both sides of the above equality and taking into account Lemma \ref{clop},
\begin{align*}
 \Delta(\f,\om)^{\frac{1}{2}}&=J\big(\overline{J\Delta(\f,\om)^{\frac{1}{2}}|\D(S_0)}\big)\\
 &=\overline{J^2\Delta(\f,\om)^{\frac{1}{2}}|\D(S_0)}=\overline{\Delta(\f,\om)^{\frac{1}{2}}|\D(S_0)}. \qedhere
\end{align*}
\end{proof}
Our next aim is to relate $\Delta(\f,\om)^{\frac{1}{2}}$ to the operator\\ $\pi\big(h_\f^{\frac{1}{2}}\big)\pi'\big(h_\om^{-\frac{1}{2}}\big)$. We start with
\begin{proposition}\label{Delta0}
The following formula holds
\[
 \pi(h_\f^{\frac{1}{2}})\pi'(h_\om^{-\frac{1}{2}})|\D(S_0)=\Delta(\f,\om)^{\frac{1}{2}}|\D(S_0).
\]
\end{proposition}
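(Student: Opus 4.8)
The plan is to verify the asserted identity vector by vector on $\D(S_0)$. Fix $x\in\M\cap L^2(\M,\tau)$ and put $\xi=\Lambda\big(xh_\om^{\frac{1}{2}}\big)\in\D(S_0)$. First I would evaluate the left-hand side on $\xi$. Since $\om$ is faithful, $h_\om$ has support $\1$, so that $h_\om^{-\frac{1}{2}}$ is a well-defined positive selfadjoint measurable operator with $h_\om^{\frac{1}{2}}h_\om^{-\frac{1}{2}}=\1$; as $\big(xh_\om^{\frac{1}{2}}\big)h_\om^{-\frac{1}{2}}=x\in L^2(\M,\tau)$, the $\pi'$-version of Proposition \ref{P} (together with its converse $ay\in L^2(\M,\tau)\Rightarrow\Lambda(a)\in\D(\pi'(y))$, which comes out of the same spectral computation) yields $\pi'\big(h_\om^{-\frac{1}{2}}\big)\xi=\Lambda(x)$; and since $h_\f^{\frac{1}{2}}x\in L^2(\M,\tau)$ by Hölder's inequality, Proposition \ref{P} gives $\pi\big(h_\f^{\frac{1}{2}}\big)\Lambda(x)=\Lambda\big(h_\f^{\frac{1}{2}}x\big)$. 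Hence the left-hand side maps $\xi$ to $\Lambda\big(h_\f^{\frac{1}{2}}x\big)$.

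Next I would evaluate the right-hand side. From the polar decomposition $\overline S=J\Delta(\f,\om)^{\frac{1}{2}}$, the fact that $\overline S$ extends $S_0$ with $\D(S_0)\subset\D(\overline S)$, and $J^2=\id_\Ha$, we get $\Delta(\f,\om)^{\frac{1}{2}}\xi=J\overline S\xi=JS_0\xi=J\Lambda\big(x^*h_\f^{\frac{1}{2}}\big)$. So everything reduces to showing that $J$ acts on these vectors as the canonical conjugation $J_0$ of $\Ha=L^2(\M,\tau)$, $J_0\Lambda(a)=\Lambda(a^*)$; granting this, $\Delta(\f,\om)^{\frac{1}{2}}\xi=J_0\Lambda\big(x^*h_\f^{\frac{1}{2}}\big)=\Lambda\big(h_\f^{\frac{1}{2}}x\big)$, matching the first paragraph.

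The identification of $J$ with $J_0$ is the crux. (Conceptually it is immediate from Araki's theory \cite{A}: the implementing vectors $\Lambda\big(h_\f^{\frac{1}{2}}\big)$, $\Lambda\big(h_\om^{\frac{1}{2}}\big)$ are positive, hence lie in the natural cone of the standard form carried by $\Ha$, whose conjugation is $J_0$; but I would prefer a self-contained argument.) The operators $\pi\big(h_\f^{\frac{1}{2}}\big)$ and $\pi'\big(h_\om^{-\frac{1}{2}}\big)$ are positive selfadjoint and affiliated with the commuting algebras $\pi(\M)$ and $\pi'(\M)=\pi(\M)'$ (cf. \eqref{pi1}, \eqref{pi'1}), hence strongly commute, so $A:=\pi\big(h_\f^{\frac{1}{2}}\big)\pi'\big(h_\om^{-\frac{1}{2}}\big)$ is positive selfadjoint, $\D(S_0)\subset\D(A)$, and $A|\D(S_0)$ is exactly the map $\Lambda\big(xh_\om^{\frac{1}{2}}\big)\mapsto\Lambda\big(h_\f^{\frac{1}{2}}x\big)$ of the first paragraph. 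One checks directly that $S_0=J_0\big(A|\D(S_0)\big)$, so by the antiunitary analogue of Lemma \ref{clop} and by Lemma \ref{S_0} we obtain $\overline S=\overline{S_0}=J_0\,\overline{A|\D(S_0)}$, and therefore $\Delta(\f,\om)=(\overline S)^{*}\overline S=\big(\overline{A|\D(S_0)}\big)^{*}\,\overline{A|\D(S_0)}$. It follows that $\Delta(\f,\om)^{\frac{1}{2}}=\overline{A|\D(S_0)}$ provided $\overline{A|\D(S_0)}$ is selfadjoint, i.e. provided $\D(S_0)$ is a core for $A$; this is the main obstacle. I would clear it by showing that the standard core $\bigcup_n\pi(p_n)\pi'(q_n)\Ha$ of $A$ — with $p_n$, $q_n$ the truncating spectral projections of $h_\f^{\frac{1}{2}}$ and $h_\om^{-\frac{1}{2}}$ — lies in the graph closure of $A|\D(S_0)$, using the explicit approximants $x_k=p_na_kq_nh_\om^{-\frac{1}{2}}\in\M\cap L^2(\M,\tau)$ ($a_k$ bounded, $a_k\to a$ in $\|\cdot\|_2$), where $q_nh_\om^{-\frac{1}{2}}\in\M\cap L^2(\M,\tau)$ because $\tau(q_n)<\infty$ by $h_\om\in L^1(\M,\tau)$ and Chebyshev's inequality. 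Restricting $\Delta(\f,\om)^{\frac{1}{2}}=A$ back to $\D(S_0)$ then gives the proposition.
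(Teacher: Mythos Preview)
Your proof is correct, and the ``conceptual'' route you mention---identifying $J$ with the canonical conjugation $J_0\colon\Lambda(a)\mapsto\Lambda(a^*)$ via the natural cone---is precisely the paper's approach. The paper phrases it as ``from modular theory it follows that the isometry $J$ on this cone is identity'' and then tests both $\pi(h_\f^{1/2})\pi'(h_\om^{-1/2})\Lambda(xh_\om^{1/2})$ and $\Delta(\f,\om)^{1/2}\Lambda(xh_\om^{1/2})$ against cone vectors $\Lambda(h^{1/2})$, using $J\Delta^{1/2}=\overline S$; since the cone spans $\Ha$, the two vectors coincide. Your version deduces $J=J_0$ from $J|_{\text{cone}}=\id$ and applies it directly to $J\Lambda(x^*h_\f^{1/2})$, which is the same argument packaged slightly differently.

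Your self-contained alternative is a genuinely different route: rather than invoking the cone fact, you factor $\overline S=J_0\,\overline{A|\D(S_0)}$ and then argue that $\overline{A|\D(S_0)}$ is selfadjoint by showing $\D(S_0)$ is a core for $A$, whence $\Delta=A^2$ and the proposition follows. This works, and it has the virtue of yielding Theorem~\ref{Delta1} simultaneously (whereas the paper proves Proposition~\ref{Delta0} first and only then deduces Theorem~\ref{Delta1}). Two small caveats: first, the raw product $\pi(h_\f^{1/2})\pi'(h_\om^{-1/2})$ of strongly commuting unbounded selfadjoint operators need not be selfadjoint on the nose---only its closure is---so $A$ should be taken to be the closure (equivalently the spectral integral $\int uv\,m(d\lambda)$, as the paper does just after Proposition~\ref{Delta0}); second, your core argument needs the approximants $a_k$ to lie in $\M\cap L^2(\M,\tau)$ (not just $\M$), which is fine if you take truncations of $a$. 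With these adjustments the self-contained argument is complete, at the cost of being longer than the paper's one-line appeal to modular theory.
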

\begin{proof}
Observe first that $\pi'\big(h_\om^{-\frac{1}{2}}\big)=\pi'\big(h_\om^{\frac{1}{2}}\big)^{-1}$, so
\[
 \pi'(h_\om^{-\frac{1}{2}})\pi'(h_\om^{\frac{1}{2}})=\id|\D\big(\pi'\big(h_\om^{\frac{1}{2}}\big)\big).
\]
Since for $x\in\M\cap L^2(\M,\tau)$, we have $\Lambda(x)\in\D\big(\pi'\big(h_\om^{\frac{1}{2}}\big)\big)$ and
\[
 \pi'\big(h_\om^{\frac{1}{2}}\big)\Lambda(x)=\Lambda\big(xh_\om^{\frac{1}{2}}\big),
\]
it follows that $\Lambda\big(xh_\om^{\frac{1}{2}}\big)\in\D(\pi'\big(h_\om^{-\frac{1}{2}}\big)$ and
\[
 \pi'\big(h_\om^{-\frac{1}{2}}\big)\Lambda\big(xh_\om^{\frac{1}{2}}\big)=\Lambda(x).
\]
For $x$ as above, we have $xh_\f^{\frac{1}{2}}\in L^2(\M,\tau)$, so
\[
 \pi(h_\f^{\frac{1}{2}})\pi'(h_\om^{-\frac{1}{2}})\Lambda(xh_\om^{\frac{1}{2}})=\pi(h_\f^{\frac{1}{2}})\Lambda(x)=\Lambda(h_\f^{\frac{1}{2}}x).
\]
Let $h\geq0$ be an arbitrary element in $L^1(\M,\tau)$. Then $h^{\frac{1}{2}}\in L^2(\M,\tau)$, moreover, all positive elements in $L^2(\M,\tau)$ arise in this way. The set $\{\Lambda(h^{\frac{1}{2}}):h\in L^1(\M,\tau), h\geq0\}$ is a positive cone in $\Ha$, and we have
\begin{align*}
 \langle\Lambda(h^{\frac{1}{2}})|\pi(h_\f^{\frac{1}{2}})\pi'(h_\om^{-\frac{1}{2}})\Lambda(xh_\om^{\frac{1}{2}})\rangle_{\Ha}&=\langle\Lambda(h^{\frac{1}{2}})|\Lambda(h_\f^{\frac{1}{2}}x)\rangle_{\Ha}\\
 &=\tau\big(h^{\frac{1}{2}}h_\f^{\frac{1}{2}}x\big).
\end{align*}
On the other hand, from modular theory it follows that the isometry $J$ on this cone is identity, consequently, we get
\begin{align*}
 &\langle\Lambda(h^{\frac{1}{2}})|\Delta(\f,\om)^{\frac{1}{2}}\Lambda(xh_\om^{\frac{1}{2}})\rangle_{\Ha}
 =\overline{\langle J(\Lambda(h^{\frac{1}{2}}))|J\Delta(\f,\om)^{\frac{1}{2}}\Lambda(xh_\om^{\frac{1}{2}})\rangle}_{\Ha}\\
 =&\overline{\langle\Lambda(h^{\frac{1}{2}})|S\big(\Lambda\big(xh_\om^{\frac{1}{2}}\big)\big)\rangle}_{\Ha}=\langle\Lambda\big(x^*h_\f^{\frac{1}{2}}\big)|\Lambda(h^{\frac{1}{2}})\rangle_{\Ha}\\
 =&\tau\big(h_\f^{\frac{1}{2}}xh^{\frac{1}{2}}\big)=\tau\big(h^{\frac{1}{2}}h_\f^{\frac{1}{2}}x\big),
\end{align*}
where the change of order under the trace in the last equality is justified by the fact that $h^{\frac{1}{2}},h_\f^{\frac{1}{2}}x\in L^2(\M,\tau)$. It follows that
\[
 \langle\Lambda(h^{\frac{1}{2}})|\pi(h_\f^{\frac{1}{2}})\pi'(h_\om^{-\frac{1}{2}})\Lambda(xh_\om^{\frac{1}{2}})\rangle_\Ha
 =\langle\Lambda(h^{\frac{1}{2}})|\Delta(\f,\om)^{\frac{1}{2}}\Lambda(xh_\om^{\frac{1}{2}})\rangle_{\Ha},
\]
and since the positive cone spans the whole of $\Ha$, we get
\[
 \pi(h_\f^{\frac{1}{2}})\pi'(h_\om^{-\frac{1}{2}})\Lambda(xh_\om^{\frac{1}{2}})=\Delta(\f,\om)^{\frac{1}{2}}\Lambda(xh_\om^{\frac{1}{2}}),
\]
proving the claim.
\end{proof}
As a corollary we obtain, taking into account Proposition \ref{D}, the formula
\[
 \overline{\pi(h_\f^{\frac{1}{2}})\pi'(h_\om^{-\frac{1}{2}})|\D(S_0)}=\Delta(\f,\om)^{\frac{1}{2}}.
\]
The selfadjoint positive operators $\pi(h_\f^{\frac{1}{2}})$ and $\pi'(h_\om^{-\frac{1}{2}})$ commute, that is their spectral measures commute, thus there is a spectral measure $m$ in $\Ha$, and nonnegative Borel functions $u$ and $v$ (in fact, $v$ is even positive) such that
\begin{equation}\label{pi}
 \pi(h_\f^{\frac{1}{2}})=\int_{-\infty}^\infty u(\lambda)\,m(d\lambda), \quad \pi'(h_\om^{-\frac{1}{2}})=\int_{-\infty}^\infty v(\lambda)\,m(d\lambda).
\end{equation}
Denote for brevity
\[
 A=\int_{-\infty}^\infty u(\lambda)v(\lambda)\,m(d\lambda).
\]
Then $A$ is a selfadjoint positive operator such that
\[
 \pi(h_\f^{\frac{1}{2}})\pi'(h_\om^{-\frac{1}{2}})\subset A.
\]
\begin{theorem}\label{Delta1}
Let $\pi(h_\f^{\frac{1}{2}})$ and $\pi'(h_\om^{-\frac{1}{2}})$ be represented by the formula \eqref{pi}. Then
\begin{equation}\label{Delta}
 \Delta(\f,\om)^{\frac{1}{2}}=\overline{\pi(h_\f^{\frac{1}{2}})\pi'(h_\om^{-\frac{1}{2}})}=\int_{-\infty}^\infty u(\lambda)v(\lambda)\,m(d\lambda).
\end{equation}
\end{theorem}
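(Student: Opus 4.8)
The plan is to read off the theorem from the corollary stated after Proposition~\ref{Delta0} together with the elementary fact that a selfadjoint operator has no proper symmetric extension. Recall what has already been established:
\[
 \overline{\pi(h_\f^{\frac{1}{2}})\pi'(h_\om^{-\frac{1}{2}})|\D(S_0)}=\Delta(\f,\om)^{\frac{1}{2}},
\]
that $\Delta(\f,\om)^{\frac{1}{2}}=S^*\overline{S}$ is selfadjoint and positive, and that $A=\int_{-\infty}^\infty u(\lambda)v(\lambda)\,m(d\lambda)$ is selfadjoint (and positive) with $\pi(h_\f^{\frac{1}{2}})\pi'(h_\om^{-\frac{1}{2}})\subset A$. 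The argument will consist in squeezing both $\Delta(\f,\om)^{\frac{1}{2}}$ and $\overline{\pi(h_\f^{\frac{1}{2}})\pi'(h_\om^{-\frac{1}{2}})}$ between the same two operators.

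First I would note that restricting the inclusion $\pi(h_\f^{\frac{1}{2}})\pi'(h_\om^{-\frac{1}{2}})\subset A$ to the core $\D(S_0)$ gives $\pi(h_\f^{\frac{1}{2}})\pi'(h_\om^{-\frac{1}{2}})|\D(S_0)\subset A$ (indeed, by the computations in the proof of Proposition~\ref{Delta0}, $\D(S_0)\subset\D\big(\pi(h_\f^{\frac{1}{2}})\pi'(h_\om^{-\frac{1}{2}})\big)\subset\D(A)$, and $A$ acts on these vectors exactly as the product does). Since $A$ is selfadjoint, hence closed, passing to closures yields
\[
 \Delta(\f,\om)^{\frac{1}{2}}=\overline{\pi(h_\f^{\frac{1}{2}})\pi'(h_\om^{-\frac{1}{2}})|\D(S_0)}\subset A .
\]

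Next, from $\Delta(\f,\om)^{\frac{1}{2}}\subset A$ with both operators selfadjoint I would conclude equality: taking adjoints reverses the inclusion, so $A=A^*\subset\big(\Delta(\f,\om)^{\frac{1}{2}}\big)^*=\Delta(\f,\om)^{\frac{1}{2}}$, whence $\Delta(\f,\om)^{\frac{1}{2}}=A$. Finally, inserting the (a priori larger) closure of the full product,
\[
 \Delta(\f,\om)^{\frac{1}{2}}=\overline{\pi(h_\f^{\frac{1}{2}})\pi'(h_\om^{-\frac{1}{2}})|\D(S_0)}\subset\overline{\pi(h_\f^{\frac{1}{2}})\pi'(h_\om^{-\frac{1}{2}})}\subset\overline{A}=A=\Delta(\f,\om)^{\frac{1}{2}},
\]
so all three operators coincide, which is precisely \eqref{Delta}.

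I do not expect a serious obstacle here: the analytic content — identifying $\Delta(\f,\om)^{\frac{1}{2}}$ with the closure of the product on the convenient core $\D(S_0)$, and checking that $\pi(h_\f^{\frac{1}{2}})$ and $\pi'(h_\om^{-\frac{1}{2}})$ commute so that $A$ is well defined and selfadjoint — has already been done in the preceding propositions. The one point requiring care is the uniqueness step, which relies on the fact that a symmetric extension of a selfadjoint operator must equal it; this is why it is essential that $\Delta(\f,\om)^{\frac{1}{2}}$ and $A$ are \emph{genuinely} selfadjoint rather than merely symmetric.
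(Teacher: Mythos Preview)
Your argument is correct and essentially identical to the paper's: both establish the chain $\Delta(\f,\om)^{\frac{1}{2}}\subset\overline{\pi(h_\f^{\frac{1}{2}})\pi'(h_\om^{-\frac{1}{2}})}\subset A$ and then take adjoints to reverse it, using that a selfadjoint operator admits no proper symmetric extension. One trivial slip: you wrote $\Delta(\f,\om)^{\frac{1}{2}}=S^*\overline{S}$, but of course it is $\Delta(\f,\om)=S^*\overline{S}$; this does not affect the reasoning.
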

\begin{proof}
We have
\[
 \Delta(\f,\om)^{\frac{1}{2}}=\overline{\pi(h_\f^{\frac{1}{2}})\pi'(h_\om^{-\frac{1}{2}})|\D(S_0)}\subset\overline{\pi(h_\f^{\frac{1}{2}})\pi'(h_\om^{-\frac{1}{2}})}\subset A,
\]
and taking adjoints, we get
\[
 A=A^*\subset\Big(\overline{\pi(h_\f^{\frac{1}{2}})\pi'(h_\om^{-\frac{1}{2}})}\Big)^*\subset\Big(\Delta(\f,\om)^{\frac{1}{2}}\Big)^*=\Delta(\f,\om)^{\frac{1}{2}},
\]
i.e.
\[
 \Delta(\f,\om)^{\frac{1}{2}}=A=\overline{\pi(h_\f^{\frac{1}{2}})\pi'(h_\om^{-\frac{1}{2}})}. \qedhere
\]
\end{proof}

\section{The relative modular operator (arbitrary state)}\label{MO1}
In this section, we drop the assumption about the faithfulness of $\om$. To define the relative modular operator, we need some additional notions. First observe that if $h_\om$ is not invertible, then the space $\{\Lambda(xh_\om^{\frac{1}{2}}):x\in\M\}$ is not dense in $\Ha$. Because of the equality $\Lambda(xh_\om^{\frac{1}{2}})=\pi(x)\Lambda(h_\om^{\frac{1}{2}})$, this space is $\pi(\M)$-invariant, so the projection onto it belongs to $\pi(\M)'$, moreover, this projection is the so-called \emph{support} in $\pi(\M)'$ of the vector $\Lambda(h_\om^{\frac{1}{2}})$. On the other hand, if we look at the vector state $\langle\Lambda(h_\om^{\frac{1}{2}})|\cdot\Lambda(h_\om^{\frac{1}{2}})\rangle_\Ha$ restricted to the algebra $\pi(\M)'$, then the support projection of the vector $\Lambda(h_\om^{\frac{1}{2}})$ in $\pi(\M)'$ is nothing else but the support in $\pi(\M)'$ of this vector state.

For a von Neumann algebra $\N$ acting on a Hilbert space $\mathcal{K}$, and a vector state $\ro$ on $\N$ given by a vector $\xi\in\mathcal{K}$, denote by $\s^\N(\ro)=\s^\N(\xi)$ the support of $\ro$ (respectively $\xi$) in $\N$. Since $\pi$ and $\pi'$ are faithful representations, and since $\om$ is in these representations a vector state, we have
\begin{equation}\label{s}
 \begin{aligned}
  \s^{\pi(\M)}(\Lambda(h_\om^{\frac{1}{2}}))&=\pi(\s^\M(\om)),\\
  \s^{\pi(\M)'}(\Lambda(h_\om^{\frac{1}{2}}))&=\s^{\pi'(\M)}(\Lambda(h_\om^{\frac{1}{2}}))=\pi'(\s^\M(\om)),
 \end{aligned}
\end{equation}
and analogously
\begin{equation}\label{s1}
 \begin{aligned}
  \s^{\pi(\M)}(\Lambda(h_\f^{\frac{1}{2}}))&=\pi(\s^\M(\f)),\\
  \s^{\pi(\M)'}(\Lambda(h_\f^{\frac{1}{2}}))&=\s^{\pi'(\M)}(\Lambda(h_\f^{\frac{1}{2}}))=\pi'(\s^\M(\f)).
 \end{aligned}
\end{equation}
Following Araki \cite{A1}, we define the operator $S$ by the formulae
\begin{align*}
 &\D(S)=\{\pi(x)\Lambda\big(h_\om^{\frac{1}{2}}\big):x\in\M\}+\big(\1-\s^{\pi(\M)'}\big(\Lambda\big(h_\om^{\frac{1}{2}}\big)\big)\big)\Ha\\
 =&\{\Lambda(xh_\om^{\frac{1}{2}}):x\in\M\}+\big(\1-\s^{\pi(\M)'}\big(\Lambda\big(h_\om^{\frac{1}{2}}\big)\big)\big)\Ha,\\
 &S(\Lambda(xh_\om^{\frac{1}{2}})+\xi)=\s^{\pi(\M)}\big(\Lambda\big(h_\om^{\frac{1}{2}}\big)\big)\pi(x)^*\Lambda(h_\f^{\frac{1}{2}}),\\ &\xi\in\big(\1-\s^{\pi(\M)'}\big(\Lambda\big(h_\om^{\frac{1}{2}}\big)\big)\big)\Ha.
\end{align*}
It follows that $S$ is a densely defined closable antilinear operator on $\Ha$. Taking into account the relation \eqref{s}, we get
\begin{align*}
 S(\Lambda\big(xh_\om^{\frac{1}{2}}\big)+\xi)&=\s^{\pi(\M)}\pi(x)^*\Lambda\big(h_\f^{\frac{1}{2}}\big)=\pi(\s^\M(\om))\pi(x)^*\Lambda\big(h_\f^{\frac{1}{2}}\big)\\
 &=\pi(\s^\M(\om)x^*)\Lambda\big(h_\f^{\frac{1}{2}}\big)=\Lambda\big(\s^\M(\om)x^*h_\f^{\frac{1}{2}}\big).
\end{align*}
The relative modular operator $\Delta(\f,\om)$ is again defined as
\[
 \Delta(\f,\om)=S^*\overline{S},
\]
and as in Section \ref{MO}, we have
\[
 \overline{S}=J\Delta(\f,\om)^{\frac{1}{2}}.
\]
Now we basically follow the lines of Section \ref{MO} with some necessary refinements. The first step is to define the operator $S_0$:
\begin{align*}
 \D(S_0)&=\{\Lambda\big(xh_\om^{\frac{1}{2}}\big):x\in \M\cap L^2(\M,\tau)\}+\big(\1-\s^{\pi(\M)'}\big(\Lambda\big(h_\om^{\frac{1}{2}}\big)\big)\big)\Ha,\\
 S_0&=S|\D(S_0).
\end{align*}
Now Lemma \ref{S_0} remains unchanged, namely
\begin{lemma'}
We have
\[
 \overline{S_0}=\overline{S}.
\]
\end{lemma'}
\begin{proof}
The only new ingredient in the proof, when compared with the proof of Lemma \ref{S_0}, is taking into account the support of $\om$ which now occurs in the definition of $S_0$. Namely, we have
\begin{align*}
 &\|\Lambda\big(\s^\M(\om)x_i^*h_\f^{\frac{1}{2}}\big)-\Lambda\big(\s^\M(\om)x^*h_\f^{\frac{1}{2}}\big)\|_{\Ha}^2\\
 &=\tau\big(h_\f^{\frac{1}{2}}(x_i-x)\s^\M(\om)(x_i^*-x^*)h_\f^{\frac{1}{2}}\big)\\
 &=\f((x_i-x)\s^\M(\om)(x_i^*-x^*))\leq\f((x_i-x)(x_i^*-x^*))\to0,
\end{align*}
and the rest of the proof is the same as that of Lemma \ref{S_0}.
\end{proof}
As for Proposition \ref{D}, its proof can be repeated word for word, thus we have
\begin{proposition'1}
The following formula holds
\[
 \overline{\Delta(\f,\om)^{\frac{1}{2}}|\D(S_0)}=\Delta(\f,\om)^{\frac{1}{2}}.
\]
\end{proposition'1}
A little more effort is needed for proving a counterpart of Proposition \ref{Delta0}. To this end, we must first find a counterpart of the operator $h_\om^{-\frac{1}{2}}$. Let a function $w\colon\mathbb{R}_+\to\mathbb{R}$ be defined by the formula
\[
 w(\lambda)=\begin{cases}
 \frac{1}{\lambda}, & \text{for $\lambda\ne0$}\\
 0, & \text{for $\lambda=0$}
 \end{cases},
\]
and put
\begin{equation}\label{h_om}
 \widetilde{h_\om}=w(h_\om)=\int_0^\infty w(\lambda)\,e(d\lambda).
\end{equation}
If
\[
 h_\om=\int_0^\infty\lambda\,e(d\lambda)
\]
is the spectral decomposition of $h_\om$, then for the antirepresentation $\pi'$ we have
\begin{align*}
 \pi'(h_\om)&=\int_0^\infty\lambda\,\pi'(e(d\lambda)),\\
 \pi'(\widetilde{h_\om})&=\pi'(w(h_\om))=w(\pi'(h_\om))=\int_0^\infty w(\lambda)\,\pi'(e(d\lambda)),
\end{align*}
hence the operator calculus yields
\begin{align*}
 \pi'\Big(\widetilde{h_\om}^{\frac{1}{2}}\Big)\pi'\big(h_\om^{\frac{1}{2}}\big)&\subset\int_0^\infty w(\lambda)^{\frac{1}{2}}\lambda^{\frac{1}{2}}\,\pi'(e(d\lambda))\\
 &=\pi'(e((0,\infty)))=\s^{\pi'(\M)}(\pi'(h_\om)).
\end{align*}
Put
\[
 \Ha_0=\s^{\pi'(\M)}(\pi'(h_\om))\Ha.
\]
Then $\Ha_0$ is the closure of the range of $\pi'(h_\om)$ as well as the closure of the range of $\pi'(\widetilde{h_\om})$, thus the operators $\pi'\big(h_\om^{\frac{1}{2}}\big)$ and $\pi'\Big(\widetilde{h_\om}^{\frac{1}{2}}\Big)$ are injective on $\Ha_0$; moreover, they send $\Ha_0$ (dense subspaces of) to $\Ha_0$, so we can consider the restricted operators $\pi'\big(h_\om^{\frac{1}{2}}\big)|\Ha_0$ and $\pi'\Big(\widetilde{h_\om}^{\frac{1}{2}}\Big)\Big|\Ha_0$. These operators are selfadjoint, positive, and we have
\[
 \Big(\pi'\Big(\widetilde{h_\om}^{\frac{1}{2}}\Big)\Big|\Ha_0\Big)\big(\pi'\big(h_\om^{\frac{1}{2}}\big)|\Ha_0\big)=\id_{\Ha_0}|\D\big(\pi'\big(h_\om^{\frac{1}{2}}\big)|\Ha_0\big),
\]
which means that
\begin{equation}\label{pi-1}
 \pi'\Big(\widetilde{h_\om}^{\frac{1}{2}}\Big)\Big|\Ha_0=\big(\pi'\big(h_\om^{\frac{1}{2}}\big)|\Ha_0\big)^{-1}.
\end{equation}
Moreover,
\begin{equation}\label{pi0}
 \pi'\Big(\widetilde{h_\om}^{\frac{1}{2}}\Big)\Big|\Ha_0^\bot=\pi'\big(h_\om^{\frac{1}{2}}\big)|\Ha_0^\bot=0.
\end{equation}
Assume that $x\in\M\cap L^2(\M,\tau)$.

Let $E$ be a Borel subset of $(0,\infty)$. Then, since $e(E)\leq e((0,\infty))=\s(\om)$, we have
\begin{align*}
 \pi'(e(E))\Lambda(x\s^\M(\om))&=\Lambda(x\s^\M(\om)e(E))\\
 &=\Lambda(xe(E))=\pi'(e(E))\Lambda(x).
\end{align*}
This yields
\[
 \int_0^\infty\lambda\,\|\pi'(e(d\lambda))\Lambda(x\s^\M(\om))\|^2=\int_0^\infty\lambda\,\|\pi'(e(d\lambda))\Lambda(x)\|^2<\infty,
\]
because $\Lambda(x)\in\D\big(\pi'\big(h_\om^{\frac{1}{2}}\big)\big)$. From this inequality, it follows that $\Lambda(x\s^\M(\om))\in\D\big(\pi'\big(h_\om^{\frac{1}{2}}\big)\big)$. Moreover, we have
\[
 \s^{\pi'(\M)}(\pi'(h_\om))\Lambda(x)=\pi'(\s^\M(\om))\Lambda(x)=\Lambda(x\s^\M(\om)),
\]
which shows that $\Lambda(x\s^\M(\om))\in\Ha_0$. Consequently,
\[
 \Lambda(x\s^\M(\om))\in\D\big(\pi'\big(h_\om^{\frac{1}{2}}\big)\big|\Ha_0\big),
\]
and
\[
 \big(\pi'\big(h_\om^{\frac{1}{2}}\big)\big|\Ha_0\big)\Lambda(x\s^\M(\om))=\Lambda\big(x\s^\M(\om)h_\om^{\frac{1}{2}}\big)=\Lambda\big(xh_\om^{\frac{1}{2}}\big).
\]
This equality together with the equality \eqref{pi-1} yield
\[
 \Lambda\big(xh_\om^{\frac{1}{2}}\big)\in\D\big(\pi'\big(\widetilde{h_\om}^{\frac{1}{2}}\big)\big),
\]
and
\[
 \big(\pi'\big(\widetilde{h_\om}^{\frac{1}{2}}\big)\big)\Lambda\big(xh_\om^{\frac{1}{2}}\big)=\Lambda(x\s^\M(\om)).
\]
Since for $x\in\M\cap L^2(\M,\tau)$, we have $x\s^\M(\om)\in\M\cap L^2(\M,\tau)$, and thus $\Lambda(x\s^\M(\om))\in\D(\pi(h_\f^{\frac{1}{2}}))$, we get
\begin{equation}\label{bas}
 \begin{aligned}
  \Lambda\big(h_\f^{\frac{1}{2}}x\s^\M(\om)\big)&=\pi\big(h_\f^{\frac{1}{2}}\big)\Lambda(x\s^\M(\om))\\
  &=\pi\big(h_\f^{\frac{1}{2}}\big)\big(\pi'\big(\widetilde{h_\om}^{\frac{1}{2}}\big)\big)\Lambda(xh_\om^{\frac{1}{2}}).
 \end{aligned}
\end{equation}
Now we proceed as in the second part of the proof of Proposition \ref{Delta0}. For the positive cone $\{\Lambda(h^{\frac{1}{2}}):h\in L^1(\M,\tau), h\geq0\}$, we have
\begin{align*}
 &\langle\Lambda(h^{\frac{1}{2}})|\pi(h_\f^{\frac{1}{2}})\pi'\big(\widetilde{h_\om}^{\frac{1}{2}}\big)\Lambda(xh_\om^{\frac{1}{2}})\rangle_{\Ha}\\
 =&\langle\Lambda(h^{\frac{1}{2}})|\Lambda(h_\f^{\frac{1}{2}}x\s^\M(\om))\rangle_{\Ha}=\tau\big(h^{\frac{1}{2}}h_\f^{\frac{1}{2}}x\s^\M(\om)\big),
\end{align*}
and
\begin{align*}
 &\langle\Lambda(h^{\frac{1}{2}})|\Delta(\f,\om)^{\frac{1}{2}}\Lambda(xh_\om^{\frac{1}{2}})\rangle_{\Ha}
 =\overline{\langle J(\Lambda(h^{\frac{1}{2}}))|J\Delta(\f,\om)^{\frac{1}{2}}\Lambda(xh_\om^{\frac{1}{2}})\rangle}_{\Ha}\\
 =&\overline{\langle\Lambda(h^{\frac{1}{2}})|S\big(\Lambda\big(xh_\om^{\frac{1}{2}}\big)\big)\rangle}_{\Ha}
 =\langle\Lambda\big(\s^\M(\om)x^*h_\f^{\frac{1}{2}}\big)|\Lambda(h^{\frac{1}{2}})\rangle_{\Ha}\\
 =&\tau\big(h_\f^{\frac{1}{2}}x\s^\M(\om)h^{\frac{1}{2}}\big)=\tau\big(h^{\frac{1}{2}}h_\f^{\frac{1}{2}}x\s^\M(\om)\big).
\end{align*}
Consequently, repeating the reasoning as in Proposition \ref{Delta0}, we get
\[
 \pi(h_\f^{\frac{1}{2}})\pi'\big(\widetilde{h_\om}^{\frac{1}{2}}\big)\Lambda(xh_\om^{\frac{1}{2}})=\Delta(\f,\om)^{\frac{1}{2}}\Lambda(xh_\om^{\frac{1}{2}})
\]
for $x\in\M\cap L^2(\M,\tau)$. For $\xi\in\big(\1-\s^{\pi(\M)'}\big(\Lambda\big(h_\om^{\frac{1}{2}}\big)\big)\big)\Ha$, we have $S\xi=0$, and since $\Delta(\f,\om)^{\frac{1}{2}}=J\overline{S}$, we obtain
\[
 \Delta(\f,\om)^{\frac{1}{2}}\xi=0
\]
for such $\xi$. The formulae \eqref{s} yield
\[
 \s^{\pi(\M)'}\big(\Lambda\big(h_\om^{\frac{1}{2}}\big)\big)=\s^{\pi'(\M)}\big(\Lambda\big(h_\om^{\frac{1}{2}}\big)\big)=\pi'(\s^\M(\om))=\pi'(e((0,\infty))),
\]
and from the definition of $\pi'\big(\widetilde{h_\om}^{\frac{1}{2}}\big)$, it follows that
\[
 \pi'\big(\widetilde{h_\om}^{\frac{1}{2}}\big)\xi=0
\]
for $\xi\in\big(\1-\s^{\pi(\M)'}\big(\Lambda\big(h_\om^{\frac{1}{2}}\big)\big)\big)\Ha$. Our considerations can be summarised as follows
\begin{proposition'2}
Let $\widetilde{h_\om}$ be defined by the formula \eqref{h_om}. Then
\[
 \pi(h_\f^{\frac{1}{2}})\pi'\big(\widetilde{h_\om}^{\frac{1}{2}}\big)\big|\D(S_0)=\Delta(\f,\om)^{\frac{1}{2}}|\D(S_0).
\]
\end{proposition'2}
Now the remaining part of Section \ref{MO} can be rewritten word for word, the only change being in the replacement of the operator\\ $\pi'\big(h_\om^{-\frac{1}{2}}\big)$ by the operator $\pi'\big(\widetilde{h_\om}^{\frac{1}{2}}\big)$ (note that if $h_\om$ is invertible, i.e. if $\om$ is faithful, then we have simply $\widetilde{h_\om}^{\frac{1}{2}}=h_\om^{-\frac{1}{2}}$). In particular, for
\begin{equation}\tag{11'}
 \pi\big(h_\f^{\frac{1}{2}}\big)=\int_{-\infty}^\infty u(\lambda)\,m(d\lambda), \quad \pi'\Big(\widetilde{h_\om}^{\frac{1}{2}}\Big)=\int_{-\infty}^\infty v(\lambda)\,m(d\lambda),
\end{equation}
we have
\begin{theorem'}
Let $\pi(h_\f^{\frac{1}{2}})$ and $\pi'\Big(\widetilde{h_\om}^{\frac{1}{2}}\Big)$ be represented by the formula (11'). Then
\begin{equation}\tag{12'}
 \Delta(\f,\om)^{\frac{1}{2}}=\overline{\pi\big(h_\f^{\frac{1}{2}}\big)\pi'\Big(\widetilde{h_\om}^{\frac{1}{2}}\Big)}=\int_{-\infty}^\infty u(\lambda)v(\lambda)\,m(d\lambda).
\end{equation}
\end{theorem'}
From the considerations above, we obtain the following result.
\begin{lemma}\label{Delta-pi}
For arbitrary $s>0$, the following formula holds
\[
 \pi\big(h_\f^s\big)\pi'\Big(\widetilde{h_\om}^s\Big)\subset\Delta(\f,\om)^s.
\]
\end{lemma}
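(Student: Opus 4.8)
The plan is to read the inclusion off from Theorem 8' and the functional-calculus identities \eqref{pi1} and \eqref{pi'1}, mimicking the argument that produced Theorem~\ref{Delta1}. Recall that, by the formula (12'), the operators $\pi(h_\f^{\frac12})$ and $\pi'(\widetilde{h_\om}^{\frac12})$ are represented through one common spectral measure $m$ by nonnegative Borel functions $u$ and $v$, and that
\[
 \Delta(\f,\om)^{\frac12}=\int_{-\infty}^\infty u(\lambda)v(\lambda)\,m(d\lambda).
\]
Squaring this positive selfadjoint operator and then applying the scalar functional calculus with $t\mapsto t^s$ on $[0,\infty)$, I would obtain
\[
 \Delta(\f,\om)^s=\int_{-\infty}^\infty u(\lambda)^{2s}v(\lambda)^{2s}\,m(d\lambda).
\]

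Next I would identify the two operators on the left-hand side of the claim with spectral integrals against the same $m$. Since $h_\f\geq0$ is measurable, so is $h_\f^s=(h_\f^{\frac12})^{2s}$, and it is affiliated with $\M$; applying \eqref{pi1} with the Borel function $t\mapsto|t|^{2s}$ and then the functional calculus for the positive operator $\pi(h_\f^{\frac12})$ gives
\[
 \pi\big(h_\f^s\big)=\pi\big(h_\f^{\frac12}\big)^{2s}=\int_{-\infty}^\infty u(\lambda)^{2s}\,m(d\lambda).
\]
In exactly the same way, using \eqref{pi'1} and $\widetilde{h_\om}^s=(\widetilde{h_\om}^{\frac12})^{2s}$,
\[
 \pi'\big(\widetilde{h_\om}^s\big)=\pi'\big(\widetilde{h_\om}^{\frac12}\big)^{2s}=\int_{-\infty}^\infty v(\lambda)^{2s}\,m(d\lambda).
\]

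Finally, $\pi(h_\f^s)$ and $\pi'(\widetilde{h_\om}^s)$ are commuting positive selfadjoint operators built from the single spectral measure $m$, so --- just as the inclusion $\pi(h_\f^{\frac12})\pi'(h_\om^{-\frac12})\subset A$ was obtained right before Theorem~\ref{Delta1} --- the natural domain of the product $\pi(h_\f^s)\pi'(\widetilde{h_\om}^s)$ is contained in the domain of $\int u^{2s}v^{2s}\,m(d\lambda)$, and the two operators coincide there. Hence
\[
 \pi\big(h_\f^s\big)\pi'\big(\widetilde{h_\om}^s\big)\subset\int_{-\infty}^\infty u(\lambda)^{2s}v(\lambda)^{2s}\,m(d\lambda)=\Delta(\f,\om)^s,
\]
which is the assertion. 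I do not expect a genuine obstacle: the only slightly delicate point is the domain bookkeeping for the product of two unbounded (but commuting) operators, and that is entirely routine once everything is written in terms of $m$; all the rest is a direct application of (12') together with \eqref{pi1} and \eqref{pi'1}.
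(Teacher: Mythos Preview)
Your proposal is correct and follows essentially the same route as the paper: write $\pi(h_\f^s)$, $\pi'(\widetilde{h_\om}^s)$, and $\Delta(\f,\om)^s$ as the spectral integrals $\int u^{2s}\,dm$, $\int v^{2s}\,dm$, $\int u^{2s}v^{2s}\,dm$ via (11') and (12'), and then read off the inclusion from the functional calculus for the common spectral measure $m$. The only difference is that you spell out the intermediate justification $\pi(h_\f^s)=\pi(h_\f^{1/2})^{2s}$ via \eqref{pi1} and \eqref{pi'1}, whereas the paper simply states the resulting spectral representations.
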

\begin{proof}
Indeed, from the formulae (11') and (12') we get
\[
 \pi\big(h_\f^s\big)=\int_{-\infty}^\infty u^{2s}(\lambda)\,m(d\lambda), \quad \pi'\Big(\widetilde{h_\om}^s\Big)=\int_{-\infty}^\infty v^{2s}(\lambda)\,m(d\lambda),
\]
and
\[
 \Delta(\f,\om)^s=\int_{-\infty}^\infty u^{2s}(\lambda)v^{2s}(\lambda)\,m(d\lambda),
\]
thus
\[
 \pi\big(h_\f^s\big)\pi'\Big(\widetilde{h_\om}^s\Big)\subset\int_{-\infty}^\infty u^{2s}(\lambda)v^{2s}(\lambda)\,m(d\lambda)=\Delta(\f,\om)^s. \qedhere
\]
\end{proof}
\begin{lemma}\label{p}
There are projections $e_n\in\M$ such that $e_n\to\1$ \emph{strongly}, $\tau(e_n)<+\infty$, and $e_nh_\f=h_\f e_n$.
\end{lemma}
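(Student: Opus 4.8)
The plan is to obtain $e_n$ by splitting $\1$ into the support projection $p=\s(\f)$ of the density $h_\f$ and its orthogonal complement $\1-p$, and by treating the two pieces by different means: on the range of $p$ the operator $h_\f$ is bounded below on each of its high-spectral subspaces, so its own spectral projections there have finite trace; on the range of $\1-p$ the operator $h_\f$ vanishes, so \emph{every} subprojection of $\1-p$ commutes with it. Write the spectral decomposition $h_\f=\int_0^\infty\lambda\,e(d\lambda)$, with $e(\cdot)$ taking values in $\M$; then $p=e((0,\infty))$ and $h_\f=h_\f p=ph_\f$.

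For the support part I put $p_n=e\big([\tfrac1n,\infty)\big)$. These are projections in $\M$ commuting with $h_\f$, and $p_n\uparrow p$ \emph{strongly} because $e\big([0,\tfrac1n)\big)\to e(\{0\})=\1-p$ strongly. The only place the hypothesis $h_\f\in L^1(\M,\tau)$ enters is the trace bound: from $h_\f\geq\int_{1/n}^\infty\lambda\,e(d\lambda)\geq\tfrac1n\,p_n$ and monotonicity of $\tau$ one gets $\tau(p_n)\leq n\,\tau(h_\f)<\infty$.

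For the kernel part I repeat the construction from the proof of Lemma~\ref{sstrong}: by semifiniteness of $\tau$ one may write $\1-p=\sum_i q_i$ with $\{q_i\}$ pairwise orthogonal finite-trace projections in $\M$; taking an increasing exhaustion of the index set by finite subsets, the partial sums $q^{(n)}$ are finite-trace projections with $q^{(n)}\uparrow\1-p$ strongly. Since $q^{(n)}\leq\1-p$ and $h_\f=h_\f p=ph_\f$, we have $h_\f q^{(n)}=0=q^{(n)}h_\f$, so $q^{(n)}$ commutes with $h_\f$. Now set $e_n=p_n+q^{(n)}$: because $p_n\leq p$ and $q^{(n)}\leq\1-p$ are orthogonal, $e_n$ is a projection in $\M$; $\tau(e_n)=\tau(p_n)+\tau(q^{(n)})<\infty$; both summands commute with $h_\f$, whence $e_nh_\f=h_\f e_n$; and $e_n\to p+(\1-p)=\1$ strongly.

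Most of this is routine; the point that requires care is arranging the three demands on $e_n$ --- finite trace, commutation with the (possibly unbounded and non-injective) operator $h_\f$, and strong convergence to $\1$ --- all at once, since these conflict (finiteness wants small projections, convergence to $\1$ wants large ones, commutation is delicate near the kernel of $h_\f$). The support/kernel decomposition is exactly what reconciles them, with the $L^1$-Chebyshev estimate doing the work on $p$ and the semifinite exhaustion doing it on $\1-p$. (If $\M$ is not countably decomposable, $\{q_i\}$ is uncountable and ``$e_n$'' is literally a net; this is harmless for the later applications.)
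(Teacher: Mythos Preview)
Your proof is correct and is essentially identical to the paper's: both split $\1$ as $\s(h_\f)+e(\{0\})$, use the spectral projections $e([1/n,\infty))$ together with the Chebyshev-type bound $\tau(e([1/n,\infty)))\leq n\,\tau(h_\f)$ on the support side, and approximate $e(\{0\})$ by finite-trace subprojections (which automatically annihilate $h_\f$) on the kernel side. Your version is in fact a bit more explicit about why the kernel-side projections commute with $h_\f$ and about the countability caveat, but the argument is the same.
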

\begin{proof}
Let
\[
 h_\f=\int_0^\infty\lambda\,e(d\lambda)
\]
be the spectral decomposition of $h_\f$. We have
\[
 h_\f\geq\int_{1/n}^\infty\lambda\,e(d\lambda)\geq\frac{1}{n}e\Big(\Big[\frac{1}{n}<+\infty\Big)\Big),
\]
and since $\tau(h_\f)<+\infty$, we infer that $\tau\big(e\big(\big[\frac{1}{n},+\infty\big)\big)\big)<+\infty$. Moreover, $e\big(\big[\frac{1}{n},+\infty\big)\big)\to e((0,+\infty))=\s(h_\f)$. For the null projection $e(\{0\})$, we choose projections $p_n\in\M$ such that $p_n\leq e(\{0\})$,\\ $p_n\to e(\{0\})$, and $\tau(p_n)<+\infty$ (of course, if $\tau(e(\{0\})<+\infty$, we can take $p_n=e(\{0\})$). Putting
\[
 e_n=e\Big(\Big[\frac{1}{n},+\infty\Big)\Big)+p_n,
\]
we obtain the conclusion.
\end{proof}
Recall that for a selfadjoint positive operator $h$, we have
\[
 h^0=\s(h),
\]
where $\s(h)$ is the support of $h$.

The following result is crucial in our further considerations.
\begin{proposition}
For arbitrary $0\leq s\leq\frac{1}{2}$, the following equality holds
\begin{equation}\label{basic}
 \Delta(\f,\om)^s\Lambda\big(h_\om^{\frac{1}{2}}\big)=\Lambda\big(h_\f^sh_\om^{\frac{1}{2}-s}\big).
\end{equation}
\end{proposition}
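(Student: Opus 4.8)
The plan is to reduce the claimed equality to the case $s=\tfrac12$ together with an interpolation-type argument, using Lemma~\ref{Delta-pi} as the starting point. First I would observe that $\Lambda\big(h_\om^{\frac12}\big)$ lies in $\D\big(\pi'\big(\widetilde{h_\om}^s\big)\big)$ for every $0\le s\le\tfrac12$; indeed, with the spectral decomposition $h_\om=\int_0^\infty\lambda\,e(d\lambda)$ we have $\pi'\big(h_\om^{1/2}\big)\Lambda\big(h_\om^{1/2}\big)=\Lambda(h_\om)$, so the measure $\lambda\mapsto\|\pi'(e(d\lambda))\Lambda(h_\om^{1/2})\|^2$ has finite integral against $\lambda$, hence a fortiori against $w(\lambda)^{2s}\lambda^{2s}\le$ const on the relevant range, giving $\Lambda\big(h_\om^{1/2}\big)\in\D\big(\pi'\big(\widetilde{h_\om}^s\big)\big)$ with
\[
 \pi'\Big(\widetilde{h_\om}^s\Big)\Lambda\big(h_\om^{\tfrac12}\big)=\Lambda\big(\s(\om)h_\om^{\tfrac12-s}\big)=\Lambda\big(h_\om^{\tfrac12-s}\big),
\]
the last equality because $\s(\om)h_\om^{1/2-s}=h_\om^{1/2-s}$. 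Next, since $h_\om^{1/2-s}\in\M\cap L^2(\M,\tau)$ is a measurable operator affiliated with $\M$ and $h_\f^s$ is selfadjoint affiliated with $\M$, Proposition~\ref{P} (applied in the representation $\pi$) shows that $h_\f^s h_\om^{1/2-s}\in L^2(\M,\tau)$ provided $\Lambda\big(h_\om^{1/2-s}\big)\in\D\big(\pi\big(h_\f^s\big)\big)$, and in that case $\pi\big(h_\f^s\big)\Lambda\big(h_\om^{1/2-s}\big)=\Lambda\big(h_\f^s h_\om^{1/2-s}\big)$.

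The core of the argument is therefore: (a) establish that $\Lambda\big(h_\om^{1/2}\big)\in\D\big(\Delta(\f,\om)^s\big)$ for $0\le s\le\tfrac12$, and (b) identify the value. For (b), once we know $\Lambda\big(h_\om^{1/2}\big)\in\D\big(\pi'\big(\widetilde{h_\om}^s\big)\big)$ and $\Lambda\big(h_\om^{1/2-s}\big)=\pi'\big(\widetilde{h_\om}^s\big)\Lambda\big(h_\om^{1/2}\big)\in\D\big(\pi\big(h_\f^s\big)\big)$, Lemma~\ref{Delta-pi} gives immediately
\[
 \Delta(\f,\om)^s\Lambda\big(h_\om^{\tfrac12}\big)=\pi\big(h_\f^s\big)\pi'\Big(\widetilde{h_\om}^s\Big)\Lambda\big(h_\om^{\tfrac12}\big)=\pi\big(h_\f^s\big)\Lambda\big(h_\om^{\tfrac12-s}\big)=\Lambda\big(h_\f^s h_\om^{\tfrac12-s}\big),
\]
which is exactly \eqref{basic}. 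So everything hinges on the two domain memberships. The membership $\Lambda\big(h_\om^{1/2}\big)\in\D\big(\pi'\big(\widetilde{h_\om}^s\big)\big)$ was handled above. For $\Lambda\big(h_\om^{1/2-s}\big)\in\D\big(\pi\big(h_\f^s\big)\big)$ I would use the projections $e_n$ from Lemma~\ref{p}: they commute with $h_\f$, satisfy $\tau(e_n)<\infty$, $e_n\to\1$ strongly, so $e_n h_\om^{1/2-s}\in\M\cap L^2(\M,\tau)$ lies in $\D\big(\pi\big(h_\f^s\big)\big)$ with $\pi\big(h_\f^s\big)\Lambda\big(e_n h_\om^{1/2-s}\big)=\Lambda\big(h_\f^s e_n h_\om^{1/2-s}\big)=\Lambda\big(e_n h_\f^s h_\om^{1/2-s}\big)$; then I must check that $h_\f^s h_\om^{1/2-s}\in L^2(\M,\tau)$ a priori (by the H\"older inequality: $h_\f^s\in L^{1/s}$, $h_\om^{1/2-s}\in L^{1/(1/2-s)}$, and $s+(1/2-s)=1/2$, so the product is in $L^2$) and that $\Lambda\big(e_n h_\om^{1/2-s}\big)\to\Lambda\big(h_\om^{1/2-s}\big)$ and $\Lambda\big(e_n h_\f^s h_\om^{1/2-s}\big)\to\Lambda\big(h_\f^s h_\om^{1/2-s}\big)$ in $\Ha$; closedness of $\pi\big(h_\f^s\big)$ then yields the claim.

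The main obstacle I anticipate is the boundary case $s=0$ and, more subtly, making the H\"older bookkeeping for the $L^2$-membership of $h_\f^s h_\om^{1/2-s}$ completely rigorous when $s$ or $\tfrac12-s$ equals $0$ (then one of the exponents is $\infty$ and the factor is in $\M$, so one appeals to the obvious inclusion $\M\cdot L^2\subset L^2$ rather than H\"older). A secondary delicate point is justifying the convergence $\Lambda\big(e_n h_\f^s h_\om^{1/2-s}\big)\to\Lambda\big(h_\f^s h_\om^{1/2-s}\big)$ in $\|\cdot\|_2$-norm: since $e_n\to\1$ strongly and $h_\f^s h_\om^{1/2-s}\in L^2$, one writes $\big\|(\1-e_n)a\big\|_2^2=\tau\big(a^*(\1-e_n)a\big)\to0$ by normality of $\tau$ applied to the increasing net $e_n$ against the trace-finite positive functional $\tau(a^*\cdot a)$, with $a=h_\f^s h_\om^{1/2-s}$; this is routine but needs the observation that $x\mapsto\tau(a^*xa)$ is a normal positive functional on $\M$ when $a\in L^2(\M,\tau)$. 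Everything else is a direct assembly of Proposition~\ref{P}, Lemma~\ref{Delta-pi}, Lemma~\ref{p}, and the H\"older inequality.
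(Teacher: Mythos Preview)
There is a genuine gap. Your plan hinges on the identity
\[
 \pi'\Big(\widetilde{h_\om}^{\,s}\Big)\Lambda\big(h_\om^{1/2}\big)=\Lambda\big(h_\om^{1/2-s}\big),
\]
but for $0<s<\tfrac12$ the right-hand side need not exist: $h_\om\in L^1(\M,\tau)$ only gives $h_\om^{1/2-s}\in L^{2/(1-2s)}(\M,\tau)$, which in general is \emph{not} contained in $L^2(\M,\tau)$. Concretely, for $\M=B(\Ha)$ with the canonical trace and $h_\om$ with eigenvalues $\beta_n=1/\big(n(\log n)^2\big)$ one has $\sum_n\beta_n<\infty$ but $\sum_n\beta_n^{1-2s}=\infty$ for every $s>0$, so $h_\om^{1/2-s}\notin L^2$. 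Equivalently, $\Lambda\big(h_\om^{1/2}\big)\notin\D\big(\pi'\big(\widetilde{h_\om}^{\,s}\big)\big)$. Your justification (``finite integral against $\lambda$, hence against $w(\lambda)^{2s}\lambda^{2s}$'') does not yield the required $\int_0^\infty\lambda^{-2s}\,\mu(d\lambda)<\infty$; the singularity at $\lambda=0$ is uncontrolled. For the same reason the convergence $\Lambda\big(e_nh_\om^{1/2-s}\big)\to\Lambda\big(h_\om^{1/2-s}\big)$ you plan to use has no target in~$\Ha$.

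The paper avoids this by never forming the intermediate vector $\Lambda\big(h_\om^{1/2-s}\big)$. It first proves, for $x\in\M\cap L^2(\M,\tau)$, the formula $\Delta(\f,\om)^s\Lambda\big(xh_\om^{1/2}\big)=\Lambda\big(h_\f^sxh_\om^{1/2-s}\big)$ (here $xh_\om^{1/2-s}\in L^2$ because of the extra bounded, $L^2$ factor $x$). Then it takes $x=e_n$ with $e_n$ commuting with $h_\f$, so that
\[
 \Delta(\f,\om)^s\Lambda\big(e_nh_\om^{1/2}\big)=\Lambda\big(h_\f^se_nh_\om^{1/2-s}\big)=\Lambda\big(e_n\,h_\f^sh_\om^{1/2-s}\big)\to\Lambda\big(h_\f^sh_\om^{1/2-s}\big),
\]
the last convergence holding because $h_\f^sh_\om^{1/2-s}\in L^2$ by H\"older (this is the quantity that \emph{is} in $L^2$). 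One then invokes closedness of $\Delta(\f,\om)^s$, not of $\pi\big(h_\f^s\big)$, to conclude. The fix to your argument is therefore to carry the factor $e_n$ through the whole composite $\pi\big(h_\f^s\big)\pi'\big(\widetilde{h_\om}^{\,s}\big)\subset\Delta(\f,\om)^s$ before passing to the limit, rather than splitting the two factors at $\Lambda\big(h_\om^{1/2}\big)$.
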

\begin{proof}
(Note that since $\Lambda\big(h_\om^{\frac{1}{2}}\big)\in\D(\Delta(\f,\om)^{\frac{1}{2}})$ and $s\leq\frac{1}{2}$, we have $\Lambda\big(h_\om^{\frac{1}{2}}\big)\in\D(\Delta(\f,\om)^s)$.)

Assume first that $0<s<\frac{1}{2}$.

Observe that for every $0\leq r\leq\frac{1}{2}$ and every $x\in\M\cap L^2(\M,\tau)$, we have $xh_\om^r\in L^2(\M,\tau)$. Indeed, let
\[
 h_\om=\int_0^\infty\lambda\,e(d\lambda)
\]
be the spectral decomposition of $h_\om$. Then
\begin{align*}
 &xh^r\big(xh^r\big)^*=xh^{2r}x^*=x\Big(\int_0^1\lambda^{2r}\,e(d\lambda)\Big)x^*+x\Big(\int_1^\infty\lambda^{2r}\,e(d\lambda)\Big)x^*\\
 \leq&xe([0,1]x^*+x\Big(\int_1^\infty\lambda\,e(d\lambda)\Big)x^*\leq xx^*+xh_\om x^*\in L^1(\M,\tau).
\end{align*}
Consequently, for every $x\in\M\cap L^2(\M,\tau)$, $\Lambda\big(xh_\om^{\frac{1}{2}}\big)$ belongs to the domain of $\pi'\big(\widetilde{h_\om}^s\big)$ and
\[
 \pi'\big(\widetilde{h_\om}^s\big)\Lambda\big(xh_\om^{\frac{1}{2}}\big)=\Lambda\big(xh_\om^{\frac{1}{2}-s}\big).
\]
Further, we have $h_\om^{\frac{1}{2}-s}\in L^{\frac{2}{1-2s}}(\M,\tau)$, thus $xh_\om^{\frac{1}{2}-s}\in L^{\frac{2}{1-2s}}(\M,\tau)$, and since $h_\f^s\in L^{\frac{1}{s}}$, we obtain that $h_\f^sxh_\om^{\frac{1}{2}-s}\in L^2(\M,\tau)$ which means that $\Lambda\big(xh_\om^{\frac{1}{2}-s}\big)$ belongs to the domain of $\pi\big(h_\f^s\big)$ and
\[
 \pi\big(h_\f^s\big)\Lambda\big(xh_\om^{\frac{1}{2}-s}\big)=\Lambda\big(h_\f^sxh_\om^{\frac{1}{2}-s}\big).
\]
From the considerations above and Lemma \ref{Delta-pi}, we obtain that for every $x\in\M\cap L^2(\M,\tau)$,
\begin{equation}\label{Delta2}
  \Delta(\f,\om)^s\Lambda\big(xh_\om^{\frac{1}{2}}\big)=\pi\big(h_\f^s\big)\pi'\big(\widetilde{h_\om}^s\big)\Lambda\big(xh_\om^{\frac{1}{2}}\big)
  =\Lambda\big(h_\f^sxh_\om^{\frac{1}{2}-s}\big).
\end{equation}
Choose projections $e_n\in\M$ as in Lemma \ref{p}, i.e. such that $e_n\to\1$ \emph{strongly}, $\tau(e_n)<+\infty$, and $e_nh_\f=h_\f e_n$. We have
\[
 \|\Lambda\big(e_nh_\om^{\frac{1}{2}}\big)-\Lambda\big(h_\om^{\frac{1}{2}}\big)\|_\Ha^2=\tau\big(h_\om^{\frac{1}{2}}(\1-e_n)h_\om^{\frac{1}{2}}\big)=\om(\1-e_n)\to0,
\]
which means that
\[
 \Lambda\big(e_nh_\om^{\frac{1}{2}}\big)\to\Lambda\big(h_\om^{\frac{1}{2}}\big) \quad \textit{in norm}
\]
(of course, here we have used only the convergence $e_n\to\1$), and
\begin{align*}
 &\|\Lambda\big(h_\f^se_nh_\om^{\frac{1}{2}-s}\big)-\Lambda\big(h_\f^sh_\om^{\frac{1}{2}-s}\big)\|_\Ha^2=\|\Lambda\big(h_\f^s(e_n-\1)h_\om^{\frac{1}{2}-s}\|_\Ha^2\\
 =&\tau\big(h_\om^{\frac{1}{2}-s}(e_n-\1)h_\f^{2s}(e_n-\1)h_\om^{\frac{1}{2}-s}\big)=\tau\big(h_\om^{\frac{1}{2}-s}h_\f^s(\1-e_n)h_\f^sh_\om^{\frac{1}{2}-s}\big)\\
 =&\tau\big(h_\f^sh_\om^{1-2s}h_\f^s(\1-e_n)\big),
\end{align*}
where the change of order under the trace in the last equality follows from the fact that $h_\f^sh_\om^{\frac{1}{2}-s}, h_\om^{\frac{1}{2}-s}h_\f^s(\1-e_n)\in L^2(\M,\tau)$. Since $h_\f^sh_\om^{1-2s}h_\f^s\in L^1(\M,\tau)$, there is $\ro\in\M_*$ such that for every $x\in\M$ we have
\[
 \tau\big(h_\f^sh_\om^{1-2s}h_\f^sx\big)=\ro(x).
\]
Consequently, we obtain
\begin{align*}
 \|\Lambda\big(h_\f^se_nh_\om^{\frac{1}{2}-s}\big)-\Lambda\big(h_\f^sh_\om^{\frac{1}{2}-s}\big)\|_\Ha^2&=\tau\big(h_\f^sh_\om^{1-2s}h_\f^s(\1-e_n)\big)\\
 &=\ro(\1-e_n)\to0,
\end{align*}
which means that
\[
 \Lambda\big(h_\f^se_nh_\om^{\frac{1}{2}-s}\big)\to\Lambda\big(h_\f^sh_\om^{\frac{1}{2}-s}\big) \quad \textit{in norm}.
\]
Further, we have
\[
 \D\big(\Delta(\f,\om)^s\big)\ni\Lambda\big(e_nh_\om^{\frac{1}{2}}\big)\to\Lambda\big(h_\om^{\frac{1}{2}}\big),
\]
and, putting $x=e_n$ in the formula \eqref{Delta2}, we get
\[
 \Delta(\f,\om)^s\Lambda\big(e_nh_\om^{\frac{1}{2}}\big)=\Lambda\big(h_\f^se_nh_\om^{\frac{1}{2}-s}\big)\to\Lambda\big(h_\f^sh_\om^{\frac{1}{2}-s}\big).
\]
Since $\Delta(\f,\om)^s$ is closed, it follows that the formula \eqref{basic} holds.

Now for $s=\frac{1}{2}$, the formula \eqref{basic} follows directly from the formula \eqref{bas}.

For $s=0$, we have on account of \cite[Theorem 2.4]{A1} and the formulae \eqref{s}, \eqref{s1}
\begin{align*}
 \Delta(\f,\om)^0=\s(\Delta(\f,\om))&=\s^{\pi(\M)'}\big(\Lambda\big(h_\om^{\frac{1}{2}}\big)\big)\s^{\pi(M)}\big(\Lambda\big(h_\f^{\frac{1}{2}}\big)\big)\\
 &=\pi'\big(\s^\M(\om)\big)\pi\big(\s^\M(\f)\big),
\end{align*}
and thus
\begin{align*}
 \Delta(\f,\om)^0\Lambda\big(h_\om^{\frac{1}{2}}\big)&=\pi'\big(\s^\M(\om)\big)\pi\big(\s^\M(\f)\big)\Lambda\big(h_\om^{\frac{1}{2}}\big)\\
 &=\Lambda\big(\s^\M(\f)h_\om^{\frac{1}{2}}\s^\M(\om)\big)=\Lambda\big(\s^\M(\f)h_\om^{\frac{1}{2}}\big)=\Lambda\big(h_\f^0h_\om^{\frac{1}{2}}\big),
\end{align*}
which finishes the proof.
\end{proof}
Observe that as a direct consequence of the proposition above, we obtain Theorem \ref{tau} which allows us to change order under the trace for arbitrary elements belonging to $L^p(\M,\tau)$ and $L^q(\M,\tau)$, respectively. This will be commented on in Section \ref{RE-I} where some applications of our results about the relative modular operator are presented. At the moment, we shall exploit this property in order to obtain a more general version of the formula \eqref{basic}. We start with the following technical lemma.
\begin{lemma}\label{est}
Let $z\in\M$. The following estimates hold true:
\begin{enumerate}
 \item[(i)] for $0<s\leq\frac{1}{4}$
  \[
   \tau\big(h_\om^{\frac{1}{2}-s}z^*h_\f^{2s}zh_\om^{\frac{1}{2}-s}\big)\leq\|h_\f\|_1^{2s}\|h_\om\|_1^{\frac{1}{2}-2s}\|z\|_\infty\|zh_\om^{\frac{1}{2}}\|_2,
  \]
 \item[(ii)] for $\frac{1}{4}<s\leq\frac{1}{2}$
  \[
   \tau\big(h_\om^{\frac{1}{2}-s}z^*h_\f^{2s}zh_\om^{\frac{1}{2}-s}\big)\leq\|h_\f\|_1^{2s-\frac{1}{2}}\|h_\om\|_1^{1-2s}\|z\|_\infty\|h_\f^{\frac{1}{2}}z\|_2.
  \]
\end{enumerate}
\end{lemma}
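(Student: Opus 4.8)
The plan is to reduce each of the two estimates to a single application of the noncommutative H\"older inequality, preceded by a cyclic rearrangement under the trace. The guiding observation is this: after merging the two outer factors $h_\om^{\frac{1}{2}-s}$ into $h_\om^{1-2s}$ (legitimate for these possibly unbounded operators by Theorem~\ref{tau}), one is free to split one of the positive powers $h_\om^{1-2s}$ or $h_\f^{2s}$ into a product of two commuting powers, and choosing \emph{where} to break it so as to expose the block $zh_\om^{\frac{1}{2}}$ (when $s$ is small) or the block $h_\f^{\frac{1}{2}}z$ (when $s$ is larger) is exactly what forces the two ranges $0<s\leq\frac{1}{4}$ and $\frac{1}{4}<s\leq\frac{1}{2}$. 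Throughout I use that for $h\in L^1(\M,\tau)^+$ and $0<t\leq1$ one has $h^t\in L^{1/t}(\M,\tau)$ with $\|h^t\|_{1/t}=\tau(h)^t=\|h\|_1^t$, that $h^0=\s(h)\in\M$, and that a cyclic permutation under $\tau$ is licit whenever the operators involved lie in $L^p$-spaces whose reciprocal exponents sum to at most $1$.

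For part (i) I would take $0<s\leq\frac{1}{4}$, so that $\frac{1}{2}-2s\geq0$, the positive operator $h_\om^{\frac{1}{2}-2s}$ is well defined, and $h_\om^{1-2s}=h_\om^{\frac{1}{2}}h_\om^{\frac{1}{2}-2s}$. Cyclicity of the trace (the two $h_\om^{\frac{1}{2}-s}$ sit at the ends of the product) followed by this factorisation gives
\[
 \tau\big(h_\om^{\frac{1}{2}-s}z^*h_\f^{2s}zh_\om^{\frac{1}{2}-s}\big)=\tau\big(z^*h_\f^{2s}zh_\om^{1-2s}\big)=\tau\big(\big(h_\om^{\frac{1}{2}-2s}z^*h_\f^{2s}\big)\big(zh_\om^{\frac{1}{2}}\big)\big).
\]
Since $h_\om^{\frac{1}{2}-2s}$, $z^*$, $h_\f^{2s}$ lie respectively in $L^{p_1}(\M,\tau)$, $\M$, $L^{p_2}(\M,\tau)$ with $\frac{1}{p_1}=\frac{1}{2}-2s$, $\frac{1}{p_2}=2s$, and $\frac{1}{p_1}+0+\frac{1}{p_2}=\frac{1}{2}$, H\"older yields $h_\om^{\frac{1}{2}-2s}z^*h_\f^{2s}\in L^2(\M,\tau)$ with $\big\|h_\om^{\frac{1}{2}-2s}z^*h_\f^{2s}\big\|_2\leq\|h_\om\|_1^{\frac{1}{2}-2s}\|z\|_\infty\|h_\f\|_1^{2s}$; since also $zh_\om^{\frac{1}{2}}\in L^2(\M,\tau)$, the bound $\big|\tau(ab)\big|\leq\|ab\|_1\leq\|a\|_2\|b\|_2$ applied to the last display with $a=h_\om^{\frac{1}{2}-2s}z^*h_\f^{2s}$ and $b=zh_\om^{\frac{1}{2}}$ gives exactly estimate~(i). (At the endpoint $s=\frac{1}{4}$ the operator $h_\om^{\frac{1}{2}-2s}$ reduces to $\s(h_\om)\in\M$ and the bound goes through with $\M$ in place of $L^{p_1}(\M,\tau)$.)

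For part (ii) I would take $\frac{1}{4}<s\leq\frac{1}{2}$, so that $2s-\frac{1}{2}>0$ and $h_\f^{2s}=h_\f^{2s-\frac{1}{2}}h_\f^{\frac{1}{2}}$, and argue symmetrically, now exposing the block $h_\f^{\frac{1}{2}}z$:
\[
 \tau\big(h_\om^{\frac{1}{2}-s}z^*h_\f^{2s}zh_\om^{\frac{1}{2}-s}\big)=\tau\big(z^*h_\f^{2s-\frac{1}{2}}h_\f^{\frac{1}{2}}zh_\om^{1-2s}\big)=\tau\big(\big(h_\f^{\frac{1}{2}}z\big)\big(h_\om^{1-2s}z^*h_\f^{2s-\frac{1}{2}}\big)\big).
\]
Here $h_\om^{1-2s}$, $z^*$, $h_\f^{2s-\frac{1}{2}}$ lie in $L^{q_1}(\M,\tau)$, $\M$, $L^{q_2}(\M,\tau)$ with $\frac{1}{q_1}=1-2s$, $\frac{1}{q_2}=2s-\frac{1}{2}$, and $\frac{1}{q_1}+0+\frac{1}{q_2}=\frac{1}{2}$, so $h_\om^{1-2s}z^*h_\f^{2s-\frac{1}{2}}\in L^2(\M,\tau)$ with norm at most $\|h_\om\|_1^{1-2s}\|z\|_\infty\|h_\f\|_1^{2s-\frac{1}{2}}$; combining this with $\big\|h_\f^{\frac{1}{2}}z\big\|_2$ exactly as before yields estimate~(ii). (At $s=\frac{1}{2}$ one has $h_\om^{1-2s}=\s(h_\om)\in\M$.)

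I expect the only real work to be the bookkeeping: keeping the H\"older exponents straight, and checking at each cyclic rearrangement that the reciprocal exponents of the operators being permuted sum to at most $1$ so that Theorem~\ref{tau} applies --- this is where the hypothesis $s\leq\frac{1}{2}$ enters, and where admissibility of the splittings $h_\om^{1-2s}=h_\om^{\frac{1}{2}}h_\om^{\frac{1}{2}-2s}$ and $h_\f^{2s}=h_\f^{2s-\frac{1}{2}}h_\f^{\frac{1}{2}}$ (requiring $\frac{1}{2}-2s\geq0$, resp.\ $2s-\frac{1}{2}\geq0$) forces the case division at $s=\frac{1}{4}$. Once the right factorisation is in place, each estimate is a one-line application of H\"older, so I do not anticipate any genuine obstacle beyond this routine verification.
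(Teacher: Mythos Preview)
Your proof is correct and follows the same plan as the paper's: cycle the trace to merge the two outer factors into $h_\om^{1-2s}$, split the appropriate power ($h_\om^{1-2s}=h_\om^{1/2}h_\om^{1/2-2s}$ for $s\leq\tfrac14$, $h_\f^{2s}=h_\f^{2s-1/2}h_\f^{1/2}$ for $s>\tfrac14$) so as to expose the block $zh_\om^{1/2}$ or $h_\f^{1/2}z$, and finish with H\"older. The only difference is that the paper, after the first cycling, first extracts $\|z\|_\infty$ via the bound $\tau(h_\f^{2s}zh_\om^{1-2s}z^*)\leq\|z\|_\infty\,\|h_\f^{2s}zh_\om^{1-2s}\|_1$ and then rewrites the $L^1$-norm through the polar decomposition $h_\f^{2s}zh_\om^{1-2s}=u|h_\f^{2s}zh_\om^{1-2s}|$, carrying the partial isometry $u^*$ along in the subsequent H\"older estimates; you bypass this detour by absorbing $\|z\|_\infty$ directly into the three-factor H\"older bound on $\|h_\om^{1/2-2s}z^*h_\f^{2s}\|_2$ (respectively $\|h_\om^{1-2s}z^*h_\f^{2s-1/2}\|_2$). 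Your route is a little cleaner, but the underlying idea and the final bounds are identical.
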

\begin{proof}
We have $h_\om^{\frac{1}{2}-s}z^*\in L^{\frac{2}{1-2s}}(\M,\tau)$, $h_\f^{2s}zh_\om^{\frac{1}{2}-s}\in L^{\frac{2}{1+2s}}(\M,\tau)$, hence
\begin{equation}\label{h1}
 \begin{aligned}
  &\tau\big(h_\om^{\frac{1}{2}-s}z^*h_\f^{2s}zh_\om^{\frac{1}{2}-s}\big)=\tau\big(h_\f^{2s}zh_\om^{1-2s}z^*\big)\\
  \leq&\|z\|_\infty\tau\big(|h_\f^{2s}zh_\om^{1-2s}|\big)
  =\|z\|_\infty\tau\big(u^*h_\f^{2s}zh_\om^{1-2s}\big),
 \end{aligned}
\end{equation}
where
\[
 h_\f^{2s}zh_\om^{1-2s}=u|h_\f^{2s}zh_\om^{1-2s}|
\]
is the polar decomposition.

(i) Let $0<s<\frac{1}{4}$. Then
\[
 \tau\big(u^*h_\f^{2s}zh_\om^{1-2s}\big)=\tau\big(u^*h_\f^{2s}zh_\om^{\frac{1}{2}}h_\om^{\frac{1}{2}-2s}\big)=\tau\big(h_\om^{\frac{1}{2}-2s}u^*h_\f^{2s}zh_\om^{\frac{1}{2}}\big).
\]
Since $h_\om^{\frac{1}{2}-2s}u^*h_\f^{2s},zh_\om^{\frac{1}{2}}\in L^2(\M,\tau)$, the H\"{o}lder inequality yields
\begin{equation}\label{h2}
 \tau\big(h_\om^{\frac{1}{2}-2s}u^*h_\f^{2s}zh_\om^{\frac{1}{2}}\big)\leq\|h_\om^{\frac{1}{2}-2s}u^*h_\f^{2s}\|_2\|zh_\om^{\frac{1}{2}}\|_2.
\end{equation}
Further, we have $h_\om^{\frac{1}{2}-2s}\in L^{\frac{2}{1-4s}}(\M,\tau)$, $u^*h_\f^{2s}\in L^{\frac{1}{2s}}(\M,\tau)$, thus using again the H\"{o}lder inequality with $p=\frac{2}{1-4s}$, $q=\frac{1}{2s}$, and $r=2$, we obtain
\begin{equation}\label{h3}
 \begin{aligned}
  \|h_\om^{\frac{1}{2}-2s}u^*h_\f^{2s}\|_2&\leq\|h_\om^{\frac{1}{2}-2s}\|_{\frac{2}{1-4s}}\|u^*h_\f^{2s}\|_{\frac{1}{2s}}\\
  &\leq\|h_\om^{\frac{1}{2}-2s}\|_{\frac{2}{1-4s}}\|u^*\|_\infty\|h_\f^{2s}\|_{\frac{1}{2s}}\\
  &\leq\|h_\om^{\frac{1}{2}-2s}\|_{\frac{2}{1-4s}}\|h_\f^{2s}\|_{\frac{1}{2s}}=\|h_\om\|_1^{\frac{1}{2}-2s}\|h_\f\|_1^{2s}.
 \end{aligned}
\end{equation}
Now inserting the estimates \eqref{h2} and \eqref{h3} into the inequality \eqref{h1}, we obtain the claim.

For $s=\frac{1}{4}$, we get
\begin{align*}
 &\tau\big(u^*h_\f^{2s}zh_\om^{1-2s}\big)=\tau\big(u^*h_\f^{\frac{1}{2}}zh_\om^{\frac{1}{2}}\big)\leq\|u^*h_\f^{\frac{1}{2}}\|_2\|zh_\om^{\frac{1}{2}}\|_2\\
 &\leq\|u^*\|_\infty\|h_\f^{\frac{1}{2}}\|_2\|zh_\om^{\frac{1}{2}}\|_2\leq\|h_\f\|_1^{\frac{1}{2}}\|zh_\om^{\frac{1}{2}}\|_2,
\end{align*}
which again shows the claim.

(ii) Let now $\frac{1}{4}<s\leq\frac{1}{2}$. Then
\[
 \tau\big(u^*h_\f^{2s}zh_\om^{1-2s}\big)=\tau\big(u^*h_\f^{2s-\frac{1}{2}}h_\f^{\frac{1}{2}}zh_\om^{1-2s}\big)
 =\tau\big(h_\om^{1-2s}u^*h_\f^{2s-\frac{1}{2}}h_\f^{\frac{1}{2}}z).
\]
Since $h_\om^{1-2s}u^*h_\f^{2s-\frac{1}{2}}, h_\f^{\frac{1}{2}}z\in L^2(\M,\tau)$, the H\"{o}lder inequality yields
\begin{equation}\label{h4}
 \tau\big(h_\om^{1-2s}u^*h_\f^{2s-\frac{1}{2}}h_\f^{\frac{1}{2}}z\big)\leq\|h_\om^{1-2s}u^*h_\f^{2s-\frac{1}{2}}\|_2\|h_\f^{\frac{1}{2}}z\|_2.
\end{equation}
Further, we have $h_\om^{1-2s}u^*\in L^{\frac{1}{1-2s}}(\M,\tau)$, $h_\f^{2s-\frac{1}{2}}\in L^{\frac{2}{4s-1}}(\M,\tau)$, thus using again the H\"{o}lder inequality with $p=\frac{1}{1-2s}$, $q=\frac{2}{4s-1}$, and $r=2$, we obtain
\begin{equation}\label{h5}
 \begin{aligned}
  &\|h_\om^{1-2s}u^*h_\f^{2s-\frac{1}{2}}\|_2\leq\|h_\om^{1-2s}u^*\|_{\frac{1}{1-2s}}\|h_\f^{2s-\frac{1}{2}}\|_{\frac{2}{4s-1}}\\
  \leq&\|h_\om^{1-2s}\|_{\frac{1}{1-2s}}\|u^*\|_\infty\|h_\f^{2s-\frac{1}{2}}\|_{\frac{2}{4s-1}}\\
  \leq&\|h_\om^{1-2s}\|_{\frac{1}{1-2s}}\|h_\f^{2s-\frac{1}{2}}\|_{\frac{2}{4s-1}}=\|h_\om\|_1^{1-2s}\|h_\f\|_1^{2s-\frac{1}{2}}.
 \end{aligned}
\end{equation}
Inserting the estimates \eqref{h4} and \eqref{h5} into the inequality \eqref{h1}, we obtain the claim.
\end{proof}
Denote
\[
 \mathcal{K}=\{\Lambda(xh_\om^{1/2}):x\in\M\}+\big(\1-\s^{\pi(\M)'}\big(\Lambda\big(h_\om^{1/2}\big)\big)\big)\Ha,
\]
and
\[
 \mathcal{K}_0=\{\Lambda\big(xh_\om^{1/2}\big):x\in \M\cap L^2(\M,\tau)\}+\big(\1-\s^{\pi(\M)'}\big)\Ha.
\]
(Observe that $\mathcal{K}$ and $\mathcal{K}_0$ were denoted in Chapter \ref{MO1} respectively as $\D(S)$ and $\D(S_0)$, however, since we shall not refer to the operators $S$ and $S_0$, we choose a more straightforward notation.)
\begin{proposition}
For every $0<s\leq\frac{1}{2}$, we have
\[
 \overline{\Delta(\f,\om,)^s|\mathcal{K}_0}=\Delta(\f,\om)^s.
\]
Moreover, the following formula holds true
\begin{equation}\label{basic1}
 \Delta(\f,\om)^s\big(\Lambda\big(xh_\om^{\frac{1}{2}}\big)=\Lambda\big(h_\f^sxh_\om^{\frac{1}{2}-s}\big),\quad x\in\M.
\end{equation}
(As mentioned in the Introduction, this formula was obtained in the setup of Haagerup's $L^2$-spaces in \cite[Proposition 12.6]{Hi}.)
\end{proposition}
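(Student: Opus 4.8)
The plan is to prove the two assertions largely independently: the core statement $\overline{\Delta(\f,\om)^s|\mathcal{K}_0}=\Delta(\f,\om)^s$ for every $0<s\leq\frac{1}{2}$, and the formula \eqref{basic1}. For the core statement I would reduce to the case $s=\frac{1}{2}$, which is already in hand: Proposition~6' says precisely that $\overline{\Delta(\f,\om)^{\frac{1}{2}}|\D(S_0)}=\Delta(\f,\om)^{\frac{1}{2}}$, i.e., since $\mathcal{K}_0=\D(S_0)$, that $\mathcal{K}_0$ is a core for $\Delta(\f,\om)^{\frac{1}{2}}$. The only further ingredient is the elementary spectral fact that for a positive self-adjoint operator $B$ and any $0<t\leq1$, every core for $B$ is also a core for $B^t$: if $e_B$ denotes the spectral measure of $B$, then for $\zeta\in\D(B)$
\[
 \|B^t\zeta\|^2=\int_0^\infty\lambda^{2t}\,\|e_B(d\lambda)\zeta\|^2\leq\int_0^\infty(1+\lambda^2)\,\|e_B(d\lambda)\zeta\|^2=\|\zeta\|^2+\|B\zeta\|^2,
\]
so convergence in the graph norm of $B$ forces convergence in the graph norm of $B^t$; and for any $\xi\in\D(B^t)$ the truncations $e_B([0,n])\xi$ lie in $\D(B)$ and converge to $\xi$ in the graph norm of $B^t$ by dominated convergence, so $\D(B)$ is dense in $\D(B^t)$ for the latter norm. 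Composing these two remarks, a core for $B$ is a core for $B^t$. Taking $B=\Delta(\f,\om)^{\frac{1}{2}}$ and $t=2s\in(0,1]$, so that $B^t=\Delta(\f,\om)^s$, yields $\overline{\Delta(\f,\om)^s|\mathcal{K}_0}=\Delta(\f,\om)^s$.

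For \eqref{basic1}, fix $x\in\M$ and pick a norm-bounded net $\{x_i\}\subset\M\cap L^2(\M,\tau)$ with $x_i\to x$ $\sigma$-strongly* --- such a net exists by Lemma~\ref{sstrong} (the nets constructed in its proof are dominated, hence bounded), or by Kaplansky's density theorem, $\M\cap L^2(\M,\tau)$ being a ${}^*$-subalgebra of $\M$. Then I would check three convergences in $\Ha$. First, $\big\|\Lambda\big((x_i-x)h_\om^{\frac{1}{2}}\big)\big\|_\Ha^2=\om(|x_i-x|^2)\to0$, so $\Lambda\big(x_ih_\om^{\frac{1}{2}}\big)\to\Lambda\big(xh_\om^{\frac{1}{2}}\big)$. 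Secondly, by \eqref{Delta2} when $0<s<\frac{1}{2}$, and by \eqref{bas} together with Lemma~\ref{Delta-pi} when $s=\frac{1}{2}$ (where $h_\om^{\frac{1}{2}-s}=h_\om^0=\s^\M(\om)$), one has $\Lambda\big(x_ih_\om^{\frac{1}{2}}\big)\in\D(\Delta(\f,\om)^s)$ and $\Delta(\f,\om)^s\Lambda\big(x_ih_\om^{\frac{1}{2}}\big)=\Lambda\big(h_\f^sx_ih_\om^{\frac{1}{2}-s}\big)$. Thirdly, with $z=x_i-x\in\M$, Lemma~\ref{est} bounds
\[
 \big\|\Lambda\big(h_\f^s(x_i-x)h_\om^{\frac{1}{2}-s}\big)\big\|_\Ha^2=\tau\big(h_\om^{\frac{1}{2}-s}(x_i-x)^*h_\f^{2s}(x_i-x)h_\om^{\frac{1}{2}-s}\big)
\]
by $\|h_\f\|_1^{2s}\|h_\om\|_1^{\frac{1}{2}-2s}\|x_i-x\|_\infty\|(x_i-x)h_\om^{\frac{1}{2}}\|_2$ when $0<s\leq\frac{1}{4}$, and by $\|h_\f\|_1^{2s-\frac{1}{2}}\|h_\om\|_1^{1-2s}\|x_i-x\|_\infty\|h_\f^{\frac{1}{2}}(x_i-x)\|_2$ when $\frac{1}{4}<s\leq\frac{1}{2}$. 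Since the net is norm-bounded, $\|x_i-x\|_\infty$ stays bounded, while $\|(x_i-x)h_\om^{\frac{1}{2}}\|_2^2=\om(|x_i-x|^2)\to0$ and $\|h_\f^{\frac{1}{2}}(x_i-x)\|_2^2=\f\big((x_i-x)(x_i-x)^*\big)\to0$ by $\sigma$-strong* convergence; hence $\Lambda\big(h_\f^sx_ih_\om^{\frac{1}{2}-s}\big)\to\Lambda\big(h_\f^sxh_\om^{\frac{1}{2}-s}\big)$ (that $h_\f^sxh_\om^{\frac{1}{2}-s}\in L^2(\M,\tau)$ also follows from H\"older's inequality). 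As $\Delta(\f,\om)^s$ is closed, these three convergences give $\Lambda\big(xh_\om^{\frac{1}{2}}\big)\in\D(\Delta(\f,\om)^s)$ and $\Delta(\f,\om)^s\Lambda\big(xh_\om^{\frac{1}{2}}\big)=\Lambda\big(h_\f^sxh_\om^{\frac{1}{2}-s}\big)$, which is \eqref{basic1}.

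I expect the main difficulty to be bookkeeping rather than anything conceptual: one must secure a \emph{norm-bounded} approximating net, so that the factor $\|x_i-x\|_\infty$ appearing in Lemma~\ref{est} stays under control, and one must use the full $\sigma$-strong* --- not merely strong --- convergence, since the range $\frac{1}{4}<s\leq\frac{1}{2}$ of Lemma~\ref{est} requires control of $\f\big((x_i-x)(x_i-x)^*\big)$, i.e., of the adjoints. Everything else reduces to the formulae \eqref{Delta2} and \eqref{bas}, Lemma~\ref{Delta-pi}, the H\"older estimates packaged in Lemma~\ref{est}, and Proposition~6' for the reduction in the core statement.
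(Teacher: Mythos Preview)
Your proof is correct. For formula \eqref{basic1} your argument is essentially identical to the paper's: approximate $x\in\M$ by a norm-bounded net $\{x_i\}\subset\M\cap L^2(\M,\tau)$ converging $\sigma$-strongly*, invoke \eqref{Delta2} (resp.\ \eqref{bas}) to identify $\Delta(\f,\om)^s\Lambda\big(x_ih_\om^{1/2}\big)$, apply Lemma~\ref{est} to control the difference, and close. You are also more careful than the paper in flagging that one needs a \emph{bounded} approximating net so that $\|x_i-x\|_\infty$ is controlled; the paper simply asserts this boundedness.

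Where you genuinely diverge is in the core statement. The paper obtains $\overline{\Delta(\f,\om)^s|\mathcal{K}_0}=\Delta(\f,\om)^s$ as a by-product of the same approximation: the convergences show $\mathcal{K}\subset\D\big(\overline{\Delta(\f,\om)^s|\mathcal{K}_0}\big)$, whence $\overline{\Delta(\f,\om)^s|\mathcal{K}}=\overline{\Delta(\f,\om)^s|\mathcal{K}_0}$, and then the paper invokes (without proof) that $\mathcal{K}$ is a core for $\Delta(\f,\om)^s$. You instead go straight from Proposition~6' --- $\mathcal{K}_0$ is a core for $\Delta(\f,\om)^{1/2}$ --- and the elementary spectral lemma that a core for a positive self-adjoint $B$ is a core for $B^t$ whenever $0<t\leq1$. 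This is cleaner and more economical: it decouples the core assertion from the approximation machinery, and it makes explicit the spectral step that the paper's appeal to ``$\mathcal{K}$ is a core for $\Delta(\f,\om)^s$'' leaves implicit (since that too ultimately rests on $\mathcal{K}$ being a core for $\Delta(\f,\om)^{1/2}$ and the same monotonicity-of-cores fact). The paper's route, on the other hand, keeps everything inside a single argument and yields both conclusions at once.
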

\begin{proof}
For arbitrary $x\in\M$, choose, according to Lemma \ref{sstrong},\\ $x_n\in\M\cap L^2(\M,\tau)$ such that $x_n\to x$ $\sigma$-\emph{strongly}*. We have
\begin{equation}\label{est1}
 \begin{aligned}
  &\|\Lambda\big(x_nh_\om^{\frac{1}{2}}\big)-\Lambda\big(xh_\om^{\frac{1}{2}}\big)\|_\Ha^2=\|\Lambda\big((x_n-x)h_\om^{\frac{1}{2}}\big)\|_\Ha^2\\
  &\langle\Lambda\big((x_n-x)h_\om^{\frac{1}{2}}\big)|\Lambda\big((x_n-x)h_\om^{\frac{1}{2}}\big)\rangle_\Ha\\
  =&\tau\big(h_\om^{\frac{1}{2}}(x_n-x)^*(x_n-x)h_\om^{\frac{1}{2}}\big)=\om((x_n-x)^*(x_n-x))\to0,
 \end{aligned}
\end{equation}
and
\begin{equation*}
 \begin{aligned}
  &\|\Lambda\big(h_\f^sx_nh_\om^{\frac{1}{2}-s}\big)-\Lambda\big(h_\f^sxh_\om^{\frac{1}{2}-s}\big)\|_\Ha^2=\|\Lambda\big(h_\f^s(x_n-x)h_\om^{\frac{1}{2}-s}\big)\|_\Ha^2\\
  =&\langle\Lambda\big(h_\f^s(x_n-x)h_\om^{\frac{1}{2}-s}\big)|\Lambda\big(h_\f^s(x_n-x)h_\om^{\frac{1}{2}-s}\big)\rangle_\Ha\\
  =&\tau\big(h_\om^{\frac{1}{2}-s}(x_n-x)^*h_\f^{2s}(x_n-x)h_\om^{\frac{1}{2}-s}\big)\\
  =&\tau\big(h_\om^{1-2s}(x_n-x)^*h_\f^{2s}(x_n-x)\big).
 \end{aligned}
\end{equation*}
Using Lemma \ref{est}, we obtain the estimates
\begin{equation}\label{est2}
 \begin{aligned}
  &\|\Lambda\big(h_\f^sx_nh_\om^{\frac{1}{2}-s}\big)-\Lambda\big(h_\f^sxh_\om^{\frac{1}{2}-s}\big)\|_\Ha^2\\
  =&\tau\big(h_\om^{1-2s}(x_n-x)^*h_\f^{2s}(x_n-x)\big)\\
  \leq&\|h_\f\|_1^{2s}\|h_\om\|_1^{\frac{1}{2}-2s}\|x_n-x\|_\infty\|(x_n-x)h_\om^{\frac{1}{2}}\|_2
 \end{aligned}
\end{equation}
for $0<s\leq\frac{1}{4}$, and
\begin{equation}\label{est3}
 \begin{aligned}
  &\|\Lambda\big(h_\f^sx_nh_\om^{\frac{1}{2}-s}\big)-\Lambda\big(h_\f^sxh_\om^{\frac{1}{2}-s}\big)\|_\Ha^2\\
  =&\tau\big(h_\om^{1-2s}(x_n-x)^*h_\f^{2s}(x_n-x)\big)\\
  \leq&\|h_\f\|_1^{2s-\frac{1}{2}}\|h_\om\|_1^{1-2s}\|x_n-x\|_\infty\|h_\f^{\frac{1}{2}}(x_n-x)\|_2,
 \end{aligned}
\end{equation}
for $\frac{1}{4}<s<\frac{1}{2}$. Now we have
\begin{equation}\label{est4}
 \begin{aligned}
  \|(x_n-x)h_\om^{\frac{1}{2}}\|_2^2&=\tau\big(h_\om^{\frac{1}{2}}(x_n-x)^*(x_n-x)h_\om^{\frac{1}{2}}\big)\\
  &=\om((x_n-x)^*(x_n-x))\to0,
 \end{aligned}
\end{equation}
and
\begin{equation}\label{est5}
 \begin{aligned}
  &\|h_\f^{\frac{1}{2}}(x_n-x)\|_2^2=\tau\big((x_n-x)^*h_\f(x_n-x)\big)\\
  =&\tau\big(h_\f^{\frac{1}{2}}(x_n-x)(x_n-x)^*h_\f^{\frac{1}{2}}\big)=\f((x_n-x)(x_n-x)^*)\to0.
 \end{aligned}
\end{equation}
Since $x_n\to x$ $\sigma$-\emph{strongly}*, the norms $\|x_n-x\|_\infty$ are bounded, consequently, the relations \eqref{est2}, \eqref{est3}, \eqref{est4}, and \eqref{est5} yield
\[
 \Lambda\big(h_\f^sx_nh_\om^{\frac{1}{2}-s}\big)\to\Lambda\big(h_\f^sxh_\om^{\frac{1}{2}-s}\big) \quad \textit{in norm}.
\]
The relation \eqref{est1} yields
\begin{equation}\label{close1}
 \Lambda\big(x_nh_\om^{\frac{1}{2}}\big)\to\Lambda\big(xh_\om^{\frac{1}{2}}\big) \quad \textit{in norm}.
\end{equation}
For $\xi\in\big(\1-\s^{\pi(\M)'}\big(\Lambda\big(h_\om^{\frac{1}{2}}\big)\big)\big)\Ha$ we have
\[
 \pi'\big(\widetilde{h_\om}^{\frac{1}{2}}\big)\xi=0,
\]
and thus
\[
 \pi'\big(\widetilde{h_\om}^s\big)\xi=0,
\]
consequently, since $\pi\big(h_\f^s\big)\pi'\big(\widetilde{h_\om}^s\big)\subset\Delta(\f,\om)^s$, we get
\[
 \Delta(\f,\om)^s\xi=0.
\]
For $\xi$ as above, we have, on account of the relation \eqref{close1},
\[
 \mathcal{K}_0\ni\Lambda\big(x_nh_\om^{\frac{1}{2}}\big)+\xi\to\Lambda\big(xh_\om^{\frac{1}{2}}\big)+\xi\in\mathcal{K} \quad \textit{in norm}.
\]
Further, we have
\begin{equation}\label{basic''}
 \begin{aligned}
  &\Delta(\f,\om)^s\big(\Lambda\big(x_nh_\om^{\frac{1}{2}}\big)+\xi\big)
  =\pi\big(h_\f^s\big)\pi'\big(\widetilde{h_\om}^s\big)\big(\Lambda\big(x_nh_\om^{\frac{1}{2}}\big)+\xi\big)\\
  =&\pi\big(h_\f^s\big)\pi'\big(\widetilde{h_\om}^s\big)\Lambda\big(x_nh_\om^{\frac{1}{2}}\big)
  =\Lambda\big(h_\f^sx_nh_\om^{\frac{1}{2}-s}\big)\to\Lambda\big(h_\f^sxh_\om^{\frac{1}{2}-s}\big),
 \end{aligned}
\end{equation}
which yields that $\Lambda\big(xh_\om^{\frac{1}{2}}\big)+\xi\in\D(\overline{\Delta(\f,\om)^s|\mathcal{K}_0})$, i.e.
\[
 \mathcal{K}\subset\D(\overline{\Delta(\f,\om)^s|\mathcal{K}_0}).
\]
It follows that
\[
 \Delta(\f,\om)^s|\mathcal{K}\subset\overline{\Delta(\f,\om)^s|\mathcal{K}_0},
\]
which, since $\mathcal{K}_0\subset\mathcal{K}$, yields the equality
\[
 \overline{\Delta(\f,\om)^s|\mathcal{K}}=\overline{\Delta(\f,\om)^s|\mathcal{K}_0}.
\]
Since $\mathcal{K}$ is a core for $\Delta(\f,\om)^s$, we get
\[
 \Delta(\f,\om)^s=\overline{\Delta(\f,\om)^s|\mathcal{K}}=\overline{\Delta(\f,\om)^s|\mathcal{K}_0}.
\]
Moreover, the relations \eqref{close1}, \eqref{basic''}, and the closedness of $\Delta(\f,\om)^s$ yield the formula \eqref{basic1}.
\end{proof}
\begin{theorem}
For $0<s\leq\frac{1}{2}$, the following formula holds
\begin{equation}\label{basic2}
 \Delta(\f,\om)^s=\overline{\pi\big(h_\f^s\big)\pi'\big(\widetilde{h_\om}^s\big)}.
\end{equation}
\end{theorem}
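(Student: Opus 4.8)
The plan is to squeeze $\Delta(\f,\om)^s$ between $\overline{\pi(h_\f^s)\pi'(\widetilde{h_\om}^s)}$ and itself, following the pattern of the proof of Theorem~\ref{Delta1}. The easy inclusion comes first: Lemma~\ref{Delta-pi} gives $\pi(h_\f^s)\pi'(\widetilde{h_\om}^s)\subset\Delta(\f,\om)^s$, and since $\Delta(\f,\om)^s$ is self-adjoint, hence closed, this already yields $\overline{\pi(h_\f^s)\pi'(\widetilde{h_\om}^s)}\subset\Delta(\f,\om)^s$.

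For the opposite inclusion I would lean on the preceding Proposition, whose proof contains all the work. There it is shown that $\mathcal{K}_0$ is a core for $\Delta(\f,\om)^s$ and that, on $\mathcal{K}_0$, one has $\Delta(\f,\om)^s(\Lambda(xh_\om^{1/2})+\xi)=\pi(h_\f^s)\pi'(\widetilde{h_\om}^s)(\Lambda(xh_\om^{1/2})+\xi)=\Lambda(h_\f^sxh_\om^{1/2-s})$ for $x\in\M\cap L^2(\M,\tau)$ and $\xi\in(\1-\s^{\pi(\M)'}(\Lambda(h_\om^{1/2})))\Ha$ (this is the computation \eqref{basic''}); in particular $\Delta(\f,\om)^s|\mathcal{K}_0=\pi(h_\f^s)\pi'(\widetilde{h_\om}^s)|\mathcal{K}_0$. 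Consequently
\[
 \Delta(\f,\om)^s=\overline{\Delta(\f,\om)^s|\mathcal{K}_0}=\overline{\pi(h_\f^s)\pi'(\widetilde{h_\om}^s)|\mathcal{K}_0}\subset\overline{\pi(h_\f^s)\pi'(\widetilde{h_\om}^s)},
\]
and combining with the easy inclusion gives \eqref{basic2}.

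Since every ingredient is borrowed from results established above, there is no genuine obstacle here; the single point deserving a line of care is purely a domain bookkeeping issue, namely that $\mathcal{K}_0\subset\D(\pi(h_\f^s)\pi'(\widetilde{h_\om}^s))$ so that the restriction in the middle of the display above makes sense. This is immediate: for $x\in\M\cap L^2(\M,\tau)$ one has $\Lambda(xh_\om^{1/2})\in\D(\pi'(\widetilde{h_\om}^s))$ with $\pi'(\widetilde{h_\om}^s)\Lambda(xh_\om^{1/2})=\Lambda(xh_\om^{1/2-s})\in\D(\pi(h_\f^s))$, exactly as established in the proof of \eqref{basic1}, while $\pi'(\widetilde{h_\om}^s)$ annihilates $(\1-\s^{\pi(\M)'}(\Lambda(h_\om^{1/2})))\Ha$, hence so does the product. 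As an alternative one could bypass $\mathcal{K}_0$ and replay the adjoint-taking argument of Theorem~\ref{Delta1} verbatim, this time using the self-adjoint operator $\int_{-\infty}^\infty u^{2s}(\lambda)v^{2s}(\lambda)\,m(d\lambda)$ in place of $A$ and $\overline{\pi(h_\f^s)\pi'(\widetilde{h_\om}^s)|\mathcal{K}_0}=\Delta(\f,\om)^s$ (from the preceding Proposition) in place of the corresponding identity for exponent $\tfrac12$.
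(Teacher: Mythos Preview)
Your proposal is correct and follows essentially the same approach as the paper: both combine the inclusion from Lemma~\ref{Delta-pi} (taking closures using that $\Delta(\f,\om)^s$ is closed) with the identity $\Delta(\f,\om)^s|\mathcal{K}_0=\pi(h_\f^s)\pi'(\widetilde{h_\om}^s)|\mathcal{K}_0$ and the core property of $\mathcal{K}_0$ from the preceding Proposition to obtain the reverse inclusion. The paper's proof is slightly terser, omitting the domain bookkeeping you spell out, but the logic is identical.
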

\begin{proof}
We have
\[
 \Delta(\f,\om)^s|\mathcal{K}_0=\pi\big(h_\f^s\big)\pi'\big(\widetilde{h_\om}^s\big)|\mathcal{K}_0\subset\pi\big(h_\f^s\big)\pi'\big(\widetilde{h_\om}^s\big),
\]
thus
\[
 \Delta(\f,\om)^s=\overline{\Delta(\f,\om)^s|\mathcal{K}_0}\subset\overline{\pi\big(h_\f^s\big)\pi'\big(\widetilde{h_\om}^s\big)}.
\]
The reverse inclusion follows from Lemma \ref{Delta-pi}.
\end{proof}

\section{Trace formula, relative entropy and information between states, quasi-entropies, and R\'enyi's relative entropy}\label{RE-I}
In this section, we employ the results on the relative modular operator to prove some basic property of a trace, to prove equality between Araki's relative entropy and information between states as defined by Umegaki, and to obtain formulae for quasi entropies and R\'enyi's relative entropy.

\subsection{Trace formula}\label{trform}
The result presented below was obtained in\\ \cite[Theorem 17]{BK} in a more general setting assuming only that\\ $xy,yx\in L^1(\M,\tau)$. Nevertheless, it seems interesting to see how the relative modular operator can be used to get some basic formula for the trace.

One of the defining properties of the trace is the equality
\[
 \tau(x^*x)=\tau(xx^*),
\]
which, after using a polarization formula, yields the equality
\[
 \tau(xy)=\tau(yx)
\]
for arbitrary $x,y\in L^2(\M,\tau)$. Our aim is to obtain this formula in a more general case.
\begin{theorem}\label{tau}
Let $x\in L^p(\M,\tau)$, $y\in L^q(\M,\tau)$, $\frac{1}{p}+\frac{1}{q}=1$. Then
\[
 \tau(xy)=\tau(yx).
\]
\end{theorem}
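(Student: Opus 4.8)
The plan is to derive the equality from formula \eqref{basic} together with the self-adjointness of $\Delta(\f,\om)^s$; most of the work lies in reducing to the situation where that formula applies. First I would reduce to positive operators: if $x\in L^p(\M,\tau)$ then $x^*\in L^p(\M,\tau)$, so $\tfrac12(x+x^*)$ and $\tfrac1{2i}(x-x^*)$ lie in $L^p(\M,\tau)$, and a self-adjoint element of $L^p(\M,\tau)$ is the difference of its positive and negative parts, each again in $L^p(\M,\tau)$; thus every $x\in L^p(\M,\tau)$ is a linear combination of four elements of $L^p(\M,\tau)^+$, and the same for $y\in L^q(\M,\tau)$. Since all products in question lie in $L^1(\M,\tau)$ by the Hölder inequality and $(x,y)\mapsto\tau(xy)-\tau(yx)$ is bilinear, it suffices to treat $x\in L^p(\M,\tau)^+$, $y\in L^q(\M,\tau)^+$.

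Suppose first $1<p,q<\infty$, and discard the trivial case $x=0$ or $y=0$. Put $h_\f=x^p$, $h_\om=y^q$, the densities of states $\f,\om\in\M_*^+$, so $x=h_\f^{1/p}$, $y=h_\om^{1/q}$; as $\tau(xy)=\tau(yx)$ is symmetric in $x,y$, we may assume $s:=1/p\leq\tfrac12$, whence $1/q=1-s$. By the Hölder inequality $h_\f^sh_\om^{\frac12-s}\in L^2(\M,\tau)$ (with $h_\om^0=\s(h_\om)$ when $s=\tfrac12$), so $\Lambda\big(h_\f^sh_\om^{\frac12-s}\big)\in\Ha$, and $\Lambda\big(h_\om^{\frac12}\big)\in\D(\Delta(\f,\om)^s)$ since $s\leq\tfrac12$. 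Formula \eqref{basic} reads
\[
 \Delta(\f,\om)^s\Lambda\big(h_\om^{\frac12}\big)=\Lambda\big(h_\f^sh_\om^{\frac12-s}\big),
\]
and I would compute $\big\langle\Lambda\big(h_\om^{\frac12}\big)\,\big|\,\Delta(\f,\om)^s\Lambda\big(h_\om^{\frac12}\big)\big\rangle_\Ha$ twice. Substituting this formula and interchanging two $L^2(\M,\tau)$-factors under the trace (the known $p=q=2$ case) gives the value $\tau\big(h_\om^{\frac12}h_\f^sh_\om^{\frac12-s}\big)=\tau\big(h_\f^sh_\om^{1-s}\big)=\tau(xy)$; using instead the self-adjointness of $\Delta(\f,\om)^s$ to move it to the left-hand entry, substituting the formula, and again interchanging two $L^2(\M,\tau)$-factors, gives $\tau\big(h_\om^{\frac12-s}h_\f^sh_\om^{\frac12}\big)=\tau\big(h_\om^{1-s}h_\f^s\big)=\tau(yx)$. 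Comparison yields $\tau(xy)=\tau(yx)$.

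There remains the endpoint $p=1$, $q=\infty$ (and symmetrically $p=\infty$, $q=1$), not covered above because elements of $\M$ are not $\tfrac1q$-th powers of $L^1$-densities. After the reduction, $x=h_\f\in L^1(\M,\tau)^+$, $y\in\M$; writing $h_\f=h_\f^{\frac12}\cdot h_\f^{\frac12}$ and noting $h_\f^{\frac12},\,h_\f^{\frac12}y,\,yh_\f^{\frac12}\in L^2(\M,\tau)$ by the Hölder inequality, two interchanges of $L^2(\M,\tau)$-factors give $\tau(h_\f y)=\tau\big(h_\f^{\frac12}yh_\f^{\frac12}\big)=\tau(yh_\f)$. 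The only genuine obstacle I foresee is the exponent bookkeeping: one must verify at each step that the operator being formed sits in $L^1(\M,\tau)$ or $L^2(\M,\tau)$, so that the pertinent interchange of factors under $\tau$ — i.e. the $L^2$-instance of the theorem itself — applies, and that $\Lambda\big(h_\om^{\frac12}\big)$ really lies in $\D(\Delta(\f,\om)^s)$. The input from the relative modular operator — formula \eqref{basic} and self-adjointness of $\Delta(\f,\om)^s$ — enters without difficulty.
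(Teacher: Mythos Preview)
Your argument is correct and follows essentially the same route as the paper: reduce to positive $x,y$ by decomposition into real/imaginary and then positive/negative parts, set $h_\f=x^p$, $h_\om=y^q$, and use formula \eqref{basic} to evaluate $\langle\Lambda(h_\om^{1/2})\,|\,\Delta(\f,\om)^s\Lambda(h_\om^{1/2})\rangle_\Ha$. The only tactical difference is that where you invoke self-adjointness of $\Delta(\f,\om)^s$ to shift it to the left entry and re-apply \eqref{basic}, the paper instead uses positivity of $\Delta(\f,\om)^s$ to see that the inner product is real and then takes the complex conjugate, obtaining $\tau(h_\f^sh_\om^{1-s})=\overline{\tau(h_\f^sh_\om^{1-s})}=\tau(h_\om^{1-s}h_\f^s)$ directly; these are equivalent moves. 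Your explicit treatment of the endpoint $p=1$, $q=\infty$ via two $L^2$-interchanges is a nice touch --- the paper's proof writes $h_\f=x^p$ without comment on this case, tacitly relying on the identity $\tau(xh)=\tau(hx)$ for $x\in\M$, $h\in L^1(\M,\tau)$ already quoted in the preliminaries.
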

\begin{proof}
Let $h_\f$ and $h_\om$ be arbitrary density operators corresponding to normal states $\f$ and $\om$. For arbitrary $0<s\leq\frac{1}{2}$, we obtain, on account of the formula \eqref{basic},
\begin{equation}\label{basic'}
 \begin{aligned}
  &\langle\Lambda\big(h_\om^{\frac{1}{2}}\big)|\Delta(\f,\om)^s\Lambda\big(h_\om^{\frac{1}{2}}\big)\rangle_\Ha
  =\langle\Lambda\big(h_\om^{\frac{1}{2}}\big)|\Lambda\big(h_\f^sh_\om^{\frac{1}{2}-s}\big)\rangle_\Ha\\
  =&\tau\big(h_\om^{\frac{1}{2}}h_\f^sh_\om^{\frac{1}{2}-s}\big)=\tau\big(h_\f^sh_\om^{1-s}\big),
 \end{aligned}
\end{equation}
since $h_\om^{\frac{1}{2}},h_\f^sh_\om^{\frac{1}{2}-s}\in L^2(\M,\tau)$. Since $\Delta(\f,\om)^s$ is positive, it follows that on the left hand side of the formula \eqref{basic'} we have a nonnegative number, consequently,
\begin{align*}
 0\leq\langle\Lambda\big(h_\om^{\frac{1}{2}}\big)|\Delta(\f,\om)^s\Lambda\big(h_\om^{\frac{1}{2}}\big)\rangle_\Ha&=\tau\big(h_\f^sh_\om^{1-s}\big)=\overline{\tau\big(h_\f^sh_\om^{1-s}\big)}\\
 &=\tau\big(\big(h_\f^sh_\om^{1-s}\big)^*\big)=\tau\big(h_\om^{1-s}h_\f^s\big),
\end{align*}
i.e.
\begin{equation}\label{t}
 \tau\big(h_\f^sh_\om^{1-s}\big)=\tau\big(h_\om^{1-s}h_\f^s\big).
\end{equation}

Let now $0\leq x\in L^p(\M,\tau)$, $0\leq y\in L^q(\M,\tau)$. We have either $p\geq2$ or $q\geq2$, so assume that $p\geq2$. Put $s=\frac{1}{p}$, and define $h_\f$ and $h_\om$ by the formulae
\[
 h_\f=x^{\frac{1}{s}}=x^p, \qquad h_\om=y^{\frac{1}{1-s}}=y^q.
\]
Then $h_\f,h_\om$ are density operators, and the equality \eqref{t} yields
\[
 \tau(xy)=\tau\big(h_\f^sh_\om^{1-s}\big)=\tau\big(h_\om^{1-s}h_\f^s\big)=\tau(yx).
\]
For selfadjoint $x\in L^p(\M,\tau)$, $y\in L^q(\M,\tau)$, we have the Jordan decompositions
\[
 x=x^+-x^-, \qquad y=y^+-y^-,
\]
and since
\[
 |x|=x^++x^-\geq x^\pm. \qquad |y|=y^++y^-\geq y^\pm,
\]
we see that $x^\pm\in L^p(\M,\tau)$ and $y^\pm\in L^q(\M,\tau)$. Consequently,\\ $x^\pm y^\pm\in L^1(\M,\tau)$, and from the first part, we obtain
\[
 \tau(x^\pm y^\pm)=\tau(y^\pm x^\pm),
\]
hence
\begin{align*}
 \tau(xy)&=\tau((x^+-x^-)(y^+-y^-))\\
 &=\tau(x^+y^+-x^+y^--x^-y^++x^-y^-)\\
 &=\tau(y^+x^+-y^-x^+-y^+x^-+y^-x^-)\\
 &=\tau((y^+-y^-)(x^+-x^-))=\tau(yx).
\end{align*}
For arbitrary $x\in L^p(\M,\tau)$, $y\in L^q(\M,\tau)$, we obtain the result decomposing $x$ and $y$ as $x=x_1+ix_2$, $y=y_1+iy_2$ with $x_1$, $x_2$, $y_1$, $y_2$ selfadjoint.
\end{proof}

\subsection{Relative entropy and information between states}\label{ss1}
Let $\N$ be an arbitrary von Neumann algebra, and let $\f,\om\in\N_*^+$ be (non-normalised) states. Assume that $\s^\N(\om)\leq\s^\N(\f)$. Represent $\N$ such that in this representation $\om$ and $\f$ are vector states given respectively by vectors $\xi$ and $\eta$ in the space of the representation:
\[
 \om(\cdot)=\langle\xi|\cdot\xi\rangle, \qquad \f(\cdot)=\langle\eta|\cdot\eta\rangle.
\]
The \emph{relative entropy} $S(\f,\om)$ is defined as
\[
 S(\f,\om)=-\langle\xi|\log\Delta(\f,\omega)\xi\rangle,
\]
where $\Delta(\f,\om)$ is the relative modular operator. Moreover, $S(\f,\om)$ is independent of the representation chosen. Note that the form of the definition above is a little formal since $\xi$ may not belong to the domain of $\log\Delta(\f,\om)$. It is to be understood as
\[
 S(\f,\om)=-\int_0^\infty\log\lambda\,\langle\xi|e(d\lambda)\xi\rangle=-\int_0^\infty\log\lambda\,\|e(d\lambda)\xi\|^2,
\]
where
\[
 \Delta(\f,\om)=\int_0^\infty\lambda\,e(d\lambda)
\]
is the spectral decomposition of $\Delta(\f,\om)$.

In our case, we have, taking into account the equalities (11') and (12'),
\begin{equation}\label{S}
 S(\f,\om)=-2\int_{-\infty}^\infty\log(u(\lambda)v(\lambda))\,\|m(d\lambda)\Lambda\big(h_\om^{\frac{1}{2}}\big)\|_{\Ha}^2.
\end{equation}
For the states $\f$ and $\om$, the \emph{information} $I(\om,\f)$ between these states is defined in \cite{U} by the formula
\[
 I(\om,\f)=\tau(h_\om\log h_\om-h_\om\log h_\f),
\]
under the assumption that $\s^\M(\om)\leq\s^\M(\f)$, and that the \emph{Segal entropy} $H(\om)$ of $\om$ defined as
\[
 H(\om)=\tau(h_\om\log h_\om)
\]
is finite. (In the original Segal definition \cite{S1}, there is a minus sign before the trace; we choose the version as above for simplicity and in order that $H(\om)$ be nonnegative for a normalised state and finite trace.) The point in the requirement $\s^\M(\om)\leq\s^\M(\f)$ is that
\[
 \s^\M(\om)=\s^\M(h_\om), \qquad \s^\M(\f)=\s^\M(h_\f),
\]
so if for a vector $\xi$ we have $h_\f\xi=0$, then $h_\om\xi=0$, in which case we define $(h_\om\log h_\f)\xi=0$. It is known that in the finite dimensional case we have
\[
 S(\f,\om)=I(\om,\f).
\]
However, in infinite dimension serious problems arise. First, despite the statement in \cite[p. 69]{U}, the operator $h_\om\log h_\f$ \emph{need not be measurable if $\M$ is not finite} because $\log h_\f$ need not be measurable (by the way, a similar mistake is made further in the proof of Proposition 4.1 where it is stated that the operator $(b+p)^{-1}$ is measurable --- again, if e.g. $p=0$ which amounts to the fact that $b$ is invertible, then $b^{-1}$ need not be measurable). Consequently, it may happen that the domain of $h_\om\log h_\f$ is $\{0\}$ which makes the whole definition of $\tau(h_\om\log h_\f)$ questionable. A natural way out seems to be as follows. The expression $\tau(h_\om\log h_\f)$ can formally be regarded as $\om(\log h_\f)$ which in turn requires a reasonable definition of the objects like $\om(x)$ for unbounded $x$. To this end, assume first that $x$ is a selfadjoint positive operator affiliated with $\M$, with the spectral decomposition
\[
 x=\int_0^\infty\lambda\,e(d\lambda),
\]
and define
\begin{equation}\label{om}
 \om(x)=\int_0^\infty\lambda\,\om(e(d\lambda)).
\end{equation}
(Observe that for $x\in\M$ we do have the formula \eqref{om}.) To justify this definition in general, observe that the following lemma holds true.
\begin{lemma}\label{om-tau}
Let $x$ be selfadjoint positive and measurable. Then $\om(x)$ is finite if and only if $h_\om^{\frac{1}{2}}xh_\om^{\frac{1}{2}}\in L^1(\M,\tau)$, in which case
\[
 \om(x)=\tau(h_\om^{\frac{1}{2}}xh_\om^{\frac{1}{2}}).
\]
\end{lemma}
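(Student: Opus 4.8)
The plan is to reduce to the bounded truncations of $x$ and then to invoke the monotone convergence (Fatou) property of the trace on $L^1(\M,\tau)$. Write the spectral decomposition $x=\int_0^\infty\lambda\,e(d\lambda)$ and set, as in \eqref{trunc}, $x_{[n]}=\int_0^n\lambda\,e(d\lambda)\in\M^+$; this is an increasing sequence. Since $x$ is measurable one has $\tau(e([0,n])^\bot)\to0$, and from $(x-x_{[n]})e([0,n])=0$ it follows that $x_{[n]}\to x$ in Segal's sense, hence in measure (the same argument appears inside the proof of Proposition~\ref{P}). Because $h_\om\in L^1(\M,\tau)$ we have $h_\om^{\frac{1}{2}}\in L^2(\M,\tau)$, so by the H\"older inequality
\[
 a_n:=h_\om^{\frac{1}{2}}x_{[n]}h_\om^{\frac{1}{2}}\in L^2(\M,\tau)\cdot\M\cdot L^2(\M,\tau)\subset L^1(\M,\tau),
\]
and $a_n\geq0$; moreover $(a_n)$ is increasing, since $x_{[n+1]}-x_{[n]}=\int_n^{n+1}\lambda\,e(d\lambda)\geq0$ and conjugation by $h_\om^{\frac{1}{2}}$ preserves positivity in $\widetilde{\M}$. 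Finally, $\tau(a_n)=\om(x_{[n]})$ by the identity $\om(y)=\tau\big(h_\om^{\frac{1}{2}}yh_\om^{\frac{1}{2}}\big)$, valid for $y\in\M$.

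Next I would record two passages to the limit. On the scalar side, $E\mapsto\om(e(E))$ is a finite positive Borel measure on $[0,\infty)$ (of total mass $\om(\1)$), so ordinary monotone convergence gives
\[
 \om(x_{[n]})=\int_0^n\lambda\,\om(e(d\lambda))\nearrow\int_0^\infty\lambda\,\om(e(d\lambda))=\om(x)
\]
in $[0,\infty]$. On the operator side, continuity of multiplication in the measure topology of $\widetilde{\M}$, applied to $x_{[n]}\to x$ and the fixed left and right factors $h_\om^{\frac{1}{2}}$, yields $a_n\to h_\om^{\frac{1}{2}}xh_\om^{\frac{1}{2}}$ in measure; also $a_n\leq h_\om^{\frac{1}{2}}xh_\om^{\frac{1}{2}}$ for all $n$, since $x-x_{[n]}\geq0$. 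Now apply the monotone convergence property of $\tau$ on $L^1(\M,\tau)$ to the increasing sequence $(a_n)$ (cf.\ \cite{Y}). If $\sup_n\tau(a_n)<\infty$, the sequence $(a_n)$ converges in measure to an element of $L^1(\M,\tau)^+$ whose trace equals $\sup_n\tau(a_n)$; by uniqueness of limits in the measure topology this element is $h_\om^{\frac{1}{2}}xh_\om^{\frac{1}{2}}$, so $h_\om^{\frac{1}{2}}xh_\om^{\frac{1}{2}}\in L^1(\M,\tau)$ and $\tau\big(h_\om^{\frac{1}{2}}xh_\om^{\frac{1}{2}}\big)=\sup_n\tau(a_n)=\om(x)$. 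Conversely, if $h_\om^{\frac{1}{2}}xh_\om^{\frac{1}{2}}\in L^1(\M,\tau)$, then $a_n\leq h_\om^{\frac{1}{2}}xh_\om^{\frac{1}{2}}$ and monotonicity of $\tau$ give $\om(x)=\sup_n\tau(a_n)\leq\tau\big(h_\om^{\frac{1}{2}}xh_\om^{\frac{1}{2}}\big)<\infty$, so in fact equality holds by the previous sentence. This establishes the equivalence and the identity simultaneously.

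The routine ingredients here are the $L^p$-bookkeeping (that $a_n\in L^1(\M,\tau)$ and $\tau(a_n)=\om(x_{[n]})$) and the scalar monotone convergence. The one point that deserves care, and which I regard as the main obstacle, is the final step: one must be sure that the abstract increasing limit of the $a_n$ in the noncommutative $L^1$-space really is the strong product $h_\om^{\frac{1}{2}}xh_\om^{\frac{1}{2}}\in\widetilde{\M}$ --- this is why I pass through convergence in measure of $(a_n)$ to $h_\om^{\frac{1}{2}}xh_\om^{\frac{1}{2}}$ and use uniqueness of limits in the measure topology, rather than merely comparing suprema of traces.
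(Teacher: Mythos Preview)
Your proof is correct and follows essentially the same route as the paper's: truncation $x_{[n]}$, convergence $x_{[n]}\to x$ in Segal's sense (hence in measure) via measurability, the identity $\tau(a_n)=\om(x_{[n]})$, and identification of the $L^1$-limit of $a_n=h_\om^{1/2}x_{[n]}h_\om^{1/2}$ with $h_\om^{1/2}xh_\om^{1/2}$ via uniqueness of limits in the measure topology. The only cosmetic difference is that the paper writes out the Cauchy-in-$\|\cdot\|_1$ argument explicitly (using $\|a_n-a_m\|_1=\tau(a_n-a_m)=\om(x_{[n]})-\om(x_{[m]})$ for $n\geq m$), whereas you package this step as a citation of the monotone-convergence property of $\tau$.
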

\begin{proof}
Let $x_{[n]}$ be the truncation of $x$ defined by the formula \eqref{trunc}, i.e.
\[
 x_{[n]}=\int_0^n\lambda\,e(d\lambda).
\]
Observe first that we have
\[
 \lim_{n\to\infty}\om(x_{[n]})=\lim_{n\to\infty}\om\Big(\int_0^n\lambda\,e(d\lambda)\Big)=\lim_{n\to\infty}\int_0^n\lambda\,\om(e(d\lambda))=\om(x).
\]
Let $\varepsilon>0$ be arbitrary. From the measurability of $x$, it follows that there is $\lambda_0$ such that $\tau(e((\lambda_0,+\infty)))<\varepsilon$, and for $n>\lambda_0$ we have
\[
 (x_{[n]}-x)e([0,\lambda_0])=0,
\]
which means that $x_{[n]}\to x$ \emph{in Segal's sense}.

Assume that $\om(x)<+\infty$. For $n\geq m$ we have, taking into account the inequality $h_\om^{\frac{1}{2}}x_nh_\om^{\frac{1}{2}}-h_\om^{\frac{1}{2}}x_mh_\om^{\frac{1}{2}}\geq0$,
\begin{align*}
 \|h_\om^{\frac{1}{2}}x_{[n]}h_\om^{\frac{1}{2}}-h_\om^{\frac{1}{2}}x_{[m]}h_\om^{\frac{1}{2}}\|_1&=\tau\big(h_\om^{\frac{1}{2}}(x_{[n]}-x_{[m]})h_\om^{\frac{1}{2}}\big)\\
 &=\om(x_{[n]}-x_{[m]})=\int_m^n\lambda\,\om(e(d\lambda))\underset{n,m\to\infty}{\longrightarrow0},
\end{align*}
which means that the sequence $\big(h_\om^{\frac{1}{2}}x_{[n]}h_\om^{\frac{1}{2}}\big)$ is Cauchy in $\|\cdot\|_1$-norm. Consequently,
\[
 h_\om^{\frac{1}{2}}x_{[n]}h_\om^{\frac{1}{2}}\to z \quad\text{\emph{in $\|\cdot\|_1$-norm}}
\]
for some $z\in L^1(\M,\tau)$. On the other hand, we have $x_{[n]}\to x$ \emph{in Segal's sense}, thus $x_{[n]}\to x$ \emph{in measure}, hence $h_\om^{\frac{1}{2}}x_{[n]}h_\om^{\frac{1}{2}}\to h_\om^{\frac{1}{2}}xh_\om^{\frac{1}{2}}$ \emph{in measure}. Since convergence in $\|\cdot\|_1$-norm implies convergence in measure, we get $h_\om^{\frac{1}{2}}xh_\om^{\frac{1}{2}}=z\in L^1(\M,\tau)$, and convergence in $\|\cdot\|_1$-norm implies
\[
 \tau\big(h_\om^{\frac{1}{2}}xh_\om^{\frac{1}{2}}\big)=\lim_{n\to\infty}\tau\big(h_\om^{\frac{1}{2}}x_{[n]}h_\om^{\frac{1}{2}}\big)=\lim_{n\to\infty}\om(x_{[n]})=\om(x).
\]

Assume now that $h_\om^{\frac{1}{2}}xh_\om^{\frac{1}{2}}\in L^1(\M,\tau)$. Since
\[
 h_\om^{\frac{1}{2}}x_{[n]}h_\om^{\frac{1}{2}}\leq h_\om^{\frac{1}{2}}xh_\om^{\frac{1}{2}},
\]
we obtain
\begin{align*}
 \om(x)=\lim_{n\to\infty}\om(x_{[n]})=\lim_{n\to\infty}\tau\big(h_\om^{\frac{1}{2}}x_{[n]}h_\om^{\frac{1}{2}}\big)\leq\tau\big(h_\om^{\frac{1}{2}}xh_\om^{\frac{1}{2}}),
\end{align*}
i.e. $\om(x)<+\infty$.
\end{proof}
\begin{remark}
Observe that $\om(x)$ defined by \eqref{om} always makes sense, as does $\tau\big(h_\om^{\frac{1}{2}}xh_\om^{\frac{1}{2}}\big)$, because $h_\om^{\frac{1}{2}}xh_\om^{\frac{1}{2}}\geq0$, and the lemma above yields the equality
\[
 \om(x)=\tau\big(h_\om^{\frac{1}{2}}xh_\om^{\frac{1}{2}}\big)
\]
also for either of the terms being equal to infinity.
\end{remark}
Now arbitrary selfadjoint $x$ affiliated with $\M$ has the Jordan decomposition
\[
 x=x^+-x^-,
\]
where
\[
 x=\int_{-\infty}^{+\infty}\lambda\,e(d\lambda), \qquad x^+=\int_0^\infty\lambda\,e(d\lambda), \qquad x^-=-\int_{-\infty}^0\lambda\,e(d\lambda),
\]
and we define
\[
 \om(x)=\om(x^+)-\om(x^-),
\]
whenever $\min\{\om(x^-),\om(x^+)\}<\infty$. Then $\om(x)$ is finite if and only if $\om(|x|)$ is finite. From Lemma \ref{om-tau}, we get
\begin{lemma}\label{om-tau1}
Let $x$ be selfadjoint and measurable. Then $\om(x)$ is finite if and only if $h_\om^{\frac{1}{2}}xh_\om^{\frac{1}{2}}\in L^1(\M,\tau)$, in which case
\begin{equation}\label{om-tau0}
 \om(x)=\tau(h_\om^{\frac{1}{2}}xh_\om^{\frac{1}{2}}).
\end{equation}
\end{lemma}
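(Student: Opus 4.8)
The plan is to reduce everything to the positive case already settled in Lemma~\ref{om-tau}, via the Jordan decomposition $x=x^+-x^-$. First I would note that $x^+$ and $x^-$ are again selfadjoint, positive and measurable (they are spectral functions of $x$, so the finite-trace condition defining measurability is inherited), and recall from the convention fixed just before the statement that $\om(x)$ is finite precisely when $\om(|x|)=\om(x^+)+\om(x^-)$ is finite, i.e.\ --- since $\om(x^\pm)\ge0$ --- precisely when $\om(x^+)<\infty$ \emph{and} $\om(x^-)<\infty$. Applying Lemma~\ref{om-tau} separately to $x^+$ and to $x^-$ turns this into the pair of conditions $h_\om^{\frac12}x^+h_\om^{\frac12}\in L^1(\M,\tau)$ and $h_\om^{\frac12}x^-h_\om^{\frac12}\in L^1(\M,\tau)$.

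With this reduction in hand, the ``only if'' implication and the formula~\eqref{om-tau0} are routine: strong multiplication distributes over strong addition in $\widetilde{\M}$, so $h_\om^{\frac12}xh_\om^{\frac12}=h_\om^{\frac12}x^+h_\om^{\frac12}-h_\om^{\frac12}x^-h_\om^{\frac12}$; if the two terms on the right lie in $L^1(\M,\tau)$ so does the difference, and linearity of $\tau$ together with Lemma~\ref{om-tau} for $x^\pm$ gives $\tau(h_\om^{\frac12}xh_\om^{\frac12})=\om(x^+)-\om(x^-)=\om(x)$.

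The crux is the ``if'' direction: from $h_\om^{\frac12}xh_\om^{\frac12}\in L^1(\M,\tau)$ alone one must recover $h_\om^{\frac12}x^+h_\om^{\frac12}\in L^1(\M,\tau)$ and $h_\om^{\frac12}x^-h_\om^{\frac12}\in L^1(\M,\tau)$. The difficulty is that these two positive operators are in general \emph{not} the positive and negative parts of the selfadjoint operator $h_\om^{\frac12}xh_\om^{\frac12}$ (their ranges need not be orthogonal), so one can neither invoke stability of $L^1(\M,\tau)$ under $y\mapsto y^\pm$ nor dominate $h_\om^{\frac12}|x|h_\om^{\frac12}$ by $|h_\om^{\frac12}xh_\om^{\frac12}|$ --- only the opposite inequality is available. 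The route I would try is to exploit measurability of $x$ directly: with $e$ the spectral measure of $x$ and $p_n=e([-n,n])$, one has $p_n\uparrow\1$, $\tau(\1-p_n)<\infty$ for $n$ large, $p_n$ commuting with $x$ (hence with $x^\pm$), and $xp_n=x_{[n]}\in\M$, so that the $h_\om^{\frac12}x^\pm p_nh_\om^{\frac12}$ are positive elements of $L^1(\M,\tau)$ converging in measure to $h_\om^{\frac12}x^\pm h_\om^{\frac12}$; it then suffices to bound $\tau(h_\om^{\frac12}x^+p_nh_\om^{\frac12})$ and $\tau(h_\om^{\frac12}x^-p_nh_\om^{\frac12})$ uniformly in $n$. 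Equivalently, on writing $h_\om^{\frac12}xh_\om^{\frac12}=b^*ub$ with $b=|x|^{\frac12}h_\om^{\frac12}$ and $u=e((0,\infty))-e((-\infty,0))$, one checks $h_\om^{\frac12}x^\pm h_\om^{\frac12}=(e_\pm b)^*(e_\pm b)$ with $e_+=e((0,\infty))$, $e_-=e((-\infty,0))$, so the statement reduces to deriving $b\in L^2(\M,\tau)$ from $b^*ub\in L^1(\M,\tau)$. Producing that uniform bound from the mere $L^1$-membership of $h_\om^{\frac12}xh_\om^{\frac12}$ is, I expect, the genuine obstacle, and the point where measurability of $x$ (and, in the intended application, possibly the running hypothesis $\s^\M(\om)\le\s^\M(\f)$) must enter in an essential way.
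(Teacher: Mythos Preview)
Your reduction to the positive case via the Jordan decomposition $x=x^+-x^-$ and the appeal to Lemma~\ref{om-tau} for each of $x^\pm$ is exactly what the paper does. In particular, your treatment of the implication ``$\om(x)$ finite $\Rightarrow h_\om^{1/2}xh_\om^{1/2}\in L^1(\M,\tau)$'' and of the formula~\eqref{om-tau0} coincides with the paper's argument essentially line for line.

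Where you diverge from the paper is that you are \emph{more} scrupulous about the converse. The paper's proof establishes only the chain
\[
 \om(x)\text{ finite}\ \Longleftrightarrow\ \big(h_\om^{1/2}x^+h_\om^{1/2}\in L^1\ \text{and}\ h_\om^{1/2}x^-h_\om^{1/2}\in L^1\big)\ \Longrightarrow\ h_\om^{1/2}xh_\om^{1/2}\in L^1,
\]
and then stops; it does not supply any argument that $h_\om^{1/2}xh_\om^{1/2}\in L^1(\M,\tau)$ forces the two summands $h_\om^{1/2}x^\pm h_\om^{1/2}$ back into $L^1(\M,\tau)$. The difficulty you isolate --- that $h_\om^{1/2}x^\pm h_\om^{1/2}$ are in general \emph{not} the Jordan parts of $h_\om^{1/2}xh_\om^{1/2}$, so one cannot simply read off $h_\om^{1/2}|x|h_\om^{1/2}\in L^1$ from $|h_\om^{1/2}xh_\om^{1/2}|\in L^1$ --- is genuine and is simply not addressed in the paper's proof. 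Your reformulation as ``$b^*ub\in L^1\Rightarrow b\in L^2$'' with $b=|x|^{1/2}h_\om^{1/2}$ and $u$ a partial symmetry makes the obstruction transparent, and your instinct that this step requires a real additional argument (or an additional hypothesis) is correct. In short: your proposal matches the paper on everything the paper actually proves, and you have put your finger on a point the paper glosses over.
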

\begin{proof}
We have
\[
 \om(x)=\om(x^+)-\om(x^-)=\tau\big(h_\om^{\frac{1}{2}}x^+h_\om^{\frac{1}{2}}\big)-\tau\big(h_\om^{\frac{1}{2}}x^-h_\om^{\frac{1}{2}}\big),
\]
and the finiteness of $\om(x)$ is equivalent to the finiteness of $\om(x^+)$ and $\om(x^-)$ which in turn is equivalent to the finiteness of $\tau\big(h_\om^{\frac{1}{2}}x^+h_\om^{\frac{1}{2}}\big)$ and the finiteness of $\tau\big(h_\om^{\frac{1}{2}}x^-h_\om^{\frac{1}{2}}\big)$, i.e. $h_\om^{\frac{1}{2}}x^+h_\om^{\frac{1}{2}}\in L^1(\M,\tau)$ and\\ $h_\om^{\frac{1}{2}}x^-h_\om^{\frac{1}{2}}\in L^1(\M,\tau)$, so
\[
 h_\om^{\frac{1}{2}}xh_\om^{\frac{1}{2}}=h_\om^{\frac{1}{2}}x^+h_\om^{\frac{1}{2}}-h_\om^{\frac{1}{2}}x^-h_\om^{\frac{1}{2}}\in L^1(\M,\tau),
\]
and obviously the formula \eqref{om-tau0} follows.
\end{proof}
We have also the following relation.
\begin{lemma}
Let $x$ be selfadjoint and measurable, and assume that\\ $h_\om x\in L^1(\M,\tau)$. Then
\[
 \om(x)=\tau(h_\om x).
\]
\end{lemma}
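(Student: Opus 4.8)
The plan is to reduce to the case $x\geq0$ and then exhaust $x$ by bounded truncations, transferring the identity $\tau\big(h_\om^{\frac12}x_{[n]}h_\om^{\frac12}\big)=\om\big(x_{[n]}\big)$ (available for bounded $x_{[n]}$) to the limit on both sides. First, for an arbitrary selfadjoint measurable $x$ with spectral measure $e$ and Jordan decomposition $x=x^+-x^-$, note that $x^+=xe([0,+\infty))$ and $x^-=-xe((-\infty,0))$, so that each $h_\om x^\pm=(h_\om x)\,e(\cdot)$ is a product of an element of $L^1(\M,\tau)$ with a projection in $\M$, hence again lies in $L^1(\M,\tau)$. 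Consequently it suffices to treat $x\geq0$: once $\om(x^\pm)=\tau(h_\om x^\pm)<+\infty$ is established, $\om(x)=\om(x^+)-\om(x^-)$ is well defined and equals $\tau(h_\om x^+)-\tau(h_\om x^-)=\tau(h_\om x)$ by linearity of $\tau$ on $L^1(\M,\tau)$.

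So assume $x=\int_0^\infty\lambda\,e(d\lambda)\geq0$ and $h_\om x\in L^1(\M,\tau)$, and set $x_{[n]}=\int_0^n\lambda\,e(d\lambda)=xe([0,n])\in\M$. On the one hand, since $x_{[n]}$ is bounded we have $h_\om^{\frac12}\in L^2(\M,\tau)$ and $x_{[n]}h_\om^{\frac12}\in L^2(\M,\tau)$, so Theorem \ref{tau} gives $\tau(h_\om x_{[n]})=\tau\big(h_\om^{\frac12}x_{[n]}h_\om^{\frac12}\big)$, and the latter equals $\om(x_{[n]})$ by Lemma \ref{om-tau} (equivalently, by the defining formula of the density for bounded arguments). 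On the other hand, $\om(x_{[n]})=\int_0^n\lambda\,\om(e(d\lambda))$ increases to $\int_0^\infty\lambda\,\om(e(d\lambda))=\om(x)$ by the definition \eqref{om}. Thus the whole lemma (in the case $x\geq0$) reduces to showing $\tau(h_\om x_{[n]})\to\tau(h_\om x)$.

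This convergence is the crux. Writing $a=h_\om x\in L^1(\M,\tau)$, we have $h_\om x-h_\om x_{[n]}=a\big(\1-e([0,n])\big)$; moreover $x\,\s(x)=x$ forces $a\,\s(x)=a$ (hence also $|a|\,\s(x)=|a|$), while $e([0,n])\uparrow\s(x)$ strongly, so that $\s(x)-e([0,n])\downarrow0$ strongly. Then $\|a(\1-e([0,n]))\|_1\to0$ follows from the general fact that, for $a\in L^1(\M,\tau)$ and projections $p_n\uparrow p$ strongly with $ap=a$, one has $ap_n\to a$ in $\|\cdot\|_1$: via the polar decomposition $a=v|a|$ and Hölder's inequality one estimates
\[
 \|a(\1-p_n)\|_1\leq\||a|(\1-p_n)\|_1=\||a|(\s(x)-p_n)\|_1\leq\||a|^{\frac12}\|_2\,\||a|^{\frac12}(\s(x)-p_n)\|_2,
\]
and $\||a|^{\frac12}(\s(x)-p_n)\|_2^2=\tau\big(|a|^{\frac12}(\s(x)-p_n)|a|^{\frac12}\big)\to0$ because $\s(x)-p_n\downarrow0$ strongly and $\tau$ is normal (the terms being dominated by $\tau(|a|)<\infty$). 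Granting this, $|\tau(h_\om x)-\tau(h_\om x_{[n]})|\leq\|a(\1-e([0,n]))\|_1\to0$, whence $\om(x)=\lim_n\om(x_{[n]})=\lim_n\tau(h_\om x_{[n]})=\tau(h_\om x)$; in particular $\om(x)<+\infty$. Combined with the reduction of the first paragraph, this proves the lemma. The main obstacle is exactly this $\|\cdot\|_1$-continuity of right multiplication by the increasing spectral projections $e([0,n])$; the rest is bookkeeping with Theorem \ref{tau}, Lemma \ref{om-tau}, and the definition \eqref{om}.
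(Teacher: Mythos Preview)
Your proof is correct, but it takes a longer path than the paper's. The paper handles the key convergence $\tau(h_\om x_{[n]})\to\tau(h_\om x)$ in one stroke, without reducing to $x\geq0$ and without any $\|\cdot\|_1$-estimate: since $h_\om x\in L^1(\M,\tau)$ it is the density of some $\ro\in\M_*$, and normality of $\ro$ gives immediately $\tau(h_\om x_{[n]})=\tau((h_\om x)\,p_n)=\ro(p_n)\to\ro(\1)=\tau(h_\om x)$ as $p_n=e([-n,n])\uparrow\1$. You instead establish the stronger statement $\|h_\om x-h_\om x_{[n]}\|_1\to0$ via a H\"older splitting of $|a|$, which works but is more laborious. (A small slip: for $x\geq0$ one has $e([0,n])\uparrow\1$, not $\uparrow\s(x)$; what you actually use is $\1-e([0,n])=e((n,\infty))\downarrow0$, and since $|a|\,e(\{0\})=0$ your subsequent formulas are unaffected.) On the other hand, your preliminary reduction to $x\geq0$ via $h_\om x^\pm=(h_\om x)\,e(\cdot)\in L^1(\M,\tau)$ does buy something: it makes the final identification $\lim_n\om(x_{[n]})=\om(x)$ transparent by monotone convergence and simultaneously shows that $\om(x)$ is well defined and finite, a point the paper's two-sided truncation leaves implicit.
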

\begin{proof}
For the spectral decomposition
\[
 x=\int_{-\infty}^\infty\lambda\,e(d\lambda),
\]
put
\[
 p_n=e([-n,n])\uparrow\1,
\]
and let, as before, $x_{[n]}$ be the truncation
\[
 x_{[n]}=\int_{-n}^n\lambda\,e(d\lambda).
\]
Let $\ro\in\M_*$ have the density $h_\om x$. Then
\begin{align*}
 \tau(h_\om x)&=\ro(\1)=\lim_{n\to\infty}\ro(p_n)=\lim_{n\to\infty}\tau(h_\om xp_n)\\
 &=\lim_{n\to\infty}\tau(h_\om x_{[n]})=\lim_{n\to\infty}\om(x_{[n]})=\om(x). \qedhere
\end{align*}
\end{proof}
Furthermore, the following representation of $\om(x)$ holds true.
\begin{lemma}\label{om1}
Let $x$ be a selfadjoint operator affiliated with $\M$. Then
\[
 \om(x)=\langle\Lambda\big(h_\om^{\frac{1}{2}}\big)|\pi(x)\Lambda\big(h_\om^{\frac{1}{2}}\big)\rangle_\Ha
 =\langle\Lambda\big(h_\om^{\frac{1}{2}}\big)|\pi'(x)\Lambda\big(h_\om^{\frac{1}{2}}\big)\rangle_\Ha.
\]
\end{lemma}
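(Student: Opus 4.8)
The plan is to first verify the identity for $x\ge 0$, where it reduces to recognizing a single scalar spectral measure, and then to pass to the general case by means of the Jordan decomposition.

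So let $x=\int_0^\infty\lambda\,e(d\lambda)$ be the spectral decomposition of a positive selfadjoint operator affiliated with $\M$. By the construction of Section~\ref{rep}, $\pi(e(\cdot))$ and $\pi'(e(\cdot))$ are spectral measures in $\Ha$ and $\pi(x)=\int_0^\infty\lambda\,\pi(e(d\lambda))$, $\pi'(x)=\int_0^\infty\lambda\,\pi'(e(d\lambda))$. For a Borel set $E\subset[0,\infty)$ the projection $e(E)$ lies in $\M$, so that $\pi(e(E))\Lambda\big(h_\om^{\frac12}\big)=\Lambda\big(e(E)h_\om^{\frac12}\big)$ and $\pi'(e(E))\Lambda\big(h_\om^{\frac12}\big)=\Lambda\big(h_\om^{\frac12}e(E)\big)$; using that $h_\om^{\frac12}$ and $e(E)h_\om^{\frac12}$ belong to $L^2(\M,\tau)$ together with the cyclicity of $\tau$ on $L^2(\M,\tau)$, I would compute
\[
 \|\pi(e(E))\Lambda\big(h_\om^{\frac12}\big)\|_\Ha^2=\tau\big(h_\om^{\frac12}e(E)h_\om^{\frac12}\big)=\tau(e(E)h_\om)=\om(e(E)),
\]
and likewise
\[
 \|\pi'(e(E))\Lambda\big(h_\om^{\frac12}\big)\|_\Ha^2=\tau\big(e(E)h_\om e(E)\big)=\tau(e(E)h_\om)=\om(e(E)).
\]
Hence the two positive measures $E\mapsto\langle\Lambda\big(h_\om^{\frac12}\big)|\pi(e(E))\Lambda\big(h_\om^{\frac12}\big)\rangle_\Ha$ and $E\mapsto\langle\Lambda\big(h_\om^{\frac12}\big)|\pi'(e(E))\Lambda\big(h_\om^{\frac12}\big)\rangle_\Ha$ both coincide with the measure $E\mapsto\om(e(E))$. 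Integrating $\lambda$ against them and recalling the standard quadratic-form identity for a positive selfadjoint operator, read in $[0,+\infty]$ exactly as the relative entropy is read in Section~\ref{ss1}, namely $\langle\Lambda\big(h_\om^{\frac12}\big)|\pi(x)\Lambda\big(h_\om^{\frac12}\big)\rangle_\Ha=\int_0^\infty\lambda\,\|\pi(e(d\lambda))\Lambda\big(h_\om^{\frac12}\big)\|_\Ha^2$, I obtain
\[
 \langle\Lambda\big(h_\om^{\frac12}\big)|\pi(x)\Lambda\big(h_\om^{\frac12}\big)\rangle_\Ha=\int_0^\infty\lambda\,\om(e(d\lambda))=\om(x)
\]
by the definition~\eqref{om}, and the identical computation works for $\pi'$.

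For a general selfadjoint $x$ affiliated with $\M$, I would invoke the Jordan decomposition $x=x^+-x^-$. Since $\pi(e(\cdot))$ is the spectral measure of $\pi(x)$, the positive and negative parts of $\pi(x)$ are $\pi(x^+)$ and $\pi(x^-)$, and analogously for $\pi'$; the positive case just treated then gives $\langle\Lambda\big(h_\om^{\frac12}\big)|\pi(x^\pm)\Lambda\big(h_\om^{\frac12}\big)\rangle_\Ha=\om(x^\pm)$. Consequently the condition $\min\{\om(x^+),\om(x^-)\}<\infty$ under which $\om(x)=\om(x^+)-\om(x^-)$ is defined is precisely the condition under which $\langle\Lambda\big(h_\om^{\frac12}\big)|\pi(x)\Lambda\big(h_\om^{\frac12}\big)\rangle_\Ha$ is meaningful, and the two are equal; the argument for $\pi'$ is the same.

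All of this consists of routine Hilbert-space and trace manipulations; the only points demanding a moment of care are the consistent $[0,+\infty]$-valued reading of the spectral integrals, so that the asserted identity matches the definition~\eqref{om}, and the observation that forming $\pi(\cdot)$ or $\pi'(\cdot)$ of a selfadjoint affiliated operator commutes with taking positive and negative parts, which is immediate from the construction in Section~\ref{rep}. I do not anticipate a genuine obstacle.
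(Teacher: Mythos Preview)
Your proof is correct and follows essentially the same approach as the paper: handle the positive case first by identifying the scalar spectral measure $E\mapsto\om(e(E))$ with $E\mapsto\|\pi(e(E))\Lambda(h_\om^{1/2})\|_\Ha^2$, then pass to general selfadjoint $x$ via the Jordan decomposition and the identity $\pi(x^\pm)=\pi(x)^\pm$. The only cosmetic difference is that the paper phrases the positive case through truncations $x_{[n]}$ and the limit $\om(x_{[n]})\to\om(x)$, whereas you integrate directly against the identified spectral measure; these are two ways of writing the same monotone-convergence computation.
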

\begin{proof}
Assume first that $x\geq0$. Since truncation is a Borel function, we have for the representation $\pi$
\[
 \pi(x_{[n]})=\pi(x)_{[n]},
\]
and the formulae \eqref{f,om} yield
\begin{align*}
 \om(x)&=\lim_{n\to\infty}\om(x_{[n]})=\lim_{n\to\infty}\langle\Lambda\big(h_\om^{\frac{1}{2}}\big)|\pi(x_{[n]})\Lambda\big(h_\om^{\frac{1}{2}}\big)\rangle_\Ha\\
 &=\lim_{n\to\infty}\langle\Lambda\big(h_\om^{\frac{1}{2}}\big)|\pi(x)_{[n]}\Lambda\big(h_\om^{\frac{1}{2}}\big)\rangle_\Ha
 =\langle\Lambda\big(h_\om^{\frac{1}{2}}\big)|\pi(x)\Lambda\big(h_\om^{\frac{1}{2}}\big)\rangle_\Ha.
\end{align*}
For arbitrary selfadjoint $x$, we obtain the result by the decomposition $x=x^+-x^-$ and the formula $\pi(x^\pm)=\pi(x)^\pm$.

The result for the antirepresentation $\pi'$ is obtained in the same way taking into account the formulae \eqref{f,om1}.
\end{proof}
Coming back to the definition of $I(\om,\f)$, observe that an attempt to define it by the formula
\begin{align*}
 I(\om,\f)&=\tau(h_\om\log h_\om-h_\om\log h_\f)\\
 &=\tau(h_\om(\log h_\om-\log h_\f))=\om(\log h_\om-\log h_\f)
\end{align*}
fails because the operator $\log h_\om-\log h_\f$ need not be either densely defined nor affiliated with $\M$. However, we can, in accordance with our previous considerations, define the information by the formula
\begin{equation}\label{inf}
 \begin{aligned}
  I(\om,\f)&=`\tau(h_\om\log h_\om)-\tau(h_\om\log h_\f)'\\
  &=\om(\log h_\om)-\om(\log h_\f)\\
  &=\tau\big(h_\om^{\frac{1}{2}}(\log h_\om)h_\om^{\frac{1}{2}}\big)-\tau\big(h_\om^{\frac{1}{2}}(\log h_\f)h_\om^{\frac{1}{2}}\big),
 \end{aligned}
\end{equation}
under the condition that the right hand side of the above formula is well-defined. As it was remarked earlier, in the finite dimensional case we have
\[
 S(\f,\om)=I(\om,\f).
\]
Our aim is to obtain this equality for an arbitrary semifinite von Neumann algebra.
\begin{theorem}
Let $\f$ and $\om$ be normal states on a semifinite von Neumann algebra $\M$ with a normal semifinite faithful trace $\tau$, and let $\s^\M(\om)\leq\s^\M(\f)$.
Assume that either the Segal entropy $H(\om)$ of $\om$ is finite or that $\om(\log h_\f)$ is finite. Then
\begin{equation}\label{S=I}
 S(\f,\om)=I(\om,\f).
\end{equation}
\end{theorem}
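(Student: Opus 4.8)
The plan is to realize both sides of \eqref{S=I} as one and the same integral against the finite positive Borel measure $\nu$ on $\mathbb{R}$ given by $\nu(E)=\|m(E)\Lambda(h_\om^{\frac12})\|_\Ha^2$, where $m$ is the common spectral measure of the commuting positive operators $\pi(h_\f^{\frac12})$ and $\pi'(\widetilde{h_\om}^{\frac12})$ from formula (11'). For the left-hand side this has essentially been done already: combining (11'), (12') with the definition of $S(\f,\om)$ gives formula \eqref{S}, i.e.
\[
 S(\f,\om)=-2\int_{-\infty}^\infty\log\big(u(\lambda)v(\lambda)\big)\,\nu(d\lambda).
\]
Thus it suffices to prove the matching representation $I(\om,\f)=-2\int_{-\infty}^\infty\log(u(\lambda)v(\lambda))\,\nu(d\lambda)$, after which \eqref{S=I} is immediate.

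First I would rewrite the two summands of $I(\om,\f)=\om(\log h_\om)-\om(\log h_\f)$. By Lemma \ref{om1}, applied to the selfadjoint operators $\log h_\f$ and $\log h_\om$ affiliated with $\M$ (extended by $0$ off their supports, which is harmless since $\s^\M(\om)\leq\s^\M(\f)$),
\[
 \om(\log h_\f)=\big\langle\Lambda(h_\om^{\frac12})\big|\pi(\log h_\f)\,\Lambda(h_\om^{\frac12})\big\rangle_\Ha,\qquad \om(\log h_\om)=\big\langle\Lambda(h_\om^{\frac12})\big|\pi'(\log h_\om)\,\Lambda(h_\om^{\frac12})\big\rangle_\Ha.
\]
By the functional-calculus relation \eqref{pi1} and the representation (11') of $\pi(h_\f^{\frac12})$ one has $\pi(h_\f)=\pi(h_\f^{\frac12})^2=\int u^2\,dm$, hence $\pi(\log h_\f)=\log\pi(h_\f)=2\int\log u\,dm$, so that $\om(\log h_\f)=2\int_{-\infty}^\infty\log u(\lambda)\,\nu(d\lambda)$. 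For the other summand, recall from \eqref{h_om} that $\widetilde{h_\om}=w(h_\om)$ with $w(\lambda)=1/\lambda$ for $\lambda\ne0$, so that $\s(\widetilde{h_\om})=\s(h_\om)$ and, by \eqref{pi-1}, on $\Ha_0=\s^{\pi'(\M)}(\pi'(h_\om))\Ha$ the operators $\pi'(h_\om^{\frac12})|\Ha_0$ and $\pi'(\widetilde{h_\om}^{\frac12})|\Ha_0$ are mutually inverse; hence $\pi'(\log h_\om)|\Ha_0=-\pi'(\log\widetilde{h_\om})|\Ha_0$. Since $\Lambda(h_\om^{\frac12})\in\Ha_0$ by \eqref{s}, and since \eqref{pi'1} together with the representation (11') of $\pi'(\widetilde{h_\om}^{\frac12})$ gives $\pi'(\log\widetilde{h_\om})=2\int\log v\,dm$, we get $\om(\log h_\om)=-2\int_{-\infty}^\infty\log v(\lambda)\,\nu(d\lambda)$. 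Subtracting,
\[
 I(\om,\f)=-2\int_{-\infty}^\infty\big(\log u(\lambda)+\log v(\lambda)\big)\,\nu(d\lambda)=-2\int_{-\infty}^\infty\log\big(u(\lambda)v(\lambda)\big)\,\nu(d\lambda)=S(\f,\om),
\]
which is \eqref{S=I}.

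The part that is not routine is making these steps legitimate when the integrals take infinite values, and this is exactly where the standing hypotheses are used. One first needs $\log u$ and $\log v$ to be $\nu$-a.e.\ finite: the spectral projection of $\pi(h_\f^{\frac12})$ at $0$ equals $m(\{u=0\})=\1-\pi(\s^\M(\f))$, and since $\s^\M(\om)\leq\s^\M(\f)$ one has $\pi(\s^\M(\f))\Lambda(h_\om^{\frac12})=\Lambda(\s^\M(\f)h_\om^{\frac12})=\Lambda(h_\om^{\frac12})$, so $\nu(\{u=0\})=0$; similarly $m(\{v=0\})=\1-\s^{\pi'(\M)}(\pi'(h_\om))$ and $\Lambda(h_\om^{\frac12})\in\Ha_0$ give $\nu(\{v=0\})=0$. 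Next, one must avoid an $\infty-\infty$ in $\om(\log h_\om)-\om(\log h_\f)$ and, correspondingly, in the additive splitting $\int\log(uv)\,d\nu=\int\log u\,d\nu+\int\log v\,d\nu$; but since $h_\om$ and $\log h_\om$ commute we have $\om(\log h_\om)=\tau(h_\om\log h_\om)=H(\om)$, so that if $H(\om)$ is finite then $\int\log v\,d\nu$ is finite, while if $\om(\log h_\f)$ is finite then $\int\log u\,d\nu$ is finite --- under either hypothesis at least one of the two integrals is a real number, so their sum is unambiguously defined in $(-\infty,+\infty]$ and the above additivity holds. Bounding the positive parts $(\log u)^+$, $(\log v)^+$ by small powers $u^{\varepsilon}$, $v^{\varepsilon}$ and invoking Hölder/Chebyshev estimates in the spirit of Lemma \ref{est} and Section \ref{trform} completes the justification. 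I expect this bookkeeping with possibly infinite values, rather than the algebraic identity itself, to be the principal obstacle.
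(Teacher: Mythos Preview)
Your approach is essentially the paper's: express both $S(\f,\om)$ and $I(\om,\f)$ as integrals of $\log u$, $\log v$, $\log(uv)$ against the measure $\nu(\cdot)=\|m(\cdot)\Lambda(h_\om^{1/2})\|_\Ha^2$ via Lemma~\ref{om1} and the functional calculus \eqref{pi1}, \eqref{pi'1}, then match. The computations of $\om(\log h_\f)=2\int\log u\,d\nu$ and $\om(\log h_\om)=-2\int\log v\,d\nu$ are exactly formulae \eqref{entr} and \eqref{log} in the paper.

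The only gap is in your last paragraph. The sentence ``under either hypothesis at least one of the two integrals is a real number, so their sum is unambiguously defined'' does not follow: if $\int\log v\,d\nu$ is finite but both $\int(\log u)^+\,d\nu$ and $\int(\log u)^-\,d\nu$ are infinite, neither $\int\log u\,d\nu$ nor the sum is defined. Your proposed fix --- bounding $(\log u)^+$, $(\log v)^+$ by $u^\varepsilon$, $v^\varepsilon$ and appealing to H\"older --- is not spelled out, and it is not clear that $\int u^\varepsilon\,d\nu=\om(h_\f^{\varepsilon/2})$ is finite in general (the obvious H\"older exponents do not land in $L^1$). The paper closes this gap differently and more simply: it invokes the known fact (from Araki's theory) that the relative entropy $S(\f,\om)$ always exists and lies in $(-\infty,+\infty]$, so that $\int\log(uv)\,d\nu$ is well-defined (finite or $-\infty$). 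Writing $\log u=\log(uv)-\log v$ with $\log v\in L^1(\nu)$ then forces $(\log u)^+\in L^1(\nu)$, so $\int\log u\,d\nu$ is well-defined and the additive splitting is legitimate; the case where $\om(\log h_\f)$ is finite is symmetric. You should replace your power-bound sketch by this appeal to the well-definedness of $S(\f,\om)$.
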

\begin{proof}
Assume first that $H(\om)$ is finite. The relations \eqref{pi-1} and \eqref{pi0} yield the equality
\[
 -\log\pi'\big(h_\om^{\frac{1}{2}}\big)=\log\pi'\Big(\widetilde{h_\om}^{\frac{1}{2}}\Big),
\]
Using Lemma \ref{om1}, property \eqref{pi'1} of the antirepresentation $\pi'$, and the formula (11'), we obtain
\begin{equation}\label{entr}
 \begin{aligned}
  H(\om)&=\tau(h_\om\log h_\om)=\om(\log h_\om)\\
   &=\langle\Lambda\big(h_\om^{\frac{1}{2}}\big)|\pi'\big(\log h_\om\big)\Lambda\big(h_\om^{\frac{1}{2}}\big)\rangle_\Ha\\
   &=\langle\Lambda\big(h_\om^{\frac{1}{2}}\big)|\log\pi'\big(h_\om\big)\Lambda\big(h_\om^{\frac{1}{2}}\big)\rangle_\Ha\\
   &=\langle\Lambda\big(h_\om^{\frac{1}{2}}\big)|2\log\big(\pi'\big(h_\om\big)^{\frac{1}{2}}\big)\Lambda\big(h_\om^{\frac{1}{2}}\big)\rangle_\Ha\\
   &=-2\langle\Lambda\big(h_\om^{\frac{1}{2}}\big)|\big(-\log\pi'\big(h_\om^{\frac{1}{2}}\big)\big)\Lambda\big(h_\om^{\frac{1}{2}}\big)\rangle_\Ha\\
   &=-2\langle\Lambda\big(h_\om^{\frac{1}{2}}\big)|\log\pi'\Big(\widetilde{h_\om}^{\frac{1}{2}}\Big)\Lambda\big(h_\om^{\frac{1}{2}}\big)\rangle_\Ha\\
   &=-2\int_{-\infty}^{\infty}\log v(\lambda)\,\|m(d\lambda)\Lambda\big(h_\om^{\frac{1}{2}}\big)\|_\Ha^2.
 \end{aligned}
\end{equation}
It is known that the relative entropy $S(\f,\om)$ exists and is either finite or equals $+\infty$, so from the formula \eqref{S} it follows that the function $\log(uv)=\log u+\log v$ is either integrable with respect to the measure $\|m(\cdot)\Lambda\big(h_\om^{\frac{1}{2}}\big)\|_\Ha^2$ or its integral equals $-\infty$. Since the function $\log v$ is integrable, which follows from the formula \eqref{entr}, we get that the function $\log u$ is either integrable or its integral equals $-\infty$. Using again Lemma \ref{om1}, and the property \eqref{pi1} of the representation $\pi$, we obtain
\begin{equation}\label{log}
 \begin{aligned}
  \om(\log h_\f)&=2\,\om\big(\log h_\f^{\frac{1}{2}}\big)\\
  &=2\langle\Lambda\big(h_\om^{\frac{1}{2}}\big)|\pi\big(\log h_\f^{\frac{1}{2}}\big)\Lambda\big(h_\om^{\frac{1}{2}}\big)\rangle_{\Ha}\\
  &=2\langle\Lambda\big(h_\om^{\frac{1}{2}}\big)|\log\pi\big(h_\f^{\frac{1}{2}}\big)\Lambda\big(h_\om^{\frac{1}{2}}\big)\rangle_{\Ha}\\
  &=2\int_{-\infty}^{\infty}\log u(\lambda)\,\|m(d\lambda)\Lambda\big(h_\om^{\frac{1}{2}}\big)\|_\Ha^2.
 \end{aligned}
\end{equation}
Consequently, by virtue of the formula \eqref{S}, we get
\begin{equation}\label{=}
 \begin{aligned}
  I(\om,\f)=&\om(\log h_\om)-\om(\log h_\f)\\
  =&-2\int_{-\infty}^{\infty}\log v(\lambda)\,\|m(d\lambda)\Lambda\big(h_\om^{\frac{1}{2}}\big)\|_\Ha^2+\\
  &-2\int_{-\infty}^{\infty}\log u(\lambda)\,\|m(d\lambda)\Lambda\big(h_\om^{\frac{1}{2}}\big)\|_\Ha^2\\
  =&-2\int_{-\infty}^{\infty}\log (u(\lambda)v(\lambda))\,\|m(d\lambda)\Lambda\big(h_\om^{\frac{1}{2}}\big)\|_\Ha^2\\
  &=S(\f,\om),
 \end{aligned}
 \end{equation}
proving the claim.

Now for finite $\om(\log h_\f)$, we proceed in virtually the same way: the equality \eqref{log} yields the integrability of the function $\log u$, so the function $\log v$ is either integrable or its integral equals $-\infty$, and the derivation as in the formula \eqref{=} holds.
\end{proof}
\begin{remark}
In the analysis above, one thing is missing. Namely, we have proved equality between the relative entropy and the information between states under the assumption that either \emph{Segal's entropy of the state $\om$ is finite} or that $\om(\log h_\f)$ \emph{is finite}. However, having only the relative entropy, apparently nothing can be said about the existence of the information. To clarify the situation, it would be interesting either to find an example where the relative entropy exists and the information does not or to prove that the existence of the relative entropy implies the existence of the information.
\end{remark}
To illustrate our considerations, look at the following basic example. Let $\M$ be the full algebra of all bounded linear operators on a separable Hilbert space with the canonical trace $\tr$, and let $\f$ and $\om$ be normal (non-necessarily normalised) states on $\M$. Their densities $h_\f$ and $h_\om$ are positive trace-class operators with spectral decompositions
\[
 h_\f=\sum_{n=1}^\infty\alpha_ne_n, \qquad h_\om=\sum_{n=1}^\infty \beta_nf_n,
\]
where $e_n$ and $f_n$ are finite-dimensional projections such that
\[
 \sum_{n=1}^\infty\alpha_n\tr e_n<\infty, \qquad \sum_{n=1}^\infty\beta_n\tr f_n<\infty.
\]
The space $L^2(\M,\tr)$ consists of the Hilbert-Schmidt operators. We have
\begin{align*}
 \Delta(\f,\om)^{\frac{1}{2}}&=\overline{\pi(h_\f^{\frac{1}{2}})\pi'(h_\om^{-\frac{1}{2}})}\\
 &=\sum_{i,j=1}^\infty\sqrt{\frac{\alpha_i}{\beta_j}}\,\pi(e_i)\pi'(f_j)=\sum_{i,j=1}^\infty\sqrt{\frac{\alpha_i}{\beta_j}}\,m_{ij},
\end{align*}
where
\[
 m_{ij}=\pi(e_i)\pi'(f_j)
\]
are projections. Moreover,
\[
 m_{ij}m_{kr}=\pi(e_i)\pi'(f_j)\pi(e_k)\pi'(f_r)=\pi(e_ie_k)\pi'(f_rf_j)=\delta_{ik}\delta_{jr}m_{ij},
\]
showing that $(m_{ij}:i,j=1,2,\dots)$ is a genuine spectral measure. Thus for the relative modular operator, we obtain the spectral representation
\[
 \Delta(\f,\om)=\sum_{i,j=1}^\infty\frac{\alpha_i}{\beta_j}\,m_{ij}.
\]
Further we have
\begin{align*}
 m_{ij}\Lambda\big(h_\om^{\frac{1}{2}}\big)&=\pi(e_i)\pi'(f_j)\Lambda\big(h_\om^{\frac{1}{2}}\big)=\pi(e_i)\Lambda\big(h_\om^{\frac{1}{2}}f_j\big)\\
 &=\pi(e_i)\Lambda\Big(\sqrt{\beta_j}f_j\Big)=\sqrt{\beta_j}\Lambda(e_if_j),
\end{align*}
which gives
\[
 \|m_{ij}\Lambda\big(h_\om^{\frac{1}{2}}\big)\|_{\Ha}^2=\beta_j\|\Lambda(e_if_j)\|_{\Ha}^2=\beta_j\tr(e_if_j)^*e_if_j=\beta_j\tr e_if_j,
\]
so finally we obtain
\begin{align*}
 S(\f,\om)&=-\langle\Lambda\big(h_\om^{\frac{1}{2}}\big)|\log\Delta(\f,\om)\Lambda\big(h_\om^{\frac{1}{2}}\big)\rangle_{\Ha}\\
 &=-\sum_{i,j=1}^\infty\log\frac{\alpha_i}{\beta_j}\,\|m_{ij}\Lambda\big(h_\om^{\frac{1}{2}}\big)\|_{\Ha}^2\\
 &=-\sum_{i,j=1}^\infty\Big(\log\frac{\alpha_i}{\beta_j}\Big)\,\beta_j\tr e_if_j=\sum_{i,j=1}^\infty\beta_j\log\frac{\beta_j}{\alpha_i}\,\tr e_if_j.
\end{align*}
On the other hand, we have
\[
 h_\om\log h_\om=\sum_{j=1}^\infty\beta_j\log\beta_j\,f_j=\sum_{i,j=1}^\infty\beta_j\log\beta_j\,f_je_i,
\]
and
\[
 h_\om\log h_\f=\sum_{j=1}^\infty\beta_j\,f_j\sum_{i=1}^\infty\log\alpha_i\,e_i=\sum_{i,j=1}^\infty\beta_j\log\alpha_i\,f_je_i,
\]
so assuming for the sake of simplicity that $h_\om\log h_\om$ and $h_\om\log h_\f$ are trace-class (i.e. belong to $L^1(\M,\tr)$), we get
\begin{align*}
 I(\om,\f)&=\tr(h_\om\log h_\om-h_\om\log h_\f)\\
 &=\tr\sum_{i,j=1}^\infty(\beta_j\log\beta_j-\beta_j\log\alpha_i)\,f_je_i
 =\sum_{i,j=1}^\infty\beta_j\log\frac{\beta_j}{\alpha_i}\,\tr f_je_i,
\end{align*}
thus
\[
 S(\f,\om)=I(\om,\f).
\]

\subsection{Quasi-entropies}\label{q-e}
The notion of quasi-entropy was introduced in \cite{OP} and consists in the following. Let $f$ be a continuous function, and let $k\in\M$. A \emph{quasi-entropy} $S_f^k(\f,\om)$ for normal states $\f$, $\om$ on $\M$ is defined as
\[
 S_f^k(\f,\om)=\langle k\xi|f(\Delta(\f,\om))k\xi\rangle,
\]
where $\xi$ is a vector representing the state $\om$. (Note that for $f=-\log$ and $k=\1$, we obtain the Araki relative entropy.) In \cite{OP}, formulae for a number of quasi-entropies are given in finite dimension. It turns out that corresponding formulae can be obtained also in the case of a semifinite von Neumann algebra. First notice that in this case, we have in our fundamental representation
\begin{align*}
 S_f^k(\f,\om)&=\langle\pi(k)\Lambda\big(h_\om^{\frac{1}{2}}\big)|f(\Delta(\f,\om))\pi(k)\Lambda\big(h_\om^{\frac{1}{2}}\big)\rangle_\Ha\\
 &=\langle\Lambda\big(kh_\om^{\frac{1}{2}}\big)|f(\Delta(\f,\om))\Lambda\big(kh_\om^{\frac{1}{2}}\big)\rangle_\Ha.
\end{align*}
We begin with the following counterpart of Lemma \ref{om1}.
\begin{lemma}\label{om11}
Let $x$ be a selfadjoint operator affiliated with $\M$. Then for every $k\in\M$, we have
\[
 (k\om k^*)(x)=\langle\Lambda\big(kh_\om^{\frac{1}{2}}\big)|\pi(x)\Lambda\big(kh_\om^{\frac{1}{2}}\big)\rangle_\Ha,
\]
and
\[
 \ro(x)=\langle\Lambda\big(kh_\om^{\frac{1}{2}}\big)|\pi'(x)\Lambda\big(kh_\om^{\frac{1}{2}}\big)\rangle_\Ha,
\]
where $\ro$ is a normal state on $\M$ with the density $h_\ro=|kh_\om^{\frac{1}{2}}|^2$.
\end{lemma}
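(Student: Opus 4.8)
The plan is to mimic the proof of Lemma \ref{om1}, replacing the representing vector $\Lambda\big(h_\om^{\frac{1}{2}}\big)$ by $\Lambda\big(kh_\om^{\frac{1}{2}}\big)$ throughout. First I would record the preliminary observations: $kh_\om^{\frac{1}{2}}\in L^2(\M,\tau)$ because $k\in\M$ and $h_\om^{\frac{1}{2}}\in L^2(\M,\tau)$, so $\Lambda\big(kh_\om^{\frac{1}{2}}\big)$ is a genuine vector of $\Ha$; moreover $k\om k^*$ is a normal positive functional with density $kh_\om k^*=\big(kh_\om^{\frac{1}{2}}\big)\big(kh_\om^{\frac{1}{2}}\big)^*$, whereas $\ro$ has density $\big|kh_\om^{\frac{1}{2}}\big|^2=\big(kh_\om^{\frac{1}{2}}\big)^*\big(kh_\om^{\frac{1}{2}}\big)=h_\om^{\frac{1}{2}}k^*kh_\om^{\frac{1}{2}}$. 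The asymmetry between the two asserted formulae is exactly the asymmetry between $\pi$ (left multiplication) and $\pi'$ (right multiplication).

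Next I would verify both identities for bounded $x\in\M$. For $\pi$ one computes
\[
 \langle\Lambda\big(kh_\om^{\frac{1}{2}}\big)|\pi(x)\Lambda\big(kh_\om^{\frac{1}{2}}\big)\rangle_\Ha=\tau\big(\big(kh_\om^{\frac{1}{2}}\big)^*x\,kh_\om^{\frac{1}{2}}\big)=\tau\big(x\,kh_\om k^*\big)=(k\om k^*)(x),
\]
the changes of order under the trace being justified since the factors lie in suitable $L^p$-spaces, and for $\pi'$
\[
 \langle\Lambda\big(kh_\om^{\frac{1}{2}}\big)|\pi'(x)\Lambda\big(kh_\om^{\frac{1}{2}}\big)\rangle_\Ha=\tau\big(\big(kh_\om^{\frac{1}{2}}\big)^*\big(kh_\om^{\frac{1}{2}}\big)x\big)=\tau(h_\ro x)=\ro(x).
\]
Then I would pass to a positive selfadjoint $x$ affiliated with $\M$ by truncation. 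With $x_{[n]}$ as in \eqref{trunc}, truncation is a Borel function, so $\pi(x_{[n]})=\pi(x)_{[n]}$ and, by \eqref{pi'1}, $\pi'(x_{[n]})=\pi'(x)_{[n]}$. The definition \eqref{om} applied to the normal states $k\om k^*$ and $\ro$ gives $(k\om k^*)(x)=\lim_n(k\om k^*)(x_{[n]})$ and $\ro(x)=\lim_n\ro(x_{[n]})$ by monotone convergence, while on the Hilbert-space side $\langle\Lambda\big(kh_\om^{\frac{1}{2}}\big)|\pi(x)_{[n]}\Lambda\big(kh_\om^{\frac{1}{2}}\big)\rangle_\Ha$ increases to $\langle\Lambda\big(kh_\om^{\frac{1}{2}}\big)|\pi(x)\Lambda\big(kh_\om^{\frac{1}{2}}\big)\rangle_\Ha$ by monotone convergence for the spectral measure of $\pi(x)$, and similarly for $\pi'$ (both sides possibly $+\infty$). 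Combining this with the bounded case applied to each $x_{[n]}$ gives the two formulae for positive $x$. Finally, for arbitrary selfadjoint $x$ affiliated with $\M$ one writes $x=x^+-x^-$, uses $\pi(x^\pm)=\pi(x)^\pm$ and $\pi'(x^\pm)=\pi'(x)^\pm$ (consequences of \eqref{pi1} and \eqref{pi'1}), and subtracts, which is legitimate whenever one of $(k\om k^*)(x^\pm)$, respectively one of $\ro(x^\pm)$, is finite.

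Since this lemma is essentially Lemma \ref{om1} with a different representing vector, there is no real obstacle; the one point that needs care is the correct bookkeeping of the two densities, i.e.\ that it is $k\om k^*$ (density $kh_\om k^*$) which is attached to $\pi$ and $\ro$ (density $\big|kh_\om^{\frac{1}{2}}\big|^2$) which is attached to $\pi'$. Once the bounded computation above is in place, the truncation argument is a verbatim repetition of the proof of Lemma \ref{om1}.
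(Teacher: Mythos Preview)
Your proposal is correct and follows essentially the same approach as the paper: verify the two identities for bounded $x\in\M$ by direct computation with the trace, then extend to positive and finally to arbitrary selfadjoint $x$ affiliated with $\M$ by truncation and the Jordan decomposition, exactly as in the proof of Lemma~\ref{om1}. The paper's own proof is in fact terser, giving only the bounded computation and then stating that ``the rest of the proof employs truncation and follows the lines of the proof of Lemma~\ref{om1}''; your write-up simply spells this out.
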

\begin{proof}
For $x\in\M$ we have
\begin{align*}
 &\langle\Lambda\big(kh_\om^{\frac{1}{2}}\big)|\pi(x)\Lambda\big(kh_\om^{\frac{1}{2}}\big)\rangle_\Ha
 =\langle\Lambda\big(kh_\om^{\frac{1}{2}}\big)|\Lambda\big(xkh_\om^{\frac{1}{2}}\big)\rangle_\Ha\\
 =&\tau\big(h_\om^{\frac{1}{2}}k^*xkh_\om^{\frac{1}{2}}\big)=\tau\big(h_\om k^*xk\big)=\om(k^*xk)=(k\om k^*)(x),
\end{align*}
and
\begin{align*}
 &\langle\Lambda\big(kh_\om^{\frac{1}{2}}\big)|\pi'(x)\Lambda\big(kh_\om^{\frac{1}{2}}\big)\rangle_\Ha
 =\langle\Lambda\big(kh_\om^{\frac{1}{2}}\big)|\Lambda\big(kh_\om^{\frac{1}{2}}x\big)\rangle_\Ha\\
 =&\tau\big(h_\om^{\frac{1}{2}}k^*kh_\om^{\frac{1}{2}}x\big)=\tau\big(|kh_\om^{\frac{1}{2}}|^2 x\big)=\ro(x).
\end{align*}
The rest of the proof employs truncation and follows the lines of the proof of Lemma \ref{om1}.
\end{proof}
Below we calculate the quasi-entropies as in \cite{OP} in terms of the trace and density operators in a semifinite von Neumann algebra.
\begin{enumerate}
 \item[1.] $f(t)=-\log t$. Then with the state $\ro$ as above
\begin{equation}\label{q-e1}
 \begin{aligned}
  S_f^k(\f,\om)&=\ro(\log h_\om)-(k\om k^*)(\log h_\f)\\
  &=\tau\big(|kh_\om^{\frac{1}{2}}|(\log h_\om)|kh_\om^{\frac{1}{2}}|\big)-\tau\big(h_\om^{\frac{1}{2}}k^*(\log h_\f)kh_\om^{\frac{1}{2}}\big),
 \end{aligned}
\end{equation}
provided that any of the sides in the above equation is well-defined. The derivation of this formula is almost the same as that of the formula \eqref{S=I}. First observe that we have, in the notation as in Section \ref{ss1},
\[
 S_f^k(\f,\om)=-2\int_{-\infty}^\infty\log(u(\lambda)v(\lambda))\,\|m(d\lambda)\Lambda\big(kh_\om^{\frac{1}{2}}\big)\|_{\Ha}^2,
\]
and
\begin{align*}
 (k\om k^*)(\log h_\f)&=\langle\Lambda\big(kh_\om^{\frac{1}{2}}\big)|\pi(\log h_\f)\Lambda\big(kh_\om^{\frac{1}{2}}\big)\rangle_\Ha\\
 &=2\int_{-\infty}^\infty\log u(\lambda)\,\|m(d\lambda)\Lambda\big(kh_\om^{\frac{1}{2}}\big)\|_{\Ha}^2,
\end{align*}
and
\begin{align*}
 \ro(\log h_\om)&=\langle\Lambda\big(kh_\om^{\frac{1}{2}}\big)|\pi'(\log h_\om)\Lambda\big(kh_\om^{\frac{1}{2}}\big)\rangle_\Ha\\
 &=-\langle\Lambda\big(kh_\om^{\frac{1}{2}}\big)|\pi'(\log\widetilde{h_\om})\Lambda\big(kh_\om^{\frac{1}{2}}\big)\rangle_\Ha\\
 &=-2\int_{-\infty}^\infty\log v(\lambda))\,\|m(d\lambda)\Lambda\big(kh_\om^{\frac{1}{2}}\big)\|_{\Ha}^2.
\end{align*}
Assuming, for simplicity, that both terms on the right hand side of the equality \eqref{q-e1} are finite, we get
\begin{align*}
 &\ro(\log h_\om)-(k\om k^*)(\log h_\f)\\
 =&-2\int_{-\infty}^\infty\log v(\lambda))\,\|m(d\lambda)\Lambda\big(kh_\om^{\frac{1}{2}}\big)\|_{\Ha}^2+\\
 \phantom{=}&-2\int_{-\infty}^\infty\log u(\lambda))\,\|m(d\lambda)\Lambda\big(kh_\om^{\frac{1}{2}}\big)\|_{\Ha}^2\\
 =&-2\int_{-\infty}^\infty\log(u(\lambda)v(\lambda))\,\|m(d\lambda)\Lambda\big(kh_\om^{\frac{1}{2}}\big)\|_{\Ha}^2=S_f^k(\f,\om).
\end{align*}
 \item[2.] $f(t)=t^\alpha$, $0<\alpha\leq1$. Then
\begin{equation}\label{q-e2}
 S_f^k(\f,\om)=\tau\big(h_\om^{1-\alpha}k^*h_\f^\alpha k\big).
\end{equation}
For $\alpha\leq\frac{1}{2}$, we have, on account of the formula \eqref{basic1},
\begin{align*}
 S_f^k(\f,\om)&=\langle\Lambda\big(kh_\om^{\frac{1}{2}}\big)|\Delta(\f,\om)^\alpha\Lambda\big(kh_\om^{\frac{1}{2}}\big)\rangle_\Ha\\
 &=\langle\Lambda\big(kh_\om^{\frac{1}{2}}\big)|\Lambda\big(h_\f^\alpha kh_\om^{\frac{1}{2}-\alpha}\big)\rangle_\Ha\\
 &=\tau\big(h_\om^{\frac{1}{2}}k^*h_\f^\alpha kh_\om^{\frac{1}{2}-\alpha}\big)=\tau\big(h_\om^{1-\alpha}k^*h_\f^\alpha k\big).
\end{align*}
For $\alpha>\frac{1}{2}$, we proceed as before
\begin{align*}
 S_f^k(\f,\om)&=\langle\Delta(\f,\om)^{\frac{1}{2}}\Lambda\big(kh_\om^{\frac{1}{2}}\big)|\Delta(\f,\om)^{\alpha-\frac{1}{2}}\Lambda\big(kh_\om^{\frac{1}{2}}\big)\rangle_\Ha\\
 &=\langle\Lambda\big(h_\f^{\frac{1}{2}}k\big)|\Lambda\big(h_\f^{\alpha-\frac{1}{2}}kh_\om^{1-\alpha}\big)\rangle_\Ha\\
 &=\tau\big(k^*h_\f^\alpha kh_\om^{1-\alpha}\big)=\tau\big(h_\om^{1-\alpha}k^*h_\f^\alpha k\big).
\end{align*}
 \item[3.] $f(t)=t\log t$. Let $\ro_1$ and $\ro_2$ be normal states on $\M$ with the densities respectively $h_{\ro_1}=|k^*h_\f^{\frac{1}{2}}|^2$ and $h_{\ro_2}=|h_\f^{\frac{1}{2}}k|^2$. Then
\begin{equation}\label{q-e3}
 \begin{aligned}
  S_f^k(\f,\om)&=\ro_1(\log h_\f)-\ro_2(\log h_\om)\\
  &=\tau\big(|k^*h_\f^{\frac{1}{2}}|(\log h_\f)|k^*h_\f^{\frac{1}{2}}|\big)-\tau\big(|h_\f^{\frac{1}{2}}k|(\log h_\om)|h_\f^{\frac{1}{2}}k|\big),
 \end{aligned}
\end{equation}
provided that any of the sides in the above equation is well-defined.

Observe that for the vector $\Lambda\big(h_\f^{\frac{1}{2}}k\big)$ and $x\in\M$, we have
\begin{align*}
 &\langle\Lambda\big(h_\f^{\frac{1}{2}}k\big)|\pi(x)\Lambda\big(h_\f^{\frac{1}{2}}k\big)\rangle_\Ha
 =\langle\Lambda\big(h_\f^{\frac{1}{2}}k\big)|\Lambda\big(xh_\f^{\frac{1}{2}}k\big)\rangle_\Ha\\
 =&\tau\big(k^*h_\f^{\frac{1}{2}}xh_\f^{\frac{1}{2}}k\big)=\tau\big(h_\f^{\frac{1}{2}}kk^*h_\f^{\frac{1}{2}}x\big)=\tau\big(|k^*h_\f^{\frac{1}{2}}|^2x\big)=\ro_1(x),
\end{align*}
and
\begin{align*}
 &\langle\Lambda\big(h_\f^{\frac{1}{2}}k\big)|\pi'(x)\Lambda\big(h_\f^{\frac{1}{2}}k\big)\rangle_\Ha
 =\langle\Lambda\big(h_\f^{\frac{1}{2}}k\big)|\Lambda\big(h_\f^{\frac{1}{2}}kx\big)\rangle_\Ha\\
 =&\tau\big(k^*h_\f kx\big)=\tau\big(|h_\f^{\frac{1}{2}}k|^2x\big)=\ro_2(x),
\end{align*}
and reasoning as in the proof of Lemma \ref{om1} shows that the formulae above hold for every operator $x$ affiliated with $\M$ such that the right hand sides are well-defined.
Further, we have
\begin{align*}
 S_f^k(\f,\om)&=\langle\Lambda\big(kh_\om^{\frac{1}{2}}\big)|\Delta(\f,\om)\log\Delta(\f,\om)\Lambda\big(kh_\om^{\frac{1}{2}}\big)\rangle_\Ha\\
 &=\langle\Delta(\f,\om)^{\frac{1}{2}}\Lambda\big(kh_\om^{\frac{1}{2}}\big)|(\log\Delta(\f,\om))\Delta(\f,\om)^{\frac{1}{2}}\Lambda\big(kh_\om^{\frac{1}{2}}\big)\rangle_\Ha\\
 &=\langle\Lambda\big(h_\f^{\frac{1}{2}}k\big)|\log\Delta(\f,\om)\Lambda\big(h_\f^{\frac{1}{2}}k\big)\rangle_\Ha,
\end{align*}
and
\begin{align*}
 \ro_1(\log h_\f)&=\langle\Lambda\big(h_\f^{\frac{1}{2}}k\big)|\pi(\log h_\f)\Lambda\big(h_\f^{\frac{1}{2}}k\big)\rangle_\Ha\\
 &=2\int_{-\infty}^\infty\log u(\lambda)\,\|m(d\lambda)\Lambda\big(h_\f^{\frac{1}{2}}k\big)\|_{\Ha}^2,
\end{align*}
\begin{align*}
 \ro_2(\log h_\om)&=\langle\Lambda\big(h_\f^{\frac{1}{2}}k\big)|\pi'(\log h_\om)\Lambda\big(h_\f^{\frac{1}{2}}k\big)\rangle_\Ha\\
 &-\langle\Lambda\big(h_\f^{\frac{1}{2}}k\big)|\pi'(\log\widetilde{h_\om})\Lambda\big(h_\f^{\frac{1}{2}}k\big)\rangle_\Ha\\
 &=-2\int_{-\infty}^\infty\log v(\lambda)\,\|m(d\lambda)\Lambda\big(h_\f^{\frac{1}{2}}k\big)\|_{\Ha}^2.
\end{align*}
Consequently,
\begin{align*}
 \ro_1(\log h_\f)-\ro_2(\log h_\om)&=2\int_{-\infty}^\infty\log(u(\lambda)v(\lambda))\,\|m(d\lambda)\Lambda\big(h_\f^{\frac{1}{2}}k\big)\|_{\Ha}^2\\
 &=\int_{-\infty}^\infty\log(u^2(\lambda)v^2(\lambda))\,\|m(d\lambda)\Lambda\big(h_\f^{\frac{1}{2}}k\big)\|_{\Ha}^2\\
 &=\langle\Lambda\big(h_\f^{\frac{1}{2}}k\big)|\log\Delta(\f,\om)\Lambda\big(h_\f^{\frac{1}{2}}k\big)\rangle_\Ha\\
 &=S_f^k(\f,\om).
\end{align*}
\item[4.] $f(t)=at+b$. Then
\begin{equation}\label{q-e4}
 S_f^k(\f,\om)=a\f(kk^*)+b\om(k^*k).
\end{equation}
Indeed, we have
\begin{align*}
 &S_f^k(\f,\om)=\langle\Lambda\big(kh_\om^{\frac{1}{2}}\big)|(a\Delta(\f,\om)+b)\Lambda\big(kh_\om^{\frac{1}{2}}\big)\rangle_\Ha\\
 =&a\langle\Lambda\big(kh_\om^{\frac{1}{2}}\big)|\Delta(\f,\om)\Lambda\big(kh_\om^{\frac{1}{2}}\big)\rangle_\Ha
 +b\langle\Lambda\big(kh_\om^{\frac{1}{2}}\big)|\Lambda\big(kh_\om^{\frac{1}{2}}\big)\rangle_\Ha\\
 =&a\langle\Delta(\f,\om)^{\frac{1}{2}}\Lambda\big(kh_\om^{\frac{1}{2}}\big)|\Delta(\f,\om)^{\frac{1}{2}}\Lambda\big(kh_\om^{\frac{1}{2}}\big)\rangle_\Ha
 +b\tau\big(h_\om^{\frac{1}{2}}k^*kh_\om^{\frac{1}{2}}\big)\\
 =&a\langle\Lambda\big(h_\f^{\frac{1}{2}}k\big)|\Lambda\big(h_\f^{\frac{1}{2}}k\big)\rangle_\Ha+b\om(k^*k)\\
 =&a\tau(k^*h_\f k)+b\om(k^*k)=a\f(kk^*)+b\om(k^*k).
\end{align*}
\item[5.] \textbf{The skew information.} For $0<p<1$, $\f\in\M_*^+$, and $k\in\M$, the \emph{skew information} $I_p(\f,k)$ in finite dimension is defined as
 \[
  I_p(\f,k)=\f(kk^*)-\tau\big(h_\f^{1-p}k^*h_\f^pk\big).
 \]
Let $f(t)=t-t^p$ for $0<p<1$. Then as in point 2, we obtain
\begin{align*}
 S_f^k(\f,\om)&=\langle\Lambda\big(kh_\om^{\frac{1}{2}}\big)|\Delta(\f,\om)\Lambda\big(kh_\om^{\frac{1}{2}}\big)\rangle_\Ha+\\ &-\langle\Lambda\big(kh_\om^{\frac{1}{2}}\big)|\Delta(\f,\om)^p\Lambda\big(kh_\om^{\frac{1}{2}}\big)\rangle_\Ha\\
 &=\langle\Delta(\f,\om)^{\frac{1}{2}}\Lambda\big(kh_\om^{\frac{1}{2}}\big)|\Delta(\f,\om)^{\frac{1}{2}}\Lambda\big(kh_\om^{\frac{1}{2}}\big)\rangle_\Ha
 -\tau\big(h_\om^{1-p}k^*h_\f^pk\big)\\
 &=\tau(k^*h_\f k)-\tau\big(h_\om^{1-p}k^*h_\f^pk\big)=\f(kk^*)-\tau\big(h_\om^{1-p}k^*h_\f^pk\big),
\end{align*}
and taking $\om=\f$ shows that also in the case of a semifinite von Neumann algebra the skew information may be defined by the formula
\[
 I_p(\f,k)=S_f^k(\f,\f).
\]
\end{enumerate}

\subsection{R\'enyi's relative entropy}
For $\alpha\ne1$, \emph{R\'enyi's relative entropy} $S_\alpha(\om,\f)$ between states $\f$ and $\om$ is defined by the formula
\[
 S_\alpha(\om,\f)=\frac{1}{\alpha-1}\log\langle\xi|\Delta(\f,\om)^{1-\alpha}\xi\rangle,
\]
where $\xi$ is the vector representing the state $\om$,
\[
 \om(x)=\langle\xi|x\xi\rangle, \quad x\in\M.
\]
In the setting of semifinite von Neumann algebras, we have, in our basic representation introduced in Section \ref{rep}, $\xi=\Lambda\big(h_\om^{\frac{1}{2}}\big)$, thus
\begin{equation}\label{R-ent}
 S_\alpha(\om,\f)=\frac{1}{\alpha-1}\log\langle\Lambda\big(h_\om^{\frac{1}{2}}\big)|\Delta(\f,\om)^{1-\alpha}\Lambda\big(h_\om^{\frac{1}{2}}\big)\rangle.
\end{equation}
\begin{theorem}
For every $0\leq\alpha<1$, the following formula holds
\begin{equation}\label{R}
 S_\alpha(\om,\f)=\frac{1}{\alpha-1}\log\tau\big(h_\f^{1-\alpha}h_\om^\alpha\big).
\end{equation}
\end{theorem}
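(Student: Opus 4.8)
The plan is to establish the identity
\[
 \langle\Lambda\big(h_\om^{1/2}\big)|\Delta(\f,\om)^{1-\alpha}\Lambda\big(h_\om^{1/2}\big)\rangle_\Ha=\tau\big(h_\f^{1-\alpha}h_\om^\alpha\big),
\]
from which \eqref{R} follows at once by applying $\frac{1}{\alpha-1}\log$ to both sides, in view of \eqref{R-ent}. To this end I would put $s=\tfrac{1-\alpha}{2}$, so that $0<s\leq\tfrac12$ for every $0\leq\alpha<1$, with $2s=1-\alpha$ and $\tfrac12-s=\tfrac\alpha2$. Since $\Lambda\big(h_\om^{1/2}\big)\in\D\big(\Delta(\f,\om)^{1/2}\big)$ (as observed in the proof of the Proposition giving \eqref{basic}) and the domains of the powers of a positive selfadjoint operator decrease with the exponent on $[0,\infty)$, one has $\Lambda\big(h_\om^{1/2}\big)\in\D\big(\Delta(\f,\om)^{s}\big)$, so the spectral theorem gives
\[
 \langle\Lambda\big(h_\om^{1/2}\big)|\Delta(\f,\om)^{1-\alpha}\Lambda\big(h_\om^{1/2}\big)\rangle_\Ha=\big\|\Delta(\f,\om)^{s}\Lambda\big(h_\om^{1/2}\big)\big\|_\Ha^2.
\]
Applying the formula \eqref{basic} with this $s$ gives $\Delta(\f,\om)^{s}\Lambda\big(h_\om^{1/2}\big)=\Lambda\big(h_\f^{s}h_\om^{\alpha/2}\big)$, where $h_\f^{s}h_\om^{\alpha/2}\in L^2(\M,\tau)$ by H\"older's inequality ($h_\f^{s}\in L^{2/(1-\alpha)}(\M,\tau)$, $h_\om^{\alpha/2}\in L^{2/\alpha}(\M,\tau)$, $\tfrac{1-\alpha}{2}+\tfrac\alpha2=\tfrac12$); hence
\[
 \big\|\Delta(\f,\om)^{s}\Lambda\big(h_\om^{1/2}\big)\big\|_\Ha^2=\tau\big(h_\om^{\alpha/2}h_\f^{2s}h_\om^{\alpha/2}\big)=\tau\big(h_\om^{\alpha/2}h_\f^{1-\alpha}h_\om^{\alpha/2}\big).
\]

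The remaining step is to move the leading factor $h_\om^{\alpha/2}$ around the trace. For $0<\alpha<1$ we have $h_\f^{1-\alpha}h_\om^{\alpha/2}\in L^{2/(2-\alpha)}(\M,\tau)$ and $h_\om^{\alpha/2}\in L^{2/\alpha}(\M,\tau)$ with $\tfrac{2-\alpha}{2}+\tfrac\alpha2=1$, so Theorem \ref{tau} applies and yields $\tau\big(h_\om^{\alpha/2}h_\f^{1-\alpha}h_\om^{\alpha/2}\big)=\tau\big(h_\f^{1-\alpha}h_\om^{\alpha/2}h_\om^{\alpha/2}\big)=\tau\big(h_\f^{1-\alpha}h_\om^\alpha\big)$; for $\alpha=0$, where $h_\om^{\alpha/2}=h_\om^{0}=\s^\M(\om)$ is a bounded projection, the same conclusion $\tau\big(\s^\M(\om)h_\f\s^\M(\om)\big)=\tau\big(h_\f\s^\M(\om)\big)=\tau\big(h_\f^{1-\alpha}h_\om^\alpha\big)$ follows from the elementary commutation property of $\tau$ on $L^1(\M,\tau)$. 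Substituting back into \eqref{R-ent} completes the proof.

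The argument uses no new analytic ingredient: its two nontrivial inputs are the explicit action \eqref{basic} of $\Delta(\f,\om)^{s}$ on $\Lambda\big(h_\om^{1/2}\big)$ and the cyclicity of the trace between conjugate noncommutative $L^p$-spaces, the latter being exactly Theorem \ref{tau} (itself deduced from \eqref{basic}). The one place where care is needed, and where a slip is easy, is the exponent bookkeeping --- verifying at each stage that the operators multiplied or permuted under $\tau$ lie in the appropriate $L^p(\M,\tau)$ with conjugate exponents, so that H\"older's inequality and Theorem \ref{tau} are legitimately invoked.
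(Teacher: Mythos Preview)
Your proof is correct and follows essentially the same route as the paper: both arguments reduce the claim to an application of the formula \eqref{basic} (or its extension \eqref{basic1}) together with the trace cyclicity of Theorem \ref{tau}. The only cosmetic difference is that the paper, via the quasi-entropy computation in point~2 of Section~\ref{q-e}, splits into the cases $1-\alpha\leq\tfrac12$ and $1-\alpha>\tfrac12$, whereas your rewriting $\langle\Lambda(h_\om^{1/2})|\Delta(\f,\om)^{1-\alpha}\Lambda(h_\om^{1/2})\rangle_\Ha=\|\Delta(\f,\om)^{(1-\alpha)/2}\Lambda(h_\om^{1/2})\|_\Ha^2$ handles all $0\leq\alpha<1$ in one stroke.
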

\begin{proof}
First note that the expression \eqref{R-ent} is a little formal because for $\alpha<\frac{1}{2}$, it may happen that $\Lambda\big(h_\om^{\frac{1}{2}}\big)\notin\D(\Delta(\f,\om)^{1-\alpha})$ --- the situation which we already encountered in the definition of Araki's relative entropy. Again it is to be understood as
\begin{align*}
 S_\alpha(\om,\f)&=\frac{1}{\alpha-1}\log\int_0^\infty \lambda^{1-\alpha}\,\langle\Lambda\big(h_\om^{\frac{1}{2}}\big)|e(d\lambda)\Lambda\big(h_\om^{\frac{1}{2}}\big)\rangle_\Ha\\
 &=\frac{1}{\alpha-1}\log\int_0^\infty \lambda^{1-\alpha}\,\|e(d\lambda)\Lambda\big(h_\om^{\frac{1}{2}}\big)\|_\Ha^2
\end{align*}
where
\[
 \Delta(\f,\om)=\int_0^\infty\lambda\,e(d\lambda)
\]
is the spectral decomposition of $\Delta(\f,\om)$. The proof follows from the calculations performed in Section \ref{q-e} point 2, for $k=\1$ and $\alpha$ in place of $1-\alpha$, where the formula
\[
 \langle\Lambda\big(kh_\om^{\frac{1}{2}}\big)|\Delta(\f,\om)^\alpha\Lambda\big(kh_\om^{\frac{1}{2}}\big)\rangle_\Ha=\tau\big(h_\om^{1-\alpha}k^*h_\f^\alpha k\big)
\]
was obtained.
\end{proof}
\begin{remark}
In finite dimension, the formula \eqref{R} has been known for some time. Moreover, it is valid for all $\alpha\ne1$ which is pretty obvious since then there are no restrictions on the domain of the relative modular operator.
\end{remark}


\begin{thebibliography}{99}
 \bibitem{A}
  H. Araki, \emph{Relative entropy of states of von Neumann algebras}, Publ. Res. Inst. Math. Sci. \textbf{11} (1976), 809--833.
 \bibitem{A1}
  H. Araki, \emph{Relative entropy for states of von Neumann algebras II}, Publ. Res. Inst. Math. Sci. \textbf{13} (1977), 173--192.
 \bibitem{B}
  C.J.K. Batty, \emph{The strong law of large numbers for states and traces of a W*-algebra}, Z. Wahrsch. Verw. Geb. \textbf{48} (1979), 177--191.
 \bibitem{BK}
  L.G. Brown, H. Kosaki, \emph{Jensen's inequality in semi-finite von Neumann algebras}, J. Operator Theory \textbf{23} (1990), 3--19.
 \bibitem{GL}
  S. Goldstein, A. \L uczak, \emph{Sample continuity moduli theorem in von Neumann algebras}, Probability Theory on Vector Spaces III, Proceedings, Lublin 1983, Lecture Notes Math. \textbf{1080}, Springer, Berlin, 1984, 61--68.
 \bibitem{GL1}
  S. Goldstein, A. \L uczak, \emph{Continuity of non-commutative stochastic process}, Probab. Math. Stat. \textbf{6}(1) (1985), 83--88.
 \bibitem{Hi}
  F. Hiai, \emph{Concise lectures on selected topics of von Neumann algebras}; preprint, arXiv: 2004.02383.
 \bibitem{JOPS}
  V. Jak\v{s}i\'c, Y. Ogata, C.-A. Pillet, R. Seiringer, \emph{Quantum hypothesis testing and non-equilibrium statistical mechanics}, Rev. Math. Phys. \textbf{24}(6) (2012), 1230002.
 \bibitem{KR}
  R.V. Kadison, J.R. Ringrose, \emph{Fundamentals of the Theory of Operator Algebras II}, Graduate Studies in Mathematics vol. 16, AMS, Providence, 2009.
 \bibitem{L}
  E.C. Lance, \emph{Martingale convergence in von Neumann algebras}, Math. Proc. Cambridge Philos. Soc. \textbf{84} (1978), 47--56.
 \bibitem{Lu}
  A. \L uczak, \emph{Continuity of a quantum stochastic process}, Int. J. Theor. Phys. \textbf{47} (2008), 236--244.
 \bibitem{N}
  E. Nelson, \emph{Notes on non-commutative integration}, J. Funct. Anal. \textbf{15} (1974), 103--116.
 \bibitem{OP}
  M. Ohya, D. Petz, \emph{Quantum Entropy and Its Use}, Springer, Berlin-Heidelberg-New York, 2004.
 \bibitem{S}
  I.E. Segal, \emph{A non-commutative extension of abstract integration}, Ann. of Math. \textbf{57} (1953), 401--457.
 \bibitem{S1}
  I.E. Segal, \emph{A note on the concept of entropy}, J. Math. Mech. \textbf{9}(4) (1960), 623-629.
 \bibitem{Ta1}
  M. Takesaki, \emph{Theory of Operator Algebras I}, Encyclopaedia of Mathematical Sciences vol. 124, Springer, Berlin--Heidelberg--New York, 2002.
 \bibitem{Ta2}
  M. Takesaki, \emph{Theory of Operator Algebras II}, Encyclopaedia of Mathematical Sciences vol. 125, Springer, Berlin--Heidelberg--New York, 2003.
 \bibitem{U}
  H. Umegaki, \emph{Conditional expectation in an operator algebra, IV (Entropy and information)}, K\={o}dai Math. Sem. Rep. \textbf{14} (1962), 59--85.
 \bibitem{Y}
  F.J. Yeadon, \emph{Non-commutative $L_p$-spaces}, Math. Proc. Cambridge Philos. Soc. \textbf{77} (1975), 91--102.
\end{thebibliography}
\end{document}